\newif\ifdissertationkrenn\dissertationkrennfalse
\renewcommand\p@enumii{} \makeatother
\newcommand{\itemref}[1]{(\ref{#1})}
\newcommand{\fracpartLambda}[1]{\ensuremath{\fracpart{#1}_{\Lambda}}}
\newcommand{\diag}[1]{\ensuremath{\f{\operatorname{diag}}{#1}}}
\newcommand{\colbw}{bw}
\begin{document}


\title[Analysis of the Width-$w$ Non-adjacent Form]{Analysis of the Width-$w$
  Non-Adjacent Form in Conjunction with Hyperelliptic Curve Cryptography and
  with Lattices}

\author{Daniel Krenn}

\thanks{The author is supported by the Austrian Science Fund (FWF): S9606,
  that is part of the Austrian National Research Network ``Analytic
  Combinatorics and Probabilistic Number Theory'', and by the Austrian Science
  Fund (FWF): W1230, Doctoral Program
  ``Discrete Mathematics''.}

\address{\parbox{12cm}{%
    Daniel Krenn \\
    Institute of Optimisation and Discrete Mathematics (Math B) \\
    Graz University of Technology \\
    Steyrergasse 30/II, A-8010 Graz, Austria \\}} 

\email{\href{mailto:math@danielkrenn.at}{math@danielkrenn.at} \textit{or}
  \href{mailto:krenn@math.tugraz.at}{krenn@math.tugraz.at}}

\keywords{$\tau$-adic expansions, width-$w$ non-adjacent forms,
  redundant digit sets, hyperelliptic curve cryptography, Koblitz
  curves, Frobenius endomorphism, scalar multiplication, lattices,
  numeral systems, sum of digits}

\subjclass[2010]{11A63; 11H99, 11R21, 28A80, 94A60}


\begin{abstract}


  In this work the number of occurrences of a fixed non-zero digit in the
  width\nbd-$w$ non-adjacent forms of all elements of a lattice in some region
  (e.g.\ a ball) is analysed. As bases, expanding endomorphisms with
  eigenvalues of the same absolute value are allowed. Applications of the main
  result are on numeral systems with an algebraic integer as base. Those come
  from efficient scalar multiplication methods (Frobenius-and-add methods) in
  hyperelliptic curves cryptography, and the result is needed for analysing the
  running time of such algorithms.

  The counting result itself is an asymptotic formula, where its main term
  coincides with the full block length analysis. In its second order term a
  periodic fluctuation is exhibited. The proof follows Delange's method.

\end{abstract}


\maketitle


\section{Introduction}
\label{sec:introduction}


One main operation in hyperelliptic curve cryptography is building (large)
multiples of an element of the Jacobian variety of a hyperelliptic curve over a
finite field. Clearly, we want to perform that scalar multiplication as
efficiently as possible. A standard method there are double-and-add algorithms,
where integers are written in binary, and then a Horner scheme is performed. By
using windowing methods those algorithms can be sped up. The idea is to take a
larger digit set and choose an expansion which has a low number of non-zero
digits. This leads to an efficient evaluation. Some background information on
hyperelliptic curve cryptography can be found for example
in~\cite{Avanzi-Cohen-Doche-Frey:2005:handb-ellip}.

If the hyperelliptic curve is defined over a finite field with $q$ elements and
we are working over an extension (over a field with $q^m$ elements), then one
can use a Frobenius-and-add method instead. There the (expensive) doublings are
replaced by the (cheap) evaluation of the Frobenius endomorphism on the
Jacobian variety: If
\begin{equation*}
  z = \sum_{\ell=0}^{L-1} \xi_\ell \tau^\ell
\end{equation*}
with digits $\xi_\ell$ and where the base~$\tau$ is a zero of the
characteristic polynomial of the Frobenius endomorphism on the Jacobian, then
for an element $Q$ of the Jacobian we can compute $zQ$ by
\begin{equation*}
  zQ = \sum_{\ell=0}^{L-1} \xi_\ell \f{\varphi^\ell}{Q},
\end{equation*}
where $\varphi$ denotes the Frobenius endomorphism. That base~$\tau$ is an
algebraic integer whose conjugates all have the same absolute value, cf.\
Deligne~\cite{Deligne:1974}, Dwork~\cite{Dwork:1960} and
Weil~\cite{Weil:1948:var-ab-et-courbes-alg, Weil:1949,
  Weil:1971:courbes-alg-et-var-ab}, and see Section~\ref{sec:thm-hyperell} for
more details.

So let us consider digit expansions with a base as above. Let $w$ be a
positive integer. Our digit set should consist of $0$ and one representative of
every residue class modulo $\tau^w$ which is not divisible by $\tau$. That
choice of the digit set yields redundancy, i.e., each element of
$\Ztau$ has more than one representation. The width\nbd-$w$ non-adjacent
form, $w$-NAF for short, is a special representation: Every block of $w$
consecutive digits contains at most one non-zero digit. The choice of the digit
set guarantees that the $w$-NAF-expansion is unique. The low weight (number of
non-zero digits) of that expansion makes the arithmetic on the hyperelliptic
curves efficient.

In the case that the base~$\tau$ is an imaginary-quadratic algebraic integer,
properties of such \wNAF{} numeral systems are known: The question whether for
a given digit set each element of $\Ztau$ has a representation as a \wNAF{} is
investigated in Koblitz~\cite{Koblitz:1998:ellip-curve},
Solinas~\cite{Solinas:1997:improved-algorithm,Solinas:2000:effic-koblit},
Blake, Murty and Xu~\cite{Blake-Kumar-Xu:2005:effic-algor,
  Blake-Murty-Xu:ta:nonad-radix, Blake-Murty-Xu:2005:naf}, and Heuberger and
Krenn~\cite{Heuberger-Krenn:2012:wnaf-analysis}. Another question, namely
whether the \wNAF{} is an expansion which minimises the weight among all
possible expansions with the same digit set, is answered in Heuberger and
Krenn~\cite{Heuberger-Krenn:2011:wnafs-optimality}. A generalisation of those
existence and optimality results to higher degree of the base~$\tau$ is given
in Heuberger and Krenn~\cite{Heuberger-Krenn:2013:general-wnaf}. One main step
there was to use the Minkowski map to transform the $\tau$\nbd-adic setting to
a lattice, see also Section~\ref{sec:thm-hyperell}.

The present work deals with analysing the number of occurrences of a digit in
\wNAF{}-expansions with base~$\tau$ (an algebraic integer of degree~$n$) and
where $w$ is chosen sufficiently large. This result is needed for the analysis
of the running time of the scalar multiplication algorithm mentioned at the
beginning of this introduction. As brought up in the previous paragraph, we
will do this analysis in the set-up of numeral systems in lattices, cf.\
Section~\ref{sec:thm-hyperell}. As a base, an expanding endomorphism, whose
eigenvalues all have the same absolute value, is used. Our main result is the
asymptotic formula
\begin{equation*}
  Z_\eta = N^n \tfrac{\pi^{n/2}}{\f{\Gamma}{\frac{n}{2}+1}} E \log N 
  + N^n \f{\psi_\eta}{\log N} + \Oh{N^\beta \log N}.
\end{equation*}
for the number of occurrences of a fixed non-zero digit~$\eta$ in
\wNAF{}-expansions in a ball around $0$ with radius~$N$. The main term of that
formula coincides with the full block length analysis given in Heuberger and
Krenn~\cite{Heuberger-Krenn:2012:wnaf-analysis}. There an explicit expression
for the expectation (the constant~$E$) and the variance of the occurrence of
such a digit in all expansions of a fixed length is given. The result here is
more precise: A periodic fluctuation $\psi_\eta$ in the second order term is
also exhibited. The third term is an error term with $\beta<n$. Such
structures---main term, oscillation term, smaller error term---are not uncommon
in the context of digits counting, see for instance, Heuberger and
Prodinger~\cite{Heuberger-Prodinger:2006:analy-alter} or Grabner, Heuberger and
Prodinger~\cite{Grabner-Heuberger-Prodinger:2004:distr-results-pairs}. The
result itself is a generalisation of the one found in Heuberger and
Krenn~\cite{Heuberger-Krenn:2012:wnaf-analysis}. The proof, as the one
in~\cite{Heuberger-Krenn:2012:wnaf-analysis}, follows Delange's method, cf.\
Delange~\cite{Delange:1975:chiffres}, but several technical problems have to be
taken into account.

The structure of this article is as follows. We start with the formal
definition of numeral systems and the non-adjacent form in
Section~\ref{sec:nafs}. Sections~\ref{sec:set-up}
and~\ref{sec:remarks} contain our primary set-up in a lattice. We will
work in this set-up throughout the entire article. There also the used
digit set, which comes from a tiling by the lattice, is
defined. Additionally, some notations are fixed and some basic
properties are given. The end of Section~\ref{sec:set-up} is devoted
to the full block length analysis theorem given in Heuberger and
Krenn~\cite{Heuberger-Krenn:2012:wnaf-analysis}. In
Sections~\ref{sec:bounds-value} to~\ref{sec:sets-w_eta} a lot of
properties of the investigated expansions, such as bounds of the value
and the behaviour of the fundamental domain and the characteristic
sets, are derived. Those are needed to prove our main result, the
counting theorem in Section~\ref{sec:counting-digits-region}. The last
section will forge a bridge to the $\tau$-adic set-up. This is
explained with details there and the counting theorem is restated in
that set-up.

A last remark on the proofs given in this article. As this work is a
generalisation of Heuberger and
Krenn~\cite{Heuberger-Krenn:2012:wnaf-analysis} several proofs of
propositions and lemmata are skipped. All those are straight-forward
generalisations of the ones for the quadratic case, which means, we
have to do things like replacing $\Ztau$ by the lattice, the
multiplication by $\tau$ by a lattice endomorphism, the dimension~$2$
by~$n$, using a norm instead of the absolute value, and so on. If the
generalisation is not that obvious, the proofs are given.


\section{Non-Adjacent Forms}
\label{sec:nafs}


This section is devoted to the formal introduction of width\nbd-$w$ non-adjacent
forms. Let $\Lambda$ be an Abelian group, $\Phi$ an injective endomorphism of
$\Lambda$ and $w$ a positive integer. Later, starting with the next section,
the group $\Lambda$ will be a lattice with the usual addition of lattice
points.

We start with the definition of the digit set used throughout this article.


\begin{definition}[Reduced Residue Digit Set]\label{def:red-residue-digit-set}
  Let $\cD\subseteq\Lambda$. The set $\cD$ is called a \emph{reduced residue
    digit set modulo $\Phi^w$}, if it is consists of $0$ and exactly one
  representative for each residue class of $\Lambda$ modulo $\Phi^w\Lambda$
  that is not contained in $\Phi\Lambda$.
\end{definition}


Next we define the syntactic condition of our expansions. This syntax is used
to get unique expansions, because our numeral systems are redundant.


\begin{definition}[Width-$w$ Non-Adjacent Forms]\label{def:wnaf}
  Let $\bfeta = \sequence{\eta_j}_{j\in\Z} \in \cD^\Z$. The sequence $\bfeta$ is
  called a \emph{width\nbd-$w$ non-adjacent form}, or
  \emph{\wNAF{}} for short, if each factor $\eta_{j+w-1}\ldots\eta_j$, i.e.,
  each block of width~$w$, contains at most one non-zero digit.

  Let $J\colonequals\set*{j\in\Z}{\eta_j\neq0}$. We call $\f{\sup}{\set{0} \cup
    (J+1)}$, where $J+1=\set*{j+1}{j\in J}$, the \emph{left-length of the
    \wNAF{}~$\bfeta$} and $-\f{\inf}{\set{0} \cup J}$ the \emph{right-length of
    the \wNAF{} $\bfeta$}. Let $\ell$ and $r$ be elements of $\N_0 \cup
  \set{\wNAFsetFIN,\infty}$, where $\wNAFsetFIN$ means finite. We denote the
  \emph{set of all \wNAF{}s of left-length at most $\ell$ and right-length at
    most $r$} by $\wNAFsetellell{\ell}{r}$. The elements of the set
  $\wNAFsetfinell{0}$ will be called \emph{integer \wNAF{}s}. The
  \emph{most-significant digit} of a $\bfeta\in\wNAFsetfininf$ is the digit
  $\eta_j\neq0$, where $j$ is chosen maximally with that property.

  For $\bfeta\in\wNAFsetfininf$ we call
  \begin{equation*}
    \NAFvalue{\bfeta} \colonequals \sum_{j\in\Z} \Phi^j \eta_j 
  \end{equation*}
  the \emph{value of the \wNAF{}~$\bfeta$}.
\end{definition}


The following notations and conventions are used. A block of any number of zero
digits is denoted by $\bfzero$. For a digit $\eta$ and $k\in\N_0$ we will use
\begin{equation*}
  \eta^k\colonequals \underbrace{\eta\ldots\eta}_{k},
\end{equation*}
with the convention $\eta^0 \colonequals \eps$, where $\eps$ denotes the empty
word.  A \wNAF{} $\bfeta = \sequence{\eta_j}_{j\in\Z}$ will be written as
$\bfeta_I\bfldot\bfeta_F$, where $\bfeta_I$ contains the $\eta_j$ with $j\geq0$
and $\bfeta_F$ contains the $\eta_j$ with $j<0$. $\bfeta_I$ is called
\emph{integer part}, $\bfeta_F$ \emph{fractional part}, and the dot is called
\emph{$\Phi$\nbd-point}. Left-leading zeros in $\bfeta_I$ can be skipped,
except $\eta_0$, and right-trailing zeros in $\bfeta_F$ can be skipped as
well. If $\bfeta_F$ is a sequence containing only zeros, the $\Phi$\nbd-point
and this sequence are not drawn.

Further, for a \wNAF{} $\bfeta$ (a bold, usually small Greek letter) we will
always use $\eta_j$ (the same letter, but indexed and not bold) for the
elements of the sequence.


The set $\wNAFsetfininf$ can be equipped with a metric. It is defined in the
following way. Let $\rho>1$. For $\bfeta\in\wNAFsetfininf$ and
$\bfxi\in\wNAFsetfininf$ define
\begin{equation*}
  \NAFd{\bfeta}{\bfxi} \colonequals 
  \begin{cases}
    \rho^{\max\set*{j\in\Z}{\eta_j\neq\xi_j}} 
    & \text{if $\bfeta\neq\bfxi$,} \\
    0 & \text{if $\bfeta=\bfxi$.}
  \end{cases}
\end{equation*}
So the largest index, where the two \wNAF{}s differ, decides their distance. See
for example Edgar~\cite{Edgar:2008:measur} for details on such metrics.

We get a compactness result on the metric space $\wNAFsetellinf{\ell} \subseteq
\wNAFsetfininf$, $\ell\in\N_0$, see the proposition below. The metric space
$\wNAFsetfininf$ is not compact, because if we fix a non-zero digit $\eta$,
then the sequence $\sequence{\eta0^j}_{j\in\N_0}$ has no convergent
subsequence, but all $\eta0^j$ are in the set $\wNAFsetfininf$.


\begin{proposition}\label{pro:naf-compact}
  For every $\ell\geq0$ the metric space
  $\left(\wNAFsetellinf{\ell},\NAFdname\right)$ is compact.
\end{proposition}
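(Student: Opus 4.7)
The plan is to identify $\wNAFsetellinf{\ell}$ with a closed subset of a compact product space. First, I would observe that every $\bfeta\in\wNAFsetellinf{\ell}$ is uniquely determined by its coordinates $\eta_j$ for $j<\ell$, since the left-length bound $\ell$ forces $\eta_j=0$ for all $j\geq\ell$. Since the digit set $\cD$ is finite, I would equip it with the discrete topology, making it compact, and consider the product space $X\colonequals\cD^{\set*{j\in\Z}{j<\ell}}$. By Tychonoff's theorem, $X$ is compact.

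Second, I would verify that the metric topology on $\wNAFsetellinf{\ell}$ coincides with the subspace topology from $X$ under this identification. The open ball of radius $\rho^{k}$ about $\bfeta$ equals $\set*{\bfxi\in X}{\xi_j=\eta_j\text{ for all }j\geq k}$, which is a (tail) cylinder set in $X$. Conversely, each basic cylinder in $X$ fixing values at some coordinates $j_1<\dots<j_m<\ell$ is a finite union of such tail cylinders, since the finitely many coordinates strictly between the fixed ones and $\ell$ range over the finite set $\cD$. Hence the two topologies agree.

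Third, I would show that the $w$-NAF syntactic condition cuts out a closed subset of $X$. A sequence $\bfxi\in X$ fails to be a \wNAF{} precisely when some window $\xi_{j+w-1}\ldots\xi_j$ contains two non-zero digits; this event depends only on $w$ coordinates, so the set of failing sequences is a union of basic cylinders and thus open in $X$. The $w$-NAF set is therefore closed in the compact space $X$ and hence compact itself.

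The main obstacle is essentially bookkeeping rather than conceptual: one must confirm that $\cD$ is finite (which is immediate as soon as $\Lambda/\Phi^w\Lambda$ is finite, as will be the case in the lattice set-up of the following sections) and keep the two-sided index $j\in\Z$ straight against the one-sided bound $\ell$. The underlying picture is the standard one from symbolic dynamics: $\wNAFsetellinf{\ell}$ is realised as a closed subshift of a Cantor-like full shift on the alphabet $\cD$.
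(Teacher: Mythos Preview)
Your proposal is correct and follows exactly the route the paper indicates: the paper's proof consists of the single sentence ``This is a consequence of Tychonoff's Theorem, see~[Heuberger--Krenn 2012] for details,'' and you have supplied precisely those details---embedding $\wNAFsetellinf{\ell}$ into the compact product $\cD^{\set*{j\in\Z}{j<\ell}}$, checking that the metric and product topologies agree, and observing that the \wNAF{} constraint is closed. Your caveat that finiteness of $\cD$ is required (and only established in the lattice set-up of the subsequent section) is apt and worth noting.
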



This is a consequence of Tychonoff's Theorem, see~\cite{Heuberger-Krenn:2012:wnaf-analysis} for details.


\section{The Set-Up and Notations}
\label{sec:set-up}


In this section we describe the set-up, which we use throughout this
article. 


\begin{enumerate}
\item Let $\Lambda$ be a lattice in $\R^n$ with full rank, i.e., $\Lambda =
  w_1\Z \oplus \dots \oplus w_n\Z$ for linearly independent
  $w_1,\dots,w_n\in\R^n$.

\item Let $n\in\N$ and $\Phi$ be an endomorphism of $\R^n$ with
  $\f{\Phi}{\Lambda} \subseteq \Lambda$. We assume that each eigenvalue of
  $\Phi$ has the same absolute value $\rho$, where $\rho$ is a fixed real
  constant with $\rho>1$. Further we assume that $\rho^n\in\N$. Additionally,
  we take this $\rho$ as parameter in the definition of the metric $\NAFdname$.

\item Suppose that the set $T\subseteq\R^n$ tiles the space~$\R^n$ by the
  lattice~$\Lambda$, i.e., the following two properties hold:
  \begin{enumerate}
  \item $\bigcup_{z\in \Lambda}(z+T)=\R^n$,
  \item $T\cap (z+T)\subseteq \boundary*{T}$ holds for all $z\in \Lambda$ with
    $z\neq 0$.
  \end{enumerate}
  Further, we assume that $T$ is closed and that $\lmeas{\boundary*{T}}=0$,
  where $\lambda$ denotes the $n$\nbd-dimensional Lebesgue measure. We set
  $d_\Lambda \colonequals \lmeas{T}$.

\item Let $\norm{\,\cdot\,}$ be a vector norm on $\R^n$ such that for the
  corresponding induced operator norm, also denoted by $\norm{\,\cdot\,}$, the
  equalities $\norm{\Phi}=\rho$ and $\norm{\Phi^{-1}}=\rho^{-1}$ hold.

  For a $z\in\Lambda$ and non-negative $r\in\R$ the \emph{open ball with centre
    $z$ and radius $r$} is denoted by
  \begin{equation*}
    \ball{z}{r} \colonequals \set*{y\in\Lambda}{\norm{z-y} < r}
  \end{equation*}
  and the \emph{closed ball with centre $z$ and radius $r$} by
  \begin{equation*}
    \ball*{z}{r} \colonequals \set*{y\in\Lambda}{\norm{z-y} \leq r}.
  \end{equation*}

\item Let $r$ and $R$ be positive reals with
  \begin{equation}\label{eq:T-bounds}
    \ballc{0}{r} \subseteq T \subseteq \ballc{0}{R}.
  \end{equation}

\item Let $w$ be a positive integer such that
  \begin{equation}\label{eq:existence-condition}
    \frac{R}{r} < \rho^w-1.
  \end{equation}

\item Let $\cD$ be a reduced residue digit set modulo $\Phi^w$, cf.\
  Definition~\ref{def:red-residue-digit-set}, corresponding to
  the tiling~$T$, i.e.\ the digit set $\cD$ fulfils $\cD\subseteq \Phi^w T$. 

  Further, suppose that the cardinality of the digit set $\cD$ is
  \begin{equation*}
    \rho^{n(w-1)}\left(\rho^n-1\right)+1.
  \end{equation*}
\end{enumerate}


We use the following notation concerning our tiling: for a lattice element
$z\in\Lambda$ we set $T_z \colonequals z+T$. Therefore $\bigcup_{z\in \Lambda}
T_z=\R^n$ and $T_y \cap T_z \subseteq \boundary*{T_z}$ for all distinct $y,z\in
\Lambda$.


Next we define a fractional part function in $\R^n$ with respect to the
lattice~$\Lambda$, which should be a generalisation of the usual fractional
part of elements in~$\R$ with respect to the rational integers~$\Z$. Our tiling
$T$ induces such a fractional part.


\begin{definition}[Fractional Part]\label{def:frac-voronoi}
  Let $\wt{T}$ be a tiling arising from $T$ in the following way: Restrict the
  set $\wt{T} \subseteq T$ such that it fulfils $\biguplus_{z\in
    \Lambda}(z+\wt{T})=\R^n$.

  For $z\in\R^n$ with $z=u+v$, where $u\in\Lambda$ and $v\in\wt{T}$ define the
  \emph{fractional part corresponding to the lattice $\Lambda$} by
  $\fracpartLambda{z} \colonequals v$.
\end{definition}

Note that this fractional part depends on the tiling~$T$ (or more precisely, on
the tiling $\wt{T}$). We omit this dependency, since we assume that our tiling
is fixed.


\section{Some Basic Properties and some Remarks}
\label{sec:remarks}


The previous section contained our set-up. Some basic implications of that
set-up are now given in this section. Further we give remarks on the tilings
and on the digit sets used, and there are also comments on the existence of
\wNAF{}-expansions in the lattice.

We start with three remarks on our mapping~$\Phi$.


\begin{remark}
  Since all eigenvalues of $\Phi$ have an absolute value larger than $1$, the
  function $\Phi$ is injective. Note that we already assumed injectivity of the
  endomorphism~$\Phi$ in the basic definitions given in Section~\ref{sec:nafs}.
\end{remark}


\begin{remark}
  We have assumed $\norm{\Phi}=\rho$ and $\norm{\Phi^{-1}}=\rho^{-1}$.
  Therefore, for all $J \in \Z$ the equality $\norm{\Phi^J} = \rho^J$ follows.
\end{remark}


\begin{remark}
  The endomorphism~$\Phi$ is diagonalisable. This follows from the assumptions
  that all eigenvalues have the same absolute value~$\rho$ and the existence of
  a norm with $\norm{\Phi}=\rho$ as described in the in the paragraph below.

  Let $\Phi = Q^{-1} J Q$ be the Jordan decomposition of~$\Phi$ and assume the
  endomorphism~$\Phi$ is not diagonalisable. Then there is a Jordan block of~$J$
  of size at least~$2$. Therefore, by building~$\Phi^m$ for positive
  integers~$m$, we get $m\rho^{m-1}u_m$ with $\abs{u_m}=1$ as a superdiagonal
  entry of~$J^m$. Now choose a normalised vector~$x$ such that $J^mQx$ extracts
  (is equal to) a multiple of the column with that entry. That column has only
  the two entries $m\rho^{m-1}u_mc$ and $\rho^mv_mc$ with $\abs{v_m}=1$ and a
  constant~$c$. Therefore the norm of~$\Phi^mx=Q^{-1}J^mQx$ is bounded from
  below by $m\rho^md$ for an appropriate constant~$d>0$. Choosing~$m$ large
  enough leads to a contradiction, since $\norm{\Phi^mx}\leq \rho^m$.
\end{remark}


One special tiling comes from the Voronoi diagram of the lattice. This is
stated in the remark below.


\begin{remark}
  Let
  \begin{equation*}
    V \colonequals \set*{z\in\R^n}{\forall y\in\Lambda : 
      \norm{z} \leq \norm{z-y}}.
  \end{equation*}
  We call $V$ the \emph{Voronoi cell for $0$} corresponding to the
  lattice~$\Lambda$. Let $u \in \Lambda$. We define the \emph{Voronoi cell for
    $u$} as $V_u \colonequals u + V$.

  Now choosing $T=V$ results in a tiling of the~$\R^n$ by the
  lattice~$\Lambda$.
\end{remark}


In our set-up the digit set corresponds to the tiling. In
Remark~\ref{rem:digitset:tiling} this is explained in more details. The Voronoi
tiling mentioned above gives rise to a special digit set, namely the minimal
norm digit set. There, for each digit a representative of minimal norm is
chosen.


\begin{remark}\label{rem:digitset:tiling}
  The condition $\frac{R}{r} < \rho^w-1$ in the our set-up implies the
  existence of \wNAF{}s: each element of $\Lambda$ has a unique
  \wNAF{}-expansion with the digit set $\cD$. See Heuberger and
  Krenn~\cite{Heuberger-Krenn:2013:general-wnaf} for details. There, numeral
  systems in lattices with \wNAF{}-condition and digit sets coming from tilings
  are explained in detail. Further it is shown that each tiling and positive
  integer~$w$ give rise to a digit set~$\cD$.

  Because $\cD\subseteq \Phi^wT$, we have 
  \begin{equation*}
    \rho^wr \leq \norm{d}\leq \rho^w R
  \end{equation*}
  for each non-zero digit $d\in\cD$.
\end{remark}


Further, we get the following continuity result.


\begin{proposition}\label{pro:value-continuous}
  The value function $\NAFvaluename$ is Lipschitz continuous on
  $\wNAFsetfininf$.
\end{proposition}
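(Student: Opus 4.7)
The plan is to produce an explicit Lipschitz constant by a direct geometric-series bound, exploiting that the coordinates in which two \wNAF{}s agree make no contribution to the difference of their values.

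First I would recall from Remark~\ref{rem:digitset:tiling} that every non-zero digit $d\in\cD$ satisfies $\norm{d}\leq\rho^w R$, so every digit (including~$0$) satisfies the same inequality. Combined with $\norm{\Phi^j}=\rho^j$ (noted just before Remark~\ref{rem:digitset:tiling}), this immediately gives that for any $\bfeta\in\wNAFsetfininf$ the partial tail $\sum_{j\leq 0}\Phi^j\eta_j$ is absolutely convergent, because $\sum_{j\leq 0}\rho^j\cdot\rho^w R$ is a finite geometric sum. Since the integer part contains only finitely many non-zero entries, $\NAFvalue{\bfeta}$ is well defined.

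Next, take two distinct $\bfeta,\bfxi\in\wNAFsetfininf$ and set $J\colonequals\max\set*{j\in\Z}{\eta_j\neq\xi_j}$, so that $\NAFd{\bfeta}{\bfxi}=\rho^J$. All terms with index $j>J$ cancel, hence
\begin{equation*}
  \NAFvalue{\bfeta}-\NAFvalue{\bfxi} \;=\; \sum_{j\leq J}\Phi^j(\eta_j-\xi_j).
\end{equation*}
Using $\norm{\Phi^j}=\rho^j$ and $\norm{\eta_j-\xi_j}\leq 2\rho^w R$, I would bound this by
\begin{equation*}
  \norm{\NAFvalue{\bfeta}-\NAFvalue{\bfxi}} \;\leq\; 2\rho^w R\sum_{j\leq J}\rho^j \;=\; \frac{2\rho^{w+1}R}{\rho-1}\,\rho^J \;=\; C\,\NAFd{\bfeta}{\bfxi},
\end{equation*}
with $C\colonequals\frac{2\rho^{w+1}R}{\rho-1}$, which is finite since $\rho>1$. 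This establishes Lipschitz continuity with constant~$C$.

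There is essentially no obstacle here; the only slightly delicate point is making sure the tail sum over $j\to-\infty$ is legitimately bounded term-by-term by the norm of the digit (which needs the uniform bound from Remark~\ref{rem:digitset:tiling}) rather than attempting to exploit the \wNAF{} syntactic constraint. The \wNAF{} condition could give a smaller constant (at most one non-zero digit per block of length~$w$), but it is not required for the qualitative statement, so I would not bother to optimise.
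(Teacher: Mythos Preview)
Your argument is correct and is exactly the approach the paper indicates: the paper does not spell out a proof here but simply notes that Lipschitz continuity ``is a consequence of the boundedness of the digit set'' and refers to~\cite{Heuberger-Krenn:2012:wnaf-analysis} for the details. Your geometric-series estimate, using $\norm{d}\leq\rho^wR$ from Remark~\ref{rem:digitset:tiling} and $\norm{\Phi^j}=\rho^j$, is precisely that computation made explicit.
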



This result is a consequence of the boundedness of the digit set,
see~\cite{Heuberger-Krenn:2012:wnaf-analysis} for a formal proof.


We need the full block length distribution theorem from Heuberger and
Krenn~\cite{Heuberger-Krenn:2012:wnaf-analysis}. This was proved for numeral
systems with algebraic integer $\tau$ as base. But the result does not depend
on $\tau$ directly, only on the size of the digit set, which depends on the
norm of $\tau$. In our case this norm equals $\rho^n$. That replacement is
already done in the theorem written down below.


\begin{theorem}[Full Block Length Distribution Theorem]
  \label{th:w-naf-distribution}
  Denote the number of \wNAF{}s of length $m\in\N_0$ by $C_m$. We get
  \begin{equation*}
    C_m = \frac{1}{(\rho^n-1)w+1}\rho^{n(m+w)}+\Oh{(\mu\rho^n)^m},
  \end{equation*}
  where $\mu=(1+\frac{1}{\rho^n w^3})^{-1}<1$.

  Further let $0\neq \eta\in\cD$ be a fixed digit and define the random
  variable $X_{m,\eta}$ to be the number of occurrences of the digit $\eta$ in
  a random \wNAF{} of length $m$, where every \wNAF{} of length $m$ is assumed
  to be equally likely. Then we get
  \begin{equation*}
     \expect{X_{m,\eta}} = Em + \Oh{1}
  \end{equation*}
  for the expectation, where
  \begin{equation*}
    E = \frac{1}{\rho^{n(w-1)}((\rho^n-1)w+1)}.    
  \end{equation*}
\end{theorem}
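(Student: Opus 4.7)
My plan is to derive both parts of the theorem from a single linear recurrence for $C_m$. A \wNAF{} $\bfeta$ of length $m$ either begins with $\eta_{m-1}=0$---in which case $\eta_{m-2}\ldots\eta_0$ is an arbitrary \wNAF{} of length $m-1$---or begins with a non-zero digit, in which case the syntactic condition forces $\eta_{m-2}=\cdots=\eta_{m-w}=0$ and $\eta_{m-w-1}\ldots\eta_0$ is an arbitrary \wNAF{} of length $m-w$. For $m\geq w$ this yields
\begin{equation*}
  C_m = C_{m-1} + (|\cD|-1)\,C_{m-w}
      = C_{m-1} + \rho^{n(w-1)}(\rho^n-1)\,C_{m-w}.
\end{equation*}
The associated characteristic polynomial is $p(x)=x^{w}-x^{w-1}-\rho^{n(w-1)}(\rho^n-1)$, and a direct substitution verifies that $x=\rho^n$ is a root. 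Standard residue analysis of the rational generating function $\sum_{m\geq 0}C_m z^m$ at its dominant pole $z=\rho^{-n}$, combined with the explicit initial values of $C_m$ for $0\leq m<w$, then produces the claimed leading coefficient $\frac{1}{(\rho^n-1)w+1}\rho^{n(m+w)}$.

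To control the error term one has to show that every other root of $p$ has modulus at most $\mu\rho^n$ with $\mu=(1+\rho^{-n}w^{-3})^{-1}$. The key inequality is elementary but delicate: on the circle $|x|=\mu\rho^n$ one estimates $|x^{w}-x^{w-1}|$ against $\rho^{n(w-1)}(\rho^n-1)$ and applies Rouch\'e to the quotient $p(x)/(x-\rho^n)$ to conclude that all remaining roots lie strictly inside this circle. This computation is carried out in~\cite{Heuberger-Krenn:2012:wnaf-analysis} for the quadratic case and carries over verbatim, because both the recurrence and $p(x)$ depend on $\tau$ only through $|\cD|$, which in turn depends only on $\rho^n$. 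I expect this quantitative bound on the subdominant roots to be the main technical point, since the specific shape of $\mu$---in particular the factor $w^3$---is what keeps the estimate uniform in $w$ and is ultimately what makes the error term usable in the applications.

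For the expectation I would exploit symmetry: all $|\cD|-1$ non-zero digits play the same role, so $\expect{X_{m,\eta}}=(|\cD|-1)^{-1}S_m/C_m$, where $S_m$ counts the total number of non-zero digits across all \wNAF{}s of length $m$. Introducing the bivariate generating polynomial $C_m(u)$ that marks non-zero positions by $u$ leads to the recurrence $C_m(u)=C_{m-1}(u)+(|\cD|-1)u\,C_{m-w}(u)$; differentiating at $u=1$ yields a linear recurrence for $S_m$ whose generating function has $z=\rho^{-n}$ as a double pole. A second residue computation produces leading behaviour $S_m=(\text{const})\cdot m\rho^{nm}+\Oh{\rho^{nm}}$, and dividing by $C_m$ and by $|\cD|-1=\rho^{n(w-1)}(\rho^n-1)$ delivers exactly the coefficient $E=1/(\rho^{n(w-1)}((\rho^n-1)w+1))$ in front of $m$, with the residual of size $\Oh{1}$.
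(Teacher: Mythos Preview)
The paper does not prove this theorem at all: it simply imports the result from Heuberger and Krenn~\cite{Heuberger-Krenn:2012:wnaf-analysis}, observing that the original proof depends on~$\tau$ only through the cardinality of the digit set, hence only through~$\rho^n$, so that the statement carries over verbatim after the substitution $\abs{\f{\mathrm{norm}}{\tau}}\mapsto\rho^n$. Your proposal makes exactly this observation and, beyond that, sketches the actual argument from the cited source---the linear recurrence $C_m=C_{m-1}+(\card{\cD}-1)C_{m-w}$, the root $x=\rho^n$ of its characteristic polynomial, a Rouch\'e-type localisation of the subdominant roots inside $\abs{x}\leq\mu\rho^n$, and a bivariate generating function differentiated at $u=1$ for the expectation. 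This is correct and is precisely the approach of~\cite{Heuberger-Krenn:2012:wnaf-analysis}; in that sense you have supplied more detail than the present paper does. One small caveat: the specific constant $\mu=(1+\rho^{-n}w^{-3})^{-1}$ is obtained in~\cite{Heuberger-Krenn:2012:wnaf-analysis} not via Rouch\'e on $p(x)/(x-\rho^n)$ but by a direct estimate on the dominant root of an auxiliary recursion, so if you intend to reproduce the bound with that exact $\mu$ you should follow their computation rather than the Rouch\'e outline you describe.
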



The theorem in~\cite{Heuberger-Krenn:2012:wnaf-analysis} gives more details,
which we do not need for the results in this article: We have
\begin{equation*}
  \expect{X_{m,\eta}} = Em  + E_0 + \Oh{m\mu^m}
\end{equation*}
with an explicit constant term $E_0$. Further the variance
\begin{equation*}
  \variance{X_{n,w,\eta}} = Vm + V_0 + \Oh{m^2\mu^m}
\end{equation*}
with explicit constants $V$ and $V_0$ is calculated, and a central limit
theorem is proved.


\section{Bounds for the Value of Non-Adjacent Forms}
\label{sec:bounds-value}


In this section we have a closer look at the value of a \wNAF{}. We want to
find upper bounds, as well as a lower bound for it. In the proofs of all those
bounds we use bounds for the norm $\norm{\,\cdot\,}$. More precisely, geometric
parameters of the tiling $T$, i.e., the already defined reals $r$ and $R$, are
used.

The following proposition deals with three upper bounds, one for
the norm of the value of a \wNAF{}-expansion and two give us bounds in
conjunction with the tiling.


\begin{proposition}[Upper Bounds]\label{pro:upper-bound-fracnafs}
  Let $\bfeta\in\wNAFsetfininf$, and denote the position of the most significant
  digit of $\bfeta$ by $J$. Let
  \begin{equation*}
    B_U = \frac{\rho^wR}{1 - \rho^{-w}}.
  \end{equation*}
  Then the following statements are true:
 
  \begin{enumerate}[(a)]

  \item \label{enu:upper-bound:leq-f} We get
    \begin{equation*}
      \norm{\NAFvalue{\bfeta}} 
      \leq \rho^J B_U.
    \end{equation*}
  
  \item \label{enu:upper-bound:in-balls} We have
    \begin{equation*}
      \NAFvalue{\bfeta} \in 
      \bigcup_{z \in \Phi^{w+J} T} \ball*{z}{\rho^{-w+J} B_U}.
    \end{equation*}

  \item \label{enu:upper-bound:in-v} We get
    \begin{equation*}
      \NAFvalue{\bfeta} \in \Phi^{2w+J} T.
    \end{equation*}

  \item \label{enu:upper-bound:approx-in-v} For each $\ell\in\N_0$, we have
    \begin{equation*}
      \NAFvalue{0\bfldot\eta_{-1}\ldots\eta_{-\ell}} + \Phi^{-\ell}T 
      \subseteq \Phi^{2w-1} T.
    \end{equation*}
  \end{enumerate}
\end{proposition}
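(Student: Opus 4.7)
The plan is to reduce each of the four bounds to a careful application of the triangle inequality, exploiting two recurring ingredients: the digit bound $\norm{d}\leq\rho^{w}R$ for every non-zero $d\in\cD$ from Remark~\ref{rem:digitset:tiling}, and the geometric implication $\norm{y}\leq\rho^{k}r\Rightarrow y\in\Phi^{k}T$, which follows from $\ballc{0}{r}\subseteq T$ together with $\norm{\Phi^{-k}}=\rho^{-k}$. For (a), I would enumerate the positions of the non-zero digits of $\bfeta$ as $J=j_{1}>j_{2}>\cdots$; the \wNAF{} condition forces $j_{k}\leq J-(k-1)w$, and a geometric-series estimate produces the bound $\rho^{J}B_{U}$. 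For (b), splitting off the leading summand $\Phi^{J}\eta_{J}\in\Phi^{J+w}T$ (using $\eta_{J}\in\cD\subseteq\Phi^{w}T$) and applying (a) to the remaining tail, whose most significant position is at most $J-w$, places $\NAFvalue{\bfeta}$ in the closed ball of radius $\rho^{J-w}B_{U}$ around the point $\Phi^{J}\eta_{J}$.

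For (c), rewriting $B_{U}=\rho^{2w}R/(\rho^{w}-1)$ in (a) and invoking the existence condition $R/r<\rho^{w}-1$ gives $\norm{\NAFvalue{\bfeta}}<\rho^{J+2w}r$, after which the geometric implication above yields $\NAFvalue{\bfeta}\in\Phi^{J+2w}T$.

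Part (d) is where I expect the main obstacle. The naive estimate $\rho^{-1}B_{U}+\rho^{-\ell}R$, combining (a) for $v$ with $\norm{t}\leq R$, does not suffice: unwinding the algebra shows that it would force $R/r$ below a quantity strictly smaller than $\rho^{w}-1$, which need not hold. The remedy is to exploit the finite depth $\ell$: a depth\nobreakdash-$\ell$ fractional \wNAF{} carries at most $m\leq\lceil\ell/w\rceil$ non-zero digits, at positions $-j_{1},\dots,-j_{m}$ with $j_{k}\geq 1+(k-1)w$, which sharpens (a) to $\norm{v}\leq\rho^{w-1}R(1-\rho^{-mw})/(1-\rho^{-w})$. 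The decisive observation is then $mw\leq\ell+w-1$ (equivalent to $\lceil\ell/w\rceil\leq(\ell+w-1)/w$), yielding $\rho^{w-1-mw}\geq\rho^{-\ell}$; this is precisely what is needed to absorb the extra $+\rho^{-\ell}R$ coming from $\Phi^{-\ell}t$ in the combined estimate, giving
\[
\norm{v+\Phi^{-\ell}t}\leq\frac{\rho^{w-1}R}{1-\rho^{-w}}=\frac{\rho^{2w-1}R}{\rho^{w}-1}<\rho^{2w-1}r,
\]
where the last strict inequality is exactly the existence condition. The geometric implication then places $v+\Phi^{-\ell}t$ in $\Phi^{2w-1}T$; the edge case $\ell=0$ is immediate because then $v=0$ and $\norm{t}\leq R<\rho^{2w-1}r$.
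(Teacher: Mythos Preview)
Your proof is correct and follows essentially the same approach as the paper's own proof: the same geometric-series bound for (a), the same split-off-the-leading-term argument for (b), the same use of the existence condition~\eqref{eq:existence-condition} together with $\ballc{0}{r}\subseteq T$ for (c), and the same finite-depth refinement for (d), where your $m\leq\lceil\ell/w\rceil$ coincides with the paper's $\lfloor(\ell-1+w)/w\rfloor$ and the absorption step is the same inequality. The only differences are cosmetic (you enumerate non-zero positions directly rather than via Iverson brackets, and handle general $J$ without first normalising), so there is nothing substantive to compare.
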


Note that $\rho^J = \NAFd{\bfeta}{\bfzero}$, so we can rewrite the statements
of the proposition above in terms of that metric, see also
Corollary~\ref{cor:bounds-value}.


\begin{proof}
  \begin{enumerate}[(a)]

  \item In the calculations below, we use the Iversonian notation
    $\iverson{\ifdissertationkrenn\mathrm{expr}\else\var{expr}\fi}=1$ if
    $\ifdissertationkrenn\mathrm{expr}\else\var{expr}\fi$ is true and
    $\iverson{\ifdissertationkrenn\mathrm{expr}\else\var{expr}\fi}=0$
    otherwise, cf.\ Graham, Knuth and
    Patashnik~\cite{Graham-Knuth-Patashnik:1994}.
    
    The result follows trivially for $\bfeta=\bfzero$. First assume that the
    most significant digit of $\bfeta$ is at position $0$. Since
    $\norm{\eta_{-j}} \leq \rho^wR$ (cf.\ Remark~\ref{rem:digitset:tiling}),
    $\rho>1$ and $\bfeta$ is fulfilling the \wNAF{}-condition, we obtain
    \begin{align*}
      \norm{\NAFvalue{\bfeta}}
      &= \norm{\sum_{j=0}^{\infty} \Phi^{-j}\eta_{-j} }
      \leq \sum_{j=0}^{\infty} \norm{\Phi^{-1}}^j \norm{\eta_{-j}}
      = \sum_{j=0}^{\infty} \rho^{-j} \norm{\eta_{-j}} \\
      &\leq \rho^w R \sum_{j=0}^{\infty} \rho^{-j} 
      \tiverson{\eta_{-j} \neq 0} 
      \leq \rho^w R \sum_{j=0}^{\infty} \rho^{-j} 
      \tiverson{-j \equiv 0 \pmod{w}} \\
      &= \rho^w R \sum_{k=0}^{\infty} \rho^{-wk}
      = \frac{\rho^w R}{1-\rho^{-w}} = B_U.
    \end{align*}    

    In the general case, we have the most significant digit of $\bfeta$ at a
    position $J$. We get $\NAFvalue{\bfeta} = \Phi^J\NAFvalue{\bfeta'}$ for a
    \wNAF{} $\bfeta'$ with most significant digit at position $0$. Therefore we
    obtain
    \begin{equation*}
      \norm{\NAFvalue{\bfeta}} 
      = \norm{\Phi^J\NAFvalue{\bfeta'}}
      \leq \norm{\Phi}^J \norm{\NAFvalue{\bfeta'}}
      \leq \rho^J B_U, 
    \end{equation*}
    which was to be proved.

  \item There is nothing to show if the \wNAF{} $\bfeta$ is
    zero. First suppose that the most significant digit is at position
    $w$. Then, using \itemref{enu:upper-bound:leq-f}, we have
    \begin{equation*}
      \norm{\NAFvalue{\bfeta} - \Phi^w\eta_w} \leq B_U,
    \end{equation*}
    therefore
    \begin{equation*}
      \NAFvalue{\bfeta} \in \ball*{\Phi^w\eta_w}{B_U}.
    \end{equation*}
    Since $\eta_w \in \Phi^w T$, the statement follows for the special
    case. The general case is again obtained by shifting.

  \item Using the upper bound found in \itemref{enu:upper-bound:leq-f} and the
    assumption~\eqref{eq:existence-condition} yields
    \begin{equation*}
      \norm{\NAFvalue{\bfeta}} \leq \rho^J B_U  
      = \rho^J \frac{\rho^w R}{1 - \rho^{-w}} 
      \leq r\rho^{2w+J}.
    \end{equation*}
    Since $\ball*{0}{r\rho^{2w+J}} \subseteq \Phi^{2w+J} T$, the statement
    follows.
   
  \item Analogously to the proof of~\itemref{enu:upper-bound:leq-f}, except
    that we use $\ell$ for the upper bound of the sum, we obtain for $v\in T$
    \begin{align*}
      \norm{\NAFvalue{0\bfldot\eta_{-1}\ldots\eta_{-\ell}} + \Phi^{-\ell}v}
      &\leq \norm{\NAFvalue{0\bfldot\eta_{-1}\ldots\eta_{-\ell}}} 
      + \rho^{-\ell} R \\
      &\leq \rho^{-1} \frac{\rho^w R}{1 - \rho^{-w}}
      \left( 1 - \rho^{-w \floor{\frac{\ell-1+w}{w}}} \right)
      + \rho^{-\ell} R \\
      &\leq \frac{\rho^{w-1} R}{1 - \rho^{-w}}
      \left( 1 - \rho^{-\ell+1-w} 
        + \rho^{-\ell+1-w} \left(1 - \rho^{-w}\right) \right) \\
      &= \frac{\rho^{w-1} R}{1 - \rho^{-w}}
      \left( 1 - \rho^{-\ell+1-2w} \right).
    \end{align*}
    Since $1 - \rho^{-\ell+1-2w} < 1$, we get
    \begin{equation*}
      \norm{\NAFvalue{0\bfldot\eta_{-1}\ldots\eta_{-\ell}} + \Phi^{-\ell}T}
      \leq \rho^{-1} \frac{\rho^w R}{1 - \rho^{-w}} = \rho^{-1} B_U
    \end{equation*}
    for all $\ell\in\N_0$. By the same argumentation as in the proof
    of~\itemref{enu:upper-bound:in-v}, the statement follows. \qedhere
  \end{enumerate}
\end{proof}


Next we want to find a lower bound for the value of a \wNAF{}. Clearly the
\wNAF{} $\bfzero$ has value $0$, so we are interested in cases where we have a
non-zero digit somewhere.


\begin{proposition}[Lower Bound]\label{pro:lower-bound-fracnafs}
  Let $\bfeta\in\wNAFsetfininf$ be non-zero, and denote the position of the
  most significant digit of $\bfeta$ by $J$. Then we have
  \begin{equation*}
    \norm{\NAFvalue{\bfeta}} \geq \rho^J B_L,
  \end{equation*}
  where
  \begin{equation*}
    B_L = r - \rho^{-2w} B_U = r - \frac{R}{\rho^w - 1}.
  \end{equation*}
\end{proposition}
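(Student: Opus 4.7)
The plan is to isolate the contribution of the most significant term $\Phi^J\eta_J$ and bound the remaining ``tail'' using the already-proved upper bound in Proposition~\ref{pro:upper-bound-fracnafs}\itemref{enu:upper-bound:leq-f}. First I would write
\begin{equation*}
  \NAFvalue{\bfeta} = \Phi^J\eta_J + \sum_{j<J}\Phi^j\eta_j,
\end{equation*}
and invoke the \wNAF{}-condition, which forces $\eta_{J-1}=\dots=\eta_{J-w+1}=0$, to see that the tail is itself the value of a \wNAF{} whose most significant non-zero digit lies at some position $\leq J-w$. Proposition~\ref{pro:upper-bound-fracnafs}\itemref{enu:upper-bound:leq-f} applied to this tail then gives the norm bound $\rho^{J-w}B_U$.

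Next I would lower-bound the leading term by combining two ingredients. The first is the reverse operator estimate $\norm{\Phi^Jv}\geq\rho^J\norm{v}$ for every $v\in\R^n$, which follows from $\norm{\Phi^{-1}}=\rho^{-1}$ via $\norm{v}=\norm{\Phi^{-J}(\Phi^Jv)}\leq\rho^{-J}\norm{\Phi^Jv}$. The second is the digit size bound $\norm{\eta_J}\geq\rho^w r$ supplied by Remark~\ref{rem:digitset:tiling}. Together they yield $\norm{\Phi^J\eta_J}\geq\rho^{J+w}r$, and the reverse triangle inequality combined with the tail estimate then produces
\begin{equation*}
  \norm{\NAFvalue{\bfeta}} \geq \rho^{J+w}r - \rho^{J-w}B_U
  = \rho^{J+w}\left(r-\rho^{-2w}B_U\right)
  = \rho^{J+w}B_L.
\end{equation*}

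Since the existence hypothesis $R/r<\rho^w-1$ is exactly the statement $B_L>0$, and $\rho^w\geq 1$, this in particular gives the claimed inequality $\norm{\NAFvalue{\bfeta}}\geq\rho^J B_L$, with the extra factor $\rho^w$ to spare. I do not anticipate any real obstacle: the whole argument is a single reverse triangle inequality, and all three ingredients---the tail bound from Proposition~\ref{pro:upper-bound-fracnafs}, the operator-norm identity on $\Phi^{-1}$, and the digit size lower bound of Remark~\ref{rem:digitset:tiling}---are already on the shelf. The only care required is bookkeeping with powers of $\rho$, reflected in the identity $\rho^{J+w}r-\rho^{J-w}B_U=\rho^{J+w}B_L$ that links both sides of the reverse triangle estimate.
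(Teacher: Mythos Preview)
Your argument has a genuine gap: the lower digit-size bound $\norm{\eta_J}\geq\rho^w r$ you take from Remark~\ref{rem:digitset:tiling} does not hold in general, and without it the reverse triangle inequality is too weak to yield~$B_L$. Take $\Lambda=\Z$, $\Phi(x)=2x$, $T=[-\tfrac12,\tfrac12]$, $w=2$: then $r=R=\tfrac12$, $\rho^w r=2$, $B_U=\tfrac83$, $B_L=\tfrac13$, the digit set is $\cD=\{-1,0,1\}\subseteq\Phi^2T=[-2,2]$, and the nonzero digit $d=1$ satisfies $\norm{d}=1<2=\rho^w r$. Your stronger intermediate conclusion $\norm{\NAFvalue{\bfeta}}\geq\rho^{J+w}B_L$ is already false in this example: the single-digit \wNAF{} $\bfeta=1$ has $J=0$ and value~$1$, while $\rho^{w}B_L=\tfrac43>1$. (The upper bound $\norm{d}\leq\rho^w R$ in the Remark is fine; it is only the lower bound that fails, since $\Phi^{-w}d\in T$ says nothing about $\norm{\Phi^{-w}d}$ being at least~$r$.)

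The paper's own proof avoids this by not bounding $\norm{\eta_J}$ at all. After normalising to $J=0$, it uses part~\itemref{enu:upper-bound:in-balls} of Proposition~\ref{pro:upper-bound-fracnafs} to place $\NAFvalue{\bfeta}$ within a $\rho^{-2w}B_U$-neighbourhood of the \emph{entire} tiling cell $T_{\eta_0}=\eta_0+T$. Since $T_{\eta_0}$ and $T_0=T$ meet only along their boundaries and $\ball*{0}{r}\subseteq T_0$, the cell $T_{\eta_0}$ stays outside the open ball of radius~$r$ about~$0$; hence $\norm{\NAFvalue{\bfeta}}\geq r-\rho^{-2w}B_U=B_L$. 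The point is that the distance from~$0$ to the cell $T_{\eta_0}$ is at least~$r$ even when the lattice point $\eta_0$ itself is much closer than $\rho^w r$; a single reverse triangle inequality on $\Phi^J\eta_J$ and the tail cannot see this tiling geometry.
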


Note that $B_L>0$ is equivalent to $\frac{R}{r} < \rho^w-1$, i.e.\ the
assumption~\eqref{eq:existence-condition}. Moreover, we have
\begin{equation*}
  \frac{R}{r-B_L} = \rho^w-1.
\end{equation*}


\begin{proof}[Proof of Proposition~\ref{pro:lower-bound-fracnafs}]
  First suppose the most significant digit of the \wNAF{}~$\bfeta$ is at
  position $0$ and the second non-zero digit (read from left to right) at
  position~$J$. Then
  \begin{equation*}
    \NAFvalue{\bfeta} - \eta_0
    = \sum_{k=w}^\infty \Phi^{-k}\eta_{-k} 
    \in \bigcup_{z \in T} \ball*{z}{\rho^{-w+J} B_U}
    \subseteq \bigcup_{z \in T} \ball*{z}{\rho^{-2w} B_U}
  \end{equation*}
  according to \itemref{enu:upper-bound:in-balls} of
  Proposition~\ref{pro:upper-bound-fracnafs}. Therefore
  \begin{equation*}
    \NAFvalue{\bfeta} \in \bigcup_{z \in T_{\eta_0}} \ball*{z}{\rho^{-2w} B_U}.
  \end{equation*}
  This means that $\NAFvalue{\bfeta}$ is in $T_{\eta_0}$ or in a $\rho^{-2w}
  B_U$\nbd-strip around this cell.  The two tiling cells $T_{\eta_0}$ for
  $\eta_0$ and $T_0=T$ for $0$ are disjoint, except for parts of the boundary,
  if they are adjacent. Since a ball with radius $r$ is contained in each
  tiling cell, we deduce that
  \begin{equation*}
    \norm{\NAFvalue{\bfeta}}
    \geq r - \rho^{-2w} B_U
    = r - \frac{R}{\rho^w - 1} = B_L,
  \end{equation*}
  which was to be shown. The case of a general $J$ is again, as in the proof of
  Proposition~\ref{pro:upper-bound-fracnafs}, obtained by shifting.
\end{proof}


Combining the previous two propositions leads to the following corollary, which
gives an upper and a lower bound for the norm of the value of a \wNAF{} by
looking at the largest non-zero index.


\begin{corollary}[Bounds for the Value]\label{cor:bounds-value}
  Let $\bfeta\in\wNAFsetfininf$, then we get
  \begin{equation*}
     \NAFd{\bfeta}{\bfzero} B_L
    \leq \norm{\NAFvalue{\bfeta}}
    \leq \NAFd{\bfeta}{\bfzero} B_U.
  \end{equation*}
\end{corollary}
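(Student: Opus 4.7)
The plan is to observe that this corollary is purely a reformulation of Proposition~\ref{pro:upper-bound-fracnafs}\itemref{enu:upper-bound:leq-f} and Proposition~\ref{pro:lower-bound-fracnafs} in terms of the metric $\NAFdname$, as is already hinted at by the remark immediately after Proposition~\ref{pro:upper-bound-fracnafs}. The key identity is that if $\bfeta \neq \bfzero$ and $J$ denotes the position of its most significant digit, then $\max\set*{j\in\Z}{\eta_j \neq 0} = J$, so by the very definition of $\NAFdname$ we have $\NAFd{\bfeta}{\bfzero} = \rho^J$.

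First I would dispose of the trivial case $\bfeta = \bfzero$. Here $\NAFvalue{\bfzero} = 0$ (the sum is empty or all-zero), so $\norm{\NAFvalue{\bfeta}} = 0$, and also $\NAFd{\bfzero}{\bfzero} = 0$ by the definition of the metric. Both inequalities then reduce to $0 \leq 0 \leq 0$.

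For $\bfeta \neq \bfzero$, let $J$ be the position of the most significant digit, so that $\NAFd{\bfeta}{\bfzero} = \rho^J$. Proposition~\ref{pro:upper-bound-fracnafs}\itemref{enu:upper-bound:leq-f} yields $\norm{\NAFvalue{\bfeta}} \leq \rho^J B_U = \NAFd{\bfeta}{\bfzero} B_U$, and Proposition~\ref{pro:lower-bound-fracnafs} yields $\norm{\NAFvalue{\bfeta}} \geq \rho^J B_L = \NAFd{\bfeta}{\bfzero} B_L$. Chaining these gives exactly the claim.

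There is no real obstacle: the proof is a one-line bookkeeping step once one unwinds the definition of $\NAFd{\bfeta}{\bfzero}$. The only thing to be slightly careful about is the degenerate case $\bfeta = \bfzero$, where there is no most significant digit and the two propositions do not directly apply; treating it separately (as above) handles this cleanly.
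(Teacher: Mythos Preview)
Your proposal is correct and matches the paper's own proof, which simply notes that the corollary follows directly from Propositions~\ref{pro:upper-bound-fracnafs} and~\ref{pro:lower-bound-fracnafs} via the identity $\NAFd{\bfeta}{\bfzero}=\rho^J$. Your explicit treatment of the degenerate case $\bfeta=\bfzero$ is a small addition the paper leaves implicit, but otherwise the arguments are identical.
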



\begin{proof}
  This follows directly from Propositions~\ref{pro:upper-bound-fracnafs}
  and~\ref{pro:lower-bound-fracnafs}, since the term $\rho^J$ is
  equal to $\NAFd{\bfeta}{\bfzero}$.
\end{proof}


Lastly in this section, we want to find out if there are special \wNAF{}s for
which we know for sure that all their expansions start with a certain finite
\wNAF{}. This is formulated in the following lemma.


\begin{lemma}\label{lem:choosing-k0}
  There is a $k_0\in\N_0$ such that for all $k\geq k_0$ the following
  holds: If $\bfeta\in\wNAFsetinf$ starts with the word $0^k$, i.e.,
  $\eta_{-1}=0$, \ldots, $\eta_{-k}=0$, then we get for all
  $\bfxi\in\wNAFsetfininf$ that $\NAFvalue{\bfxi}=\NAFvalue{\bfeta}$ implies
  $\bfxi\in\wNAFsetellinf{0}$.
\end{lemma}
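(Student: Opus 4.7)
The plan is to exploit Corollary~\ref{cor:bounds-value} in both directions. The intuition is that the hypothesis pins $\norm{\NAFvalue{\bfeta}}$ down by a factor of roughly $\rho^{-k}$ times $B_U$, while any hypothetical $\bfxi$ carrying a non-zero digit at a non-negative position would have $\norm{\NAFvalue{\bfxi}} \geq B_L$; for $k$ sufficiently large these two ranges must be disjoint.

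Concretely, I would choose $k_0$ to be the least non-negative integer with $\rho^{k_0+1} B_L > B_U$. Such a $k_0$ exists because $\rho > 1$ and $B_L > 0$, the latter being equivalent to the standing condition~\eqref{eq:existence-condition}, as observed after Proposition~\ref{pro:lower-bound-fracnafs}. Fix $k \geq k_0$ and $\bfeta$ as in the hypothesis. The conditions $\eta_{-1} = \dots = \eta_{-k} = 0$, combined with the assumption $\bfeta \in \wNAFsetinf$ (read as forcing no non-zero digit at a non-negative position), place the most significant digit of $\bfeta$ at some position $J_\eta \leq -k-1$. Hence $\NAFd{\bfeta}{\bfzero} = \rho^{J_\eta} \leq \rho^{-k-1}$, and the upper bound of Corollary~\ref{cor:bounds-value} gives $\norm{\NAFvalue{\bfeta}} \leq \rho^{-k-1} B_U$.

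Now suppose for contradiction that some $\bfxi \in \wNAFsetfininf$ satisfies $\NAFvalue{\bfxi} = \NAFvalue{\bfeta}$ yet $\bfxi \notin \wNAFsetellinf{0}$. By definition of left-length, $\bfxi$ then carries a non-zero digit at some position $\geq 0$, so its most significant digit lies at some $J_\xi \geq 0$, and the lower bound of Corollary~\ref{cor:bounds-value} yields $\norm{\NAFvalue{\bfxi}} \geq \rho^{J_\xi} B_L \geq B_L$. Equating these two norms forces $B_L \leq \rho^{-k-1} B_U$, contradicting the choice of $k_0$. I do not anticipate any substantive obstacle: the whole argument is a direct application of the value bounds already in hand, and is a routine generalisation of the analogous quadratic-base step in~\cite{Heuberger-Krenn:2012:wnaf-analysis}, with $\norm{\,\cdot\,}$ in place of the absolute value.
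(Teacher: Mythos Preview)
Your argument is correct and follows essentially the same route as the paper: bound $\norm{\NAFvalue{\bfeta}}$ from above via Corollary~\ref{cor:bounds-value} using the hypothesis on the leading zeros, bound $\norm{\NAFvalue{\bfxi}}$ from below by $B_L$ under the assumption that $\bfxi$ has a non-zero digit at a non-negative index, and force a contradiction for $k$ large enough. The only cosmetic differences are that the paper uses the slightly looser estimate $\norm{\NAFvalue{\bfeta}}\leq\rho^{-k}B_U$ and then works out an explicit threshold $k>2w-\log_\rho\bigl(\tfrac{r}{R}(\rho^w-1)-1\bigr)$, which feeds into the subsequent remark about the $w$-dependence of $k_0$; your abstract choice of $k_0$ is perfectly adequate for the lemma as stated.
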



\begin{proof}
  Let $\bfxi = \bfxi_I\bfldot\bfxi_F$. Then
  $\norm{\NAFvalue{\bfxi_I\bfldot\bfxi_F}} < B_L$ implies $\bfxi_I=\bfzero$,
  cf.\ Corollary~\ref{cor:bounds-value}. Further, for our $\bfeta$ we obtain
  $z=\norm{\NAFvalue{\bfeta}} \leq \rho^{-k} B_U$. So it is sufficient to show
  that
  \begin{equation*}
    \rho^{-k} B_U < B_L,
  \end{equation*} 
  which is equivalent to
  \begin{equation*}
    k > \log_\rho \frac{B_U}{B_L}.
  \end{equation*}
  We obtain
  \begin{equation*}
    k > 2w - \log_\rho \left(\frac{r}{R}\left(\rho^w-1\right)-1\right),
  \end{equation*}
  where we just inserted the formulas for $B_U$ and $B_L$. Choosing an
  appropriate $k_0$ is now easily possible.
\end{proof}

Note that we can find a constant $k_1$ independent from $w$ such that for
all $k\geq 2w+k_1$ the assertion of Lemma~\ref{lem:choosing-k0} holds. This can
be seen in the proof, since $\frac{r}{R}\left(\rho^w-1\right)-1$ is
monotonically increasing in $w$.


\section{Right-infinite Expansions}
\label{sec:right-inf-expansions}


We have the existence of a (finite integer) \wNAF{}-expansion for each element
of the lattice $\Lambda\subseteq\R^n$, cf.\
Remark~\ref{rem:digitset:tiling}. But that existence condition is also
sufficient to get \wNAF{}-expansions for all elements in~$\R^n$. Those
expansions possibly have an infinite right-length. The aim of this section is
to show that result. The proofs themselves are a minor generalisation of the
ones given in~\cite{Heuberger-Krenn:2012:wnaf-analysis} for the quadratic case.


We will use the following abbreviation in this section. We define
\begin{equation*}
  [\Phi^{-1}]\Lambda \colonequals \bigcup_{j\in\N_0} \Phi^{-j}\Lambda.
\end{equation*}
Note that $\Lambda \subseteq \Phi^{-1}\Lambda$.


To prove the existence theorem of this section, we need the following three
lemmata.


\begin{lemma}\label{lem:value-injective}
  The function $\NAFvaluename\restricted{\wNAFsetfinfin}$ is injective.
\end{lemma}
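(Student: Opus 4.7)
To show that $\NAFvaluename\restricted{\wNAFsetfinfin}$ is injective, I would take two finite \wNAF{}s $\bfeta, \bfxi \in \wNAFsetfinfin$ with $\NAFvalue{\bfeta} = \NAFvalue{\bfxi}$ and argue that $\bfeta = \bfxi$ by induction on the total weight $N(\bfeta) + N(\bfxi)$, where $N(\bfeta)$ denotes the number of non-zero digits of $\bfeta$. The base case $N(\bfeta) + N(\bfxi) = 0$ is immediate, since both sequences equal $\bfzero$. In the inductive step, suppose first that exactly one of the two is $\bfzero$, say $\bfxi = \bfzero$ and $\bfeta \neq \bfzero$. Then Corollary~\ref{cor:bounds-value} gives $\norm{\NAFvalue{\bfeta}} \geq \NAFd{\bfeta}{\bfzero} B_L$, and $B_L > 0$ by the existence condition~\eqref{eq:existence-condition}, contradicting $\NAFvalue{\bfeta} = 0$.

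Now assume both $\bfeta$ and $\bfxi$ are non-zero. Let $K$ be the smallest index at which either sequence carries a non-zero digit, and put $\bar\eta_j \colonequals \eta_{j+K}$ and $\bar\xi_j \colonequals \xi_{j+K}$. The resulting sequences are integer \wNAF{}s in $\wNAFsetfinell{0}$ with values in $\Lambda$, and injectivity of $\Phi$ on $\R^n$ transports the original equality into an equality of their values. The crucial step is to identify $\bar\eta_0$ with $\bar\xi_0$. Since every $\Phi^j\bar\eta_j$ with $j \geq 1$ belongs to $\Phi\Lambda$, we read off $\bar\eta_0 \equiv \bar\xi_0 \pmod{\Phi\Lambda}$. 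Definition~\ref{def:red-residue-digit-set} enforces $\cD \cap \Phi\Lambda = \set{0}$, so either both digits vanish (excluded by the choice of $K$) or both are non-zero. In the latter situation the \wNAF{} condition yields $\bar\eta_1 = \cdots = \bar\eta_{w-1} = 0$ and similarly for $\bar\xi$; the same calculation modulo $\Phi^w\Lambda$ then reads $\bar\eta_0 \equiv \bar\xi_0 \pmod{\Phi^w\Lambda}$, and the uniqueness clause of Definition~\ref{def:red-residue-digit-set} forces $\bar\eta_0 = \bar\xi_0$.

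Once $\bar\eta_0 = \bar\xi_0$ is established, subtracting this common digit, factoring out $\Phi$, and cancelling by injectivity of $\Phi$ reduces the equality of values to $\NAFvalue{\tilde\eta} = \NAFvalue{\tilde\xi}$, where $\tilde\eta_j \colonequals \bar\eta_{j+1}$ and $\tilde\xi_j \colonequals \bar\xi_{j+1}$; these remain \wNAF{}s in $\wNAFsetfinfin$ of combined weight exactly two less than that of $\bfeta$ and $\bfxi$. The induction hypothesis then yields $\tilde\eta = \tilde\xi$, from which $\bfeta = \bfxi$ follows by undoing the shifts. I expect the only real obstacle to be the bookkeeping in the residue-class step: ensuring, via Definition~\ref{def:red-residue-digit-set}, that any non-zero element of $\cD$ must lie outside $\Phi\Lambda$ (so the asymmetric case where only one of $\bar\eta_0, \bar\xi_0$ vanishes cannot occur) and that the \wNAF{} condition supplies the $w-1$ zeros needed to isolate $\bar\eta_0$ modulo $\Phi^w\Lambda$. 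Everything else is routine and follows the quadratic argument in~\cite{Heuberger-Krenn:2012:wnaf-analysis}.
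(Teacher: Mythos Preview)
Your argument is correct, but it takes a rather different route from the paper's own proof. The paper dispatches the lemma in two lines: given $\bfeta,\bfxi\in\wNAFsetfinfin$ with equal value, one chooses $J$ so that $\Phi^J\NAFvalue{\bfeta}=\Phi^J\NAFvalue{\bfxi}\in\Lambda$, and then invokes the uniqueness of integer \wNAF{}-expansions already recorded in Remark~\ref{rem:digitset:tiling} (imported from~\cite{Heuberger-Krenn:2013:general-wnaf}). Your induction on the combined weight, by contrast, essentially \emph{reproves} that uniqueness from the axioms of the reduced residue digit set, using the congruence $\bar\eta_0\equiv\bar\xi_0\pmod{\Phi\Lambda}$ and then $\pmod{\Phi^w\Lambda}$ to pin down the lowest digit, together with $B_L>0$ to rule out a non-zero \wNAF{} with value $0$. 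The trade-off is the obvious one: your proof is self-contained and does not lean on the external existence/uniqueness theorem, at the price of being substantially longer and duplicating an argument that the paper has deliberately outsourced. If you prefer to keep the paper's economy, a single shift to $\wNAFsetfinell{0}$ and an appeal to Remark~\ref{rem:digitset:tiling} suffices.
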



\begin{proof}
  Let $\bfeta$ and $\bfxi$ be elements of $\wNAFsetfinfin$ with
  $\NAFvalue{\bfeta} = \NAFvalue{\bfxi}$. This implies that $\Phi^J
  \NAFvalue{\bfeta} = \Phi^J \NAFvalue{\bfxi} \in \Lambda$ for some
  $J\in\Z$. By uniqueness of the integer \wNAF{}s we conclude that
  $\bfeta=\bfxi$.
\end{proof}


\begin{lemma}\label{lem:Zinvtau-is-wnaffinfin}
  We have $\NAFvalue{\wNAFsetfinfin} = [\Phi^{-1}]\Lambda$.
\end{lemma}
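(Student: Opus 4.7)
The proof naturally splits into two inclusions; I would treat them separately.

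For the inclusion $\NAFvalue{\wNAFsetfinfin} \subseteq [\Phi^{-1}]\Lambda$, let $\bfeta \in \wNAFsetfinfin$ have right-length at most $\ell \in \N_0$. Then
\begin{equation*}
  \Phi^{\ell}\NAFvalue{\bfeta} = \sum_{j \geq -\ell} \Phi^{j+\ell}\eta_j,
\end{equation*}
which is a finite sum of lattice elements since $\cD \subseteq \Lambda$ and $\Phi(\Lambda) \subseteq \Lambda$. Hence $\NAFvalue{\bfeta} \in \Phi^{-\ell}\Lambda \subseteq [\Phi^{-1}]\Lambda$.

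For the reverse inclusion $[\Phi^{-1}]\Lambda \subseteq \NAFvalue{\wNAFsetfinfin}$, let $z \in \Phi^{-j}\Lambda$ for some $j \in \N_0$. Then $\Phi^j z \in \Lambda$, so by Remark~\ref{rem:digitset:tiling} there exists an integer \wNAF{} $\bfxi \in \wNAFsetfinell{0} \subseteq \wNAFsetfinfin$ with $\NAFvalue{\bfxi} = \Phi^j z$. Shifting $\bfxi$ by $j$ positions to the right, i.e.\ defining $\bfeta$ by $\eta_k \colonequals \xi_{k+j}$, yields a sequence which still satisfies the \wNAF{}-condition (the condition is shift-invariant because it is local on windows of width~$w$), has finite left-length and right-length at most $j$, and by linearity of the value function satisfies $\NAFvalue{\bfeta} = \Phi^{-j}\NAFvalue{\bfxi} = z$.

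The plan contains essentially no obstacle: both directions reduce to the definition of $\NAFvaluename$ and the existence of integer \wNAF{}-expansions for elements of $\Lambda$, which has already been established in Remark~\ref{rem:digitset:tiling}. The only point to verify carefully is that a right-shift preserves the \wNAF{}-condition and that $\cD \subseteq \Lambda$ (which follows from the definition of a reduced residue digit set modulo $\Phi^w$), so that $\Phi^\ell \NAFvalue{\bfeta}$ is genuinely a lattice element after clearing denominators.
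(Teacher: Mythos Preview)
Your proof is correct and follows essentially the same approach as the paper: both directions are handled by multiplying by an appropriate power of $\Phi$ to move between $\Lambda$ and $[\Phi^{-1}]\Lambda$, invoking the existence of integer \wNAF{}s (Remark~\ref{rem:digitset:tiling}) for the reverse inclusion, and then shifting. Your version is somewhat more explicit about the shift preserving the \wNAF{}-condition and about $\cD\subseteq\Lambda$, but the argument is the same.
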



\begin{proof}
  Let $\bfeta\in\wNAFsetfinfin$. There are only finitely many $\eta_j\neq 0$,
  so there is a $J\in\N_0$ such that
  $\NAFvalue{\bfeta}\in\Phi^{-J}\Lambda$. Conversely, if
  $z\in\Phi^{-J}\Lambda$, then there is an integer \wNAF{} of $\Phi^Jz$, and
  therefore, there is a $\bfxi\in\wNAFsetfinfin$ with $\NAFvalue{\bfxi}=z$.
\end{proof}


\begin{lemma}\label{lem:Zinvtau-dense}
  $[\Phi^{-1}]\Lambda$ is dense in $\R^n$.
\end{lemma}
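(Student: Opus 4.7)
The plan is to exploit the fact that for each $j\in\N_0$, the set $\Phi^{-j}\Lambda$ is itself a lattice with a fundamental tile of diameter at most $2\rho^{-j}R$, and to let $j\to\infty$.

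More concretely, I would first observe that since $\Phi$ (and hence each $\Phi^{-j}$) is a linear bijection of $\R^n$, the tiling property of $T$ with respect to $\Lambda$ transfers: applying $\Phi^{-j}$ to both sides of
\begin{equation*}
  \bigcup_{u\in\Lambda}(u+T)=\R^n
\end{equation*}
gives
\begin{equation*}
  \bigcup_{u\in\Phi^{-j}\Lambda}\bigl(u+\Phi^{-j}T\bigr)=\R^n,
\end{equation*}
so $\Phi^{-j}T$ tiles $\R^n$ by the lattice $\Phi^{-j}\Lambda$. Moreover, from $T\subseteq\ballc{0}{R}$ and $\norm{\Phi^{-j}}=\rho^{-j}$ we obtain $\Phi^{-j}T\subseteq\ballc{0}{\rho^{-j}R}$.

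Now, given any $z\in\R^n$ and $\varepsilon>0$, choose $j\in\N_0$ large enough so that $\rho^{-j}R<\varepsilon$; this is possible since $\rho>1$. By the tiling property just recorded, there exist $u\in\Phi^{-j}\Lambda$ and $v\in\Phi^{-j}T$ with $z=u+v$, and then
\begin{equation*}
  \norm{z-u}=\norm{v}\leq\rho^{-j}R<\varepsilon.
\end{equation*}
Since $u\in\Phi^{-j}\Lambda\subseteq[\Phi^{-1}]\Lambda$, this shows that every neighbourhood of $z$ meets $[\Phi^{-1}]\Lambda$, which is precisely density.

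There is no real obstacle here; the only small point worth being explicit about is that shrinking the tile $T$ by the contraction $\Phi^{-j}$ really does produce a tiling by the scaled lattice, which in turn follows immediately from $\Phi^{-j}$ being a homeomorphism. Everything else is a one-line geometric approximation argument using the diameter bound $\Phi^{-j}T\subseteq\ballc{0}{\rho^{-j}R}$ coming from the set-up in Section~\ref{sec:set-up}.
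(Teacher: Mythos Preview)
Your proof is correct and follows essentially the same idea as the paper: approximate an arbitrary $z\in\R^n$ by a point of the refined lattice $\Phi^{-j}\Lambda$, using that a fundamental domain for this lattice has diameter $\Oh{\rho^{-j}}$. The only cosmetic difference is that you use the tile $T$ and its bound $R$ from the set-up, whereas the paper writes $\Phi^{K}z$ in the lattice basis $w_1,\dots,w_n$, takes integer parts of the coordinates, and bounds the error by $\rho^{-K}(\norm{w_1}+\dots+\norm{w_n})$; both are instances of the same one-line approximation argument.
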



\begin{proof}
  Let $\Lambda = w_1\Z \oplus \dots \oplus w_n\Z$ for linearly independent
  $w_1,\dots,w_n\in\R^n$. Let $z\in\R^n$ and $K\in\N_0$. Then $\Phi^K z=
  z_1w_1+\dots+z_nw_n$ for some reals $z_1,\dots,z_n$. We have
  \begin{equation*}
    \norm{z-\left(\floor{z_1}\Phi^{-K}w_1+\dots+\floor{z_n}\Phi^{-K}w_n\right)}
    < \rho^{-K}\left(\norm{w_1}+\dots+\norm{w_n}\right),
  \end{equation*}
  which proves the lemma.
\end{proof}


Now we can prove the following theorem.


\begin{theorem}[Existence Theorem concerning $\R^n$]\label{thm:C-has-wnaf-exp}
  Let $z\in\R^n$. Then there is an $\bfeta\in\wNAFsetfininf$ such that
  $z=\NAFvalue{\bfeta}$, i.e., each element in $\R^n$ has a \wNAF{}-expansion.
\end{theorem}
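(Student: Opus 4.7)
The plan is to approximate an arbitrary $z\in\R^n$ by values of \emph{finite} \wNAF{}s, and then use compactness to extract a convergent subsequence whose limit is a right-infinite \wNAF{} having value~$z$. All three ingredients we need have already been prepared: density of $[\Phi^{-1}]\Lambda$ in~$\R^n$ (Lemma~\ref{lem:Zinvtau-dense}), representability of all points of $[\Phi^{-1}]\Lambda$ by elements of $\wNAFsetfinfin$ (Lemma~\ref{lem:Zinvtau-is-wnaffinfin}), and the compactness of $\wNAFsetellinf{\ell}$ together with Lipschitz continuity of the value function (Propositions~\ref{pro:naf-compact} and~\ref{pro:value-continuous}).

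First I would fix $z\in\R^n$ and, using Lemma~\ref{lem:Zinvtau-dense}, pick a sequence $z_k\in[\Phi^{-1}]\Lambda$ with $z_k\to z$. By Lemma~\ref{lem:Zinvtau-is-wnaffinfin}, each $z_k$ equals $\NAFvalue{\bfeta^{(k)}}$ for some $\bfeta^{(k)}\in\wNAFsetfinfin$. The sequence $(z_k)$ is bounded, so there is a constant $M$ with $\norm{\NAFvalue{\bfeta^{(k)}}}\leq M$ for all $k$.

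Next I would use Corollary~\ref{cor:bounds-value} to turn this value bound into a uniform bound on the left-length. Indeed, $\NAFd{\bfeta^{(k)}}{\bfzero}\, B_L \leq \norm{\NAFvalue{\bfeta^{(k)}}}\leq M$, so $\NAFd{\bfeta^{(k)}}{\bfzero}\leq M/B_L$, which means that the position $J_k$ of the most significant digit of $\bfeta^{(k)}$ satisfies $\rho^{J_k}\leq M/B_L$. Choose an integer $\ell$ large enough so that $\rho^\ell\geq M/B_L$; then $\bfeta^{(k)}\in\wNAFsetellinf{\ell}$ for every $k$.

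The metric space $\wNAFsetellinf{\ell}$ is compact by Proposition~\ref{pro:naf-compact}, so $(\bfeta^{(k)})$ has a subsequence $(\bfeta^{(k_j)})$ converging to some $\bfeta\in\wNAFsetellinf{\ell}\subseteq\wNAFsetfininf$. By Lipschitz continuity of $\NAFvaluename$ on $\wNAFsetfininf$ (Proposition~\ref{pro:value-continuous}),
\begin{equation*}
  \NAFvalue{\bfeta}=\lim_{j\to\infty}\NAFvalue{\bfeta^{(k_j)}}=\lim_{j\to\infty}z_{k_j}=z,
\end{equation*}
which completes the argument. The only subtle point is the uniform bound on left-length, for which the sharp lower bound $B_L>0$ provided by the existence condition~\eqref{eq:existence-condition} is indispensable; once this is in hand the rest is a standard compactness-plus-continuity argument, so I do not anticipate a serious obstacle.
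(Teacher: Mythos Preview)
Your proposal is correct and follows essentially the same route as the paper's proof: approximate $z$ by points of $[\Phi^{-1}]\Lambda$ via Lemma~\ref{lem:Zinvtau-dense}, represent those by finite \wNAF{}s via Lemma~\ref{lem:Zinvtau-is-wnaffinfin}, use Corollary~\ref{cor:bounds-value} to get a uniform bound on the left-length, extract a convergent subsequence by compactness (Proposition~\ref{pro:naf-compact}), and conclude using continuity of the value map (Proposition~\ref{pro:value-continuous}). The only difference is that you spell out the left-length bound a bit more explicitly than the paper does.
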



\begin{proof}
  By Lemma~\ref{lem:Zinvtau-dense}, there is a sequence
  $z_n\in[\Phi^{-1}]\Lambda$ converging to $z$. By
  Lemma~\ref{lem:Zinvtau-is-wnaffinfin}, there is a sequence
  $\bfeta_n\in\wNAFsetfinfin$ with $\NAFvalue{\bfeta_n}=z_n$ for all $n$. By
  Corollary~\ref{cor:bounds-value} the sequence $\NAFd{\bfeta_n}{0}$ is
  bounded from above, so there is an $\ell$ such that
  $\bfeta_n\in\wNAFsetellfin{\ell} \subseteq \wNAFsetellinf{\ell}$. By
  Proposition~\vref{pro:naf-compact}, we conclude that there is a convergent
  subsequence $\bfeta'_n$ of $\bfeta_n$. Set $\bfeta\colonequals
  \lim_{n\to\infty}{\bfeta'_n}$. By continuity of $\NAFvaluename$, see
  Proposition~\vref{pro:value-continuous}, we conclude that
  $\NAFvalue{\bfeta}=z$.
\end{proof}


\section{The Fundamental Domain}
\label{sec:fundamental-domain}


We now derive properties of the \emph{Fundamental Domain}, i.e., the subset
of~$\R^n$ representable by \wNAF{}s which vanish left of the
$\Phi$\nbd-point. The boundary of the fundamental domain is shown to correspond
to elements which admit more than one \wNAF{}s differing left of the
$\Phi$\nbd-point. Finally, an upper bound for the Hausdorff dimension of the
boundary is derived.

All the results in this section are generalisations of the propositions and
remarks found in~\cite{Heuberger-Krenn:2012:wnaf-analysis}.  For some of those
results given here, the proof is the same as in the quadratic case or a
straightforward generalisation of it. In those cases the proofs will be
skipped.

We start with the formal definition of the fundamental domain.


\begin{definition}[Fundamental Domain]\label{def:fund-domain}
  The set
  \begin{equation*}
    \cF \colonequals  \NAFvalue{\wNAFsetinf} 
    = \set*{\NAFvalue{\bfxi}}{\bfxi\in\wNAFsetinf}
  \end{equation*}
  is called \emph{fundamental domain}.
\end{definition}


The pictures in Figure~\vref{fig:w-weta} show some fundamental domains for
lattices coming from imaginary-quadratic algebraic integers~$\tau$. We continue
with some properties of fundamental domains. We have the following compactness
result.


\begin{proposition}\label{pro:fund-domain-compact}
  The fundamental domain $\cF$ is compact.
\end{proposition}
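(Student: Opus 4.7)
The proof should be a direct application of two earlier results combined with a standard fact from point-set topology. The plan is to recognise $\wNAFsetinf$ as exactly the space $\wNAFsetellinf{0}$ of \wNAF{}s whose left-length is at most $0$, i.e.\ those whose integer part vanishes, and then transfer compactness through the value function.

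First I would invoke Proposition~\ref{pro:naf-compact} with $\ell=0$ to conclude that the metric space $\left(\wNAFsetellinf{0},\NAFdname\right) = \left(\wNAFsetinf,\NAFdname\right)$ is compact. Second, I would recall Proposition~\ref{pro:value-continuous}, which tells us that $\NAFvaluename\colon\wNAFsetfininf\to\R^n$ is Lipschitz continuous and hence, in particular, continuous on the subspace $\wNAFsetinf$. Third, I would apply the standard topological fact that the continuous image of a compact set is compact: since
\begin{equation*}
  \cF = \NAFvalue{\wNAFsetinf},
\end{equation*}
it follows immediately that $\cF$ is a compact subset of $\R^n$.

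There is essentially no obstacle here; the real work has already been done in Propositions~\ref{pro:naf-compact} and~\ref{pro:value-continuous}. The only thing worth a brief sentence is the identification $\wNAFsetinf = \wNAFsetellinf{0}$ (so that the integer part $\bfeta_I$ is $\bfzero$ and the \wNAF{}s in question are precisely those supported right of the $\Phi$-point), which ensures that Proposition~\ref{pro:naf-compact} is applicable.
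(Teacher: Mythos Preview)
Your argument is correct and is precisely the intended one: the paper does not spell out a proof here but defers to the quadratic case in \cite{Heuberger-Krenn:2012:wnaf-analysis}, and the straightforward generalisation is exactly the compactness-plus-continuity argument you give, using Proposition~\ref{pro:naf-compact} with $\ell=0$ together with Proposition~\ref{pro:value-continuous}. The identification $\wNAFsetinf=\wNAFsetellinf{0}$ is immediate from the definition of left-length, so nothing further is needed.
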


\begin{proof}
  The proof is a straightforward generalisation of the proof of the quadratic
  case in~\cite{Heuberger-Krenn:2012:wnaf-analysis}.
\end{proof}


We can also compute the Lebesgue measure of the fundamental domain. This result
can be found in Remark~\vref{rem:meas-fund-domain}. To calculate~$\lmeas{\cF}$,
we will need the results of Sections~\ref{sec:cell-rounding-op}
and~\ref{sec:sets-w_eta}.


The space~$\R^n$ has a tiling property with respect to the fundamental
domain. This fact is stated in the following proposition.


\begin{proposition}[Tiling Property]\label{cor:complex-plane-tiling}
  The space~$\R^n$ can be tiled with scaled versions of the fundamental domain
  $\cF$. Only finitely many different sizes are needed. More precisely: Let
  $K\in\Z$, then
  \begin{equation*}
    \R^n = 
    \bigcup_{\substack{k\in\set{K,K+1,\dots,K+w-1} 
        \\ \bfxi \in \wNAFsetfinell{0}
        \\ \text{$k\neq K+w-1$ implies $\bfxi_0\neq0$}}}
    \left( \Phi^k \NAFvalue{\bfxi} + \Phi^{k-w+1} \cF \right),
  \end{equation*}
  and the intersection of two different $\Phi^k \NAFvalue{\bfxi} + \Phi^{k-w+1}
  \cF$ in this union is a subset of the intersection of their boundaries.
\end{proposition}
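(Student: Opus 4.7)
The plan is to combine the existence of right-infinite $w$-NAF expansions for arbitrary points of~$\R^n$ (Theorem~\ref{thm:C-has-wnaf-exp}) with a careful splitting of such an expansion at a position chosen from the window $\{K,K+1,\ldots,K+w-1\}$. The split will drop $z$ into exactly one tile, and the window-$w$ non-adjacency will automatically supply the $w-1$ forbidden positions between the integer and fractional parts.

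For the covering step, take $z\in\R^n$ and fix any $\bfeta\in\wNAFsetfininf$ with $\NAFvalue{\bfeta}=z$. Set
\begin{equation*}
  k \;=\; \begin{cases} \text{the unique } j\in\{K,\ldots,K+w-1\} \text{ with } \eta_j\neq 0, & \text{if such a } j \text{ exists,}\\ K+w-1, & \text{otherwise.}\end{cases}
\end{equation*}
Uniqueness in the first case follows from the $w$-NAF condition applied to the window of width~$w$ starting at~$K$, and that same condition forces $\eta_{k-w+1}=\cdots=\eta_{k-1}=0$; in the second case these positions are zero by assumption. Define $\bfxi\in\wNAFsetfinell{0}$ by $\xi_j=\eta_{j+k}$ for $j\geq 0$ and $\bfzeta\in\wNAFsetinf$ by $\zeta_j=\eta_{j+k-w+1}$ for $j<0$ (and zero elsewhere); both inherit the $w$-NAF condition from~$\bfeta$. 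A direct index shift yields
\begin{equation*}
  z \;=\; \Phi^k\NAFvalue{\bfxi} + \Phi^{k-w+1}\NAFvalue{\bfzeta} \;\in\; \Phi^k\NAFvalue{\bfxi} + \Phi^{k-w+1}\cF,
\end{equation*}
placing $z$ in the required tile. In the first case $\xi_0=\eta_k\neq 0$ meets the side condition on~$\bfxi_0$; in the second case $k=K+w-1$ makes that condition vacuous.

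For the overlap claim, I would exploit that the tile map $v\mapsto\Phi^k\NAFvalue{\bfxi}+\Phi^{k-w+1}v$ is an affine bijection of~$\R^n$ (invertibility of~$\Phi$), so it sends $\operatorname{int}\cF$ onto the interior and $\partial\cF$ onto the boundary of the tile. If $z$ lies in two tiles with different parameters $(k_1,\bfxi_1)\neq(k_2,\bfxi_2)$, then writing $v_i\in\cF$ for the corresponding pre-images and choosing right-infinite $w$-NAFs $\bfzeta_i$ for them, the gluing with $\bfxi_i$ across the prescribed zero gap produces two $w$-NAF expansions of~$z$ which are distinct, because the selection rule above recovers $(k_i,\bfxi_i)$ canonically from the glued expansion. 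Hence each~$v_i$ must be represented by more than one element of $\wNAFsetinf$, and the claim reduces to identifying such multi-representation points with $\partial\cF$.

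Proving this last identification is the main obstacle. I would derive it from the affine self-similar decomposition of~$\cF$ obtained by splitting a right-infinite $w$-NAF after its leading run of $w-1$ zeros, which exhibits $\cF$ as a finite union of affinely rescaled copies of itself whose pieces overlap only on boundaries. Combined with the compactness statement of Proposition~\ref{pro:fund-domain-compact} and the Lipschitz continuity of $\NAFvaluename$ from Proposition~\ref{pro:value-continuous}, a standard attractor argument then yields that interior points of $\cF$ admit a unique right-infinite $w$-NAF address, while multi-address points accumulate exactly on $\partial\cF$. With that in hand, both conclusions of the proposition follow.
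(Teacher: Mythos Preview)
Your covering argument is correct: the selection rule locates the right $k$ in the window $\{K,\dots,K+w-1\}$, the \wNAF{}-condition supplies the $w-1$ forced zeros, the glued sequence is a valid element of $\wNAFsetfininf$, and the same rule recovers $(k,\bfxi)$ from it, so the side condition on $\xi_0$ is met automatically.

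The overlap argument, however, has a genuine gap. From $z$ lying in two tiles you correctly obtain two distinct \wNAF{}s of $z$, but the step ``Hence each $v_i$ must be represented by more than one element of $\wNAFsetinf$'' does not follow --- the second expansion $\bfeta_2$ does not, after removing $\Phi^{k_1}\NAFvalue{\bfxi_1}$ and rescaling, yield an element of $\wNAFsetinf$ representing $v_1$ --- and the identification you then reduce to is actually false. Take $n=1$, $\Lambda=\Z$, $\Phi=2$, $T=[-\tfrac12,\tfrac12]$, $w=2$, $\cD=\{-1,0,1\}$, which satisfies all hypotheses of Section~\ref{sec:set-up}; here $\cF=[-\tfrac23,\tfrac23]$. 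The \emph{interior} point $\tfrac13$ has two representations in $\wNAFsetinf$, namely $0\bfldot0101\ldots$ and $0\bfldot10(-1)0(-1)0\ldots$, so your ``standard attractor argument'' cannot possibly prove uniqueness of addresses on $\interior*{\cF}$. Conversely, the \emph{boundary} point $\tfrac23$ has the \emph{unique} representation $0\bfldot1010\ldots$ in $\wNAFsetinf$; yet $\tfrac23$ does lie in two distinct tiles (for $K=0$ these are $[-\tfrac23,\tfrac23]$ and $[\tfrac23,\tfrac43]$), so with $v_1=\tfrac23$ your ``Hence'' already fails. The right criterion is the one in Proposition~\ref{pro:char-boundary}: $v\in\boundary*{\cF}$ iff $v$ admits a \wNAF{} with non-zero \emph{integer} part (for $\tfrac23$ this is $1\bfldot0(-1)0(-1)0\ldots$), not multiplicity of purely fractional addresses. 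Note also that your proposed detour through the self-similar decomposition presupposes exactly the open-set condition that this paper \emph{derives from} the tiling property in Remark~\ref{rem:ifs}, so invoking it here is circular. The paper itself defers the proof to the quadratic case in~\cite{Heuberger-Krenn:2012:wnaf-analysis}, where the disjointness of interiors is handled together with the boundary characterisation of Proposition~\ref{pro:char-boundary}; that is the tool you should be reaching for.
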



\begin{proof}
  The proof is a straightforward generalisation of the proof of the quadratic
  case in~\cite{Heuberger-Krenn:2012:wnaf-analysis}.
\end{proof}


Note that the intersection of the two different sets of the tiling in the
previous corollary has Lebesgue measure $0$. This will be a consequence of
Proposition~\ref{pro:boundary-fund-dom-dim-upper}.


\begin{remark}[Iterated Function System]\label{rem:ifs}
  Define $\f{f_0}{z} = \Phi^{-1}z$ and for a non-zero digit
  $\vartheta\in\cD^\bullet$ define $\f{f_\vartheta}{z} = \Phi^{-1}\vartheta +
  \Phi^{-w}z$. Then the \emph{(affine) iterated function system}
  $\ifs{f_\vartheta}{\vartheta\in\cD}$, cf.\ Edgar~\cite{Edgar:2008:measur} or
  Barnsley~\cite{Barnsley:1988:fractals-ew}, has the fundamental domain~$\cF$
  as an invariant set, i.e., 
  \begin{equation*}
    \cF = \bigcup_{\vartheta\in\cD} \f{f_\vartheta}{\cF}
    = \Phi^{-1}\cF \cup \bigcup_{\vartheta\in\cD^\bullet} 
    \left( \Phi^{-1}\vartheta + \Phi^{-w}\cF \right).
  \end{equation*}
  That formula also reflects the fact that we have two possibilities building
  the elements $\bfxi\in\wNAFsetinf$ from left to right: We can either append
  $0$, which corresponds to an application of $\Phi^{-1}$, or we can append a
  non-zero digit $\vartheta\in\cD^\bullet$ and then add $w-1$ zeros.

  Furthermore, the iterated function system
  $\ifs{f_\vartheta}{\vartheta\in\cD}$ fulfils \emph{Moran's open set
    condition}\footnote{``Moran's open set condition'' is sometimes just called
    ``open set condition''}, cf.\ Edgar~\cite{Edgar:2008:measur} or
  Barnsley~\cite{Barnsley:1988:fractals-ew}. The \emph{Moran open set} used is
  $\interior*{\cF}$. This set satisfies
  \begin{equation*}
    \f{f_\vartheta}{\interior*{\cF}}\cap\f{f_{\vartheta'}}{\interior*{\cF}}
    =\emptyset
  \end{equation*}
  for $\vartheta \neq \vartheta'\in\cD$ and
  \begin{equation*}
    \interior*{\cF} \supseteq \f{f_\vartheta}{\interior*{\cF}}
  \end{equation*}
  for all $\vartheta\in\cD$. We remark that the first condition follows directly
  from the tiling property in Corollary~\ref{cor:complex-plane-tiling} with
  $K=-1$. The second condition follows from the fact that $f_\vartheta$ is an
  open mapping.
\end{remark}


Next we want to have a look at the Hausdorff dimension of the boundary of
$\cF$. We will need the following characterisation of the boundary.


\begin{proposition}[Characterisation of the Boundary]\label{pro:char-boundary}
  Let $z\in\cF$. Then $z\in\boundary*{\cF}$ if and only if there exists a
  \wNAF{} $\bfxi_I\bfldot\bfxi_F\in\wNAFsetfininf$ with $\bfxi_I\neq\bfzero$
  such that $z=\NAFvalue{\bfxi_I\bfldot\bfxi_F}$.
\end{proposition}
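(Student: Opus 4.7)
The plan is to handle the two implications separately: the ``only if'' direction uses compactness to extract a \wNAF{}-limit of expansions of points close to $z$ but outside $\cF$, while the ``if'' direction places $z$ in two distinct tiles of the tiling provided by Corollary~\ref{cor:complex-plane-tiling}.

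For ``$\Leftarrow$'', let $z = \NAFvalue{\bfxi_I \bfldot \bfxi_F}$ with $\bfxi_I \neq \bfzero$, and denote by $m \geq 0$ the position of the least non-zero digit of $\bfxi_I$. The \wNAF{}-condition applied to the width-$w$ block anchored at position~$m$ forces $\xi_{m-1} = \dots = \xi_{m-w+1} = 0$, which in particular covers all fractional positions $j \in \{m-w+1,\dots,-1\}$ when $m < w-1$. Setting $\tilde{k} \colonequals \min\set{m}{w-1}$ and letting $\tilde\bfxi_I \in \wNAFsetfinell{0}$ be $\bfxi_I$ shifted down by $\tilde{k}$ positions, this zero-block enables the factorisation
\begin{equation*}
  z = \Phi^{\tilde{k}} \NAFvalue{\tilde\bfxi_I} + \Phi^{\tilde{k} - w + 1} z''
\end{equation*}
with $z'' \in \cF$, $\tilde\bfxi_I \neq \bfzero$, and $(\tilde\bfxi_I)_0 \neq 0$ whenever $\tilde{k} < w-1$. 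Thus $z$ lies in the admissible tile of Corollary~\ref{cor:complex-plane-tiling} (with $K=0$) indexed by $(\tilde{k}, \tilde\bfxi_I)$, which is distinct from the admissible tile $(w-1, \bfzero)$ whose set is $\cF$ itself. Since distinct tiles intersect only in their boundaries, $z \in \boundary*{\cF}$.

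For ``$\Rightarrow$'', since $\cF$ is closed (Proposition~\ref{pro:fund-domain-compact}), pick a sequence $z_n \in \R^n \setminus \cF$ with $z_n \to z$. Theorem~\ref{thm:C-has-wnaf-exp} furnishes a \wNAF{}-expansion $\bfeta^{(n)}$ of each $z_n$, and because $z_n \notin \cF$, at least one non-zero digit of $\bfeta^{(n)}$ sits at some position $j_n \geq 0$. Corollary~\ref{cor:bounds-value} bounds the most-significant-digit position of $\bfeta^{(n)}$ uniformly in $n$ (since $\norm{z_n}$ is bounded), so there is a constant $J$ with $\bfeta^{(n)} \in \wNAFsetellinf{J+1}$ and $j_n \in \set{0, 1, \ldots, J}$. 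Since $\set{0, \ldots, J}$ and $\cD^\bullet$ are finite, pigeonhole combined with the compactness of $\wNAFsetellinf{J+1}$ (Proposition~\ref{pro:naf-compact}) yields a convergent subsequence $\bfeta^{(n_k)} \to \bfeta^*$ along which $j_{n_k} = j^*$ and $\eta^{(n_k)}_{j_{n_k}} = d$ are constantly some $j^* \geq 0$ and some $d \neq 0$. By continuity of $\NAFvaluename$ (Proposition~\ref{pro:value-continuous}) we have $\NAFvalue{\bfeta^*} = z$, and convergence in the \wNAF{}-metric forces $\eta^*_{j^*} = d \neq 0$, so $\bfeta^*$ is the required expansion with non-trivial integer part.

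I expect the ``$\Leftarrow$'' direction to be the main obstacle, since the naive tile choice $(k, \bfxi) = (w-1, \bfxi_I)$ fails unless $\NAFvalue{\bfxi_I}$ is already divisible by $\Phi^{w-1}$. Adapting $k$ to the least non-zero position $m$ of $\bfxi_I$ and verifying the side condition ``$k \neq K + w - 1$ implies $\xi_0 \neq 0$'' is the technical heart; the observation that the \wNAF{}-gap around position~$m$ supplies precisely the zero-block needed for the factorisation makes everything line up with an admissible tile.
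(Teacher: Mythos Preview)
Your proof is correct. Both directions use precisely the tools one expects: for ``$\Rightarrow$'' you run the standard compactness/continuity argument (approximate $z$ from outside $\cF$, use Corollary~\ref{cor:bounds-value} to bound left-lengths, extract a convergent subsequence via Proposition~\ref{pro:naf-compact}, and pass to the limit with Proposition~\ref{pro:value-continuous}); for ``$\Leftarrow$'' you place $z$ simultaneously in the tile $(w-1,\bfzero)=\cF$ and in a second admissible tile of Corollary~\ref{cor:complex-plane-tiling}, forcing $z$ onto the common boundary. The paper itself does not spell out the argument here, deferring instead to the imaginary-quadratic case in~\cite{Heuberger-Krenn:2012:wnaf-analysis}; your write-up is exactly the straightforward generalisation that is meant.

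One small remark on presentation: in the ``$\Leftarrow$'' direction the assertion $z''\in\cF$ deserves one more line. You correctly observe that the \wNAF{} gap $\xi_{m-1}=\dots=\xi_{m-w+1}=0$ covers the fractional positions $-1,\dots,\tilde k-w+1$ when $\tilde k=m<w-1$; it is then worth saying explicitly that the sequence $\bigl(\xi_{j+\tilde k-w+1}\bigr)_{j\le -1}$, padded by zeros at non-negative indices, is itself a \wNAF{} in $\wNAFsetinf$ with value $z''=\Phi^{w-1-\tilde k}\NAFvalue{0\bfldot\bfxi_F}$. This is immediate from the \wNAF{} property of $\bfxi$, but making it explicit removes any doubt that the second tile is genuinely admissible. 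Your final paragraph already identifies this as the technical heart, so you clearly see the issue; just promote the verification from commentary into the proof body.
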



\begin{proof}
  The proof is a straightforward generalisation of the proof of the quadratic
  case in~\cite{Heuberger-Krenn:2012:wnaf-analysis}.
\end{proof}


The following proposition deals with the Hausdorff dimension of the boundary of
$\cF$.


\begin{proposition}\label{pro:boundary-fund-dom-dim-upper}
  For the Hausdorff dimension of the boundary of the fundamental domain we get
  $\hddim \boundary*{\cF} < n$.
\end{proposition}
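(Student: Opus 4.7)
The plan is to exploit the iterated function system $\ifs{f_\vartheta}{\vartheta\in\cD}$ from Remark~\ref{rem:ifs}, whose invariant set is $\cF$ and which satisfies Moran's open set condition with open set $\interior*{\cF}$. Iterating this IFS yields, for each $k\geq1$, a decomposition of $\cF$ as a union of pieces $f_\omega(\cF)$ of diameter $\Oh{\rho^{-k}}$, where $\omega$ ranges over compositions of total ``effective length'' $k$ (each application of $f_0$ contributing $1$, each application of $f_\vartheta$ with $\vartheta\neq0$ contributing $w$). Using the prescribed cardinality of $\cD$ one checks the identity $\rho^{-n}+\rho^{n(w-1)}(\rho^n-1)\rho^{-wn}=1$, so the similarity dimension of this IFS equals $n$, and the number of pieces at level $k$ is $\rho^{kn}$.

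First I would establish that $\interior*{\cF}\neq\emptyset$. This follows from $\lmeas{\cF}>0$, read off the tiling of $\R^n$ by scaled copies of $\cF$ in Corollary~\ref{cor:complex-plane-tiling} (finitely many sizes of copies must cover the infinite-measure space $\R^n$), combined with Schief's theorem: an OSC self-similar set whose similarity dimension equals the ambient dimension has non-empty interior as soon as it has positive Lebesgue measure. Consequently there is a word $\omega_0$ with $f_{\omega_0}(\cF)\subseteq\interior*{\cF}$, and this piece is disjoint from $\boundary*{\cF}$.

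Next I would call a piece $f_\omega(\cF)$ a \emph{boundary piece} at level $k$ if it meets $\boundary*{\cF}$. By Proposition~\ref{pro:char-boundary}, such a piece must be adjacent, within the decomposition of $\cF$ or within the tiling of $\R^n$ from Corollary~\ref{cor:complex-plane-tiling}, to another piece not contained in $\cF$. A recursion on $k$ shows that the number $N_k$ of boundary pieces satisfies $N_k\leq c\mu^k\rho^{kn}$ for some $\mu<1$ and some constant $c>0$: at each refinement step a fixed positive fraction of the children of any boundary piece lies entirely in $\interior*{\cF}$ (via a self-similar copy of the interior piece $f_{\omega_0}(\cF)$), while the remaining children are bounded in number by a constant depending only on the finitely many neighbouring tiles of $\cF$ in the tiling of $\R^n$. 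Covering $\boundary*{\cF}$ by $N_k$ balls of radius $\Oh{\rho^{-k}}$ and letting $k\to\infty$ then gives $\hddim\boundary*{\cF}\leq n+\log\mu/\log\rho<n$.

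The main obstacle is the counting step. Because the IFS carries two different contraction ratios $\rho^{-1}$ and $\rho^{-w}$, the recursion for $N_k$ is not a clean one-step multiplicative relation; it is most naturally phrased as a graph-directed (Markov-type) system in which one tracks not only the current piece but also its pattern of adjacencies to neighbouring tiles. The required strict inequality $\mu<1$ then reduces to showing that the spectral radius of the transition matrix of this system is strictly smaller than $\rho^n$, which in turn rests on the existence of the strictly interior word $\omega_0$ produced in the first step.
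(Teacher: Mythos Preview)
Your approach via the IFS and Schief's theorem is a genuinely different route from the paper's. The paper never invokes Schief or the abstract existence of an interior word; instead it produces such a word explicitly. Lemma~\ref{lem:choosing-k0} together with Proposition~\ref{pro:char-boundary} shows that every $z\in\boundary*\cF$ admits a representation in $\wNAFsetinf$ avoiding the block $0^k$ (with $k=k_0+w-1$). The paper then covers $\boundary*\cF$ by the set $U$ of values of such $0^k$-avoiding expansions, counts $0^k$-avoiding words of length $j$ via an explicit automaton whose adjacency matrix is primitive, applies Perron--Frobenius to obtain $\card{U_j}\sim c(\sigma\rho^n)^j$, and verifies $\sigma<1$ by a direct evaluation of the generating-function denominator at $Z=\rho^{-n}$. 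This gives $\uboxdim\boundary*\cF\le n+\log_\rho\sigma<n$ without any appeal to $\interior*\cF\neq\emptyset$.

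Your programme is morally the same---boundary points avoid a pattern, and pattern-avoiding words are exponentially rare---but it trades the concrete $0^k$ for an abstract $\omega_0$, and as you yourself flag, the counting step is left undone. The claim that ``a fixed positive fraction of the children of any boundary piece lies entirely in $\interior*\cF$'' does not yield $N_k\le c\mu^k\rho^{nk}$ by a clean recursion: a single IFS step sends a level-$k$ piece to children at levels $k+1$ and $k+w$ simultaneously, and $\omega_0$ is generally not a single letter, so there is no one refinement step after which a fixed fraction of \emph{equal-size} children is interior. The follow-up remark about ``finitely many neighbouring tiles'' conflates the external $\R^n$-tiling of Corollary~\ref{cor:complex-plane-tiling} with the internal IFS decomposition and does not supply the needed bound. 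What your last paragraph correctly identifies---a weighted transfer matrix whose spectral radius must be shown to lie strictly below $\rho^n$---is precisely what the paper carries out, for the specific choice $\omega_0=0^k$, with the strict inequality checked by hand. So your outline is reasonable, but the gap you name is real, and the paper's explicit automaton computation is exactly the missing ingredient.
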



The idea of this proof is similar to a proof in Heuberger and
Prodinger~\cite{Heuberger-Prodinger:2006:analy-alter}, and it is a
generalisation of the one given in~\cite{Heuberger-Krenn:2012:wnaf-analysis}.


\begin{proof}
  Set $k\colonequals k_0+w-1$ with $k_0$ from Lemma~\vref{lem:choosing-k0}. For
  $j\in\N$ define
  \begin{equation*}
    U_j \colonequals  \set*{\bfxi\in\wNAFsetellell{0}{j}}{
      \text{$\xi_{-\ell}\xi_{-(\ell+1)}\ldots\xi_{-(\ell+k-1)} \neq 0^k$ 
        for all $\ell\in\set{1,\dots,j-k+1}$}}.
  \end{equation*}
  The elements of $U_j$, or more precisely the digits from index~$-1$ to~$-j$,
  can be described by the regular expression
  \begin{equation*}
    \left( \eps + \sum_{d\in\cD^\bullet} \sum_{\ell=0}^{w-2} 0^\ell d \right)
    \left( \sum_{d\in\cD^\bullet} \sum_{\ell=w-1}^{k-1} 0^\ell d \right)^\bfast
    \left( \sum_{\ell=0}^{k-1} 0^\ell \right).
  \end{equation*}
  This can be translated to the generating function
  \begin{equation*}
    \f{G}{Z} = \sum_{j\in\N} \card*{U_j} Z^j
    = \left( 1 + \card*{\cD^\bullet} \sum_{\ell=0}^{w-2} Z^{\ell+1} \right)
    \frac{1}{1 - \card*{\cD^\bullet} \sum_{\ell=w-1}^{k-1} Z^{\ell+1}}
    \left( \sum_{\ell=0}^{k-1} Z^\ell \right)
  \end{equation*}
  used for counting the number of elements in $U_j$. Rewriting yields
  \begin{equation*}
    \f{G}{Z} = \frac{1-Z^k}{1-Z} 
    \frac{1 + (\card*{\cD^\bullet} - 1) Z - \card*{\cD^\bullet} Z^w}{
      1 - Z - \card*{\cD^\bullet} Z^w + \card*{\cD^\bullet} Z^{k+1}},
  \end{equation*}
  and we set
  \begin{equation*}
    \f{q}{Z} \colonequals  
    1 - Z - \card*{\cD^\bullet} Z^w + \card*{\cD^\bullet} Z^{k+1}.
  \end{equation*}

  \begin{figure}
    \centering
    \includegraphics{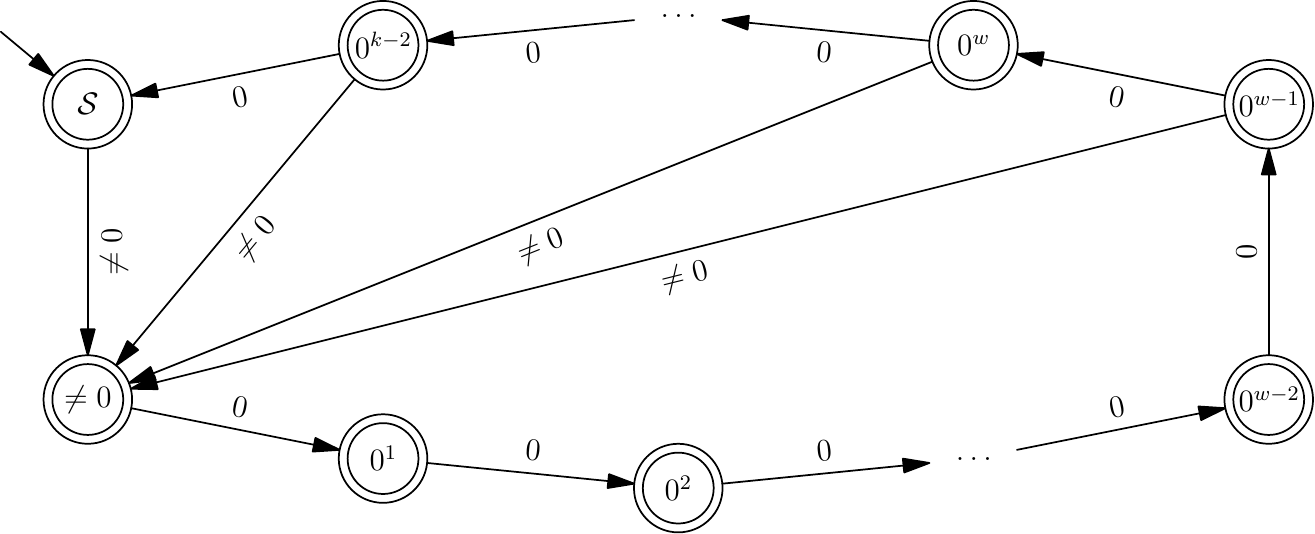}
    \caption[Automaton recognising $\bigcup_{j\in\N} \wt{U}_j$]{Automaton $\cA$
      recognising $\bigcup_{j\in\N} \wt{U}_j$ from right to left, see proof of
      Proposition~\ref{pro:boundary-fund-dom-dim-upper}. The state $\cS$ is the
      starting state, all states are valid end states. An edge marked with
      $\neq0$ means one edge for each non-zero digit in the digit set $\cD$. The
      state $\neq0$ means that there was an non-zero digit read, a state
      $0^\ell$ means that $\ell$ zeros have been read.}
    \label{fig:uj-automata}
  \end{figure}
  
  Now we define
  \begin{equation*}
    \wt{U}_j \colonequals  \set*{\bfxi \in U_j}{\xi_{-j} \neq 0}
  \end{equation*}
  and consider $\wt{U} \colonequals \bigcup_{j\in\N} \wt{U}_j$. Suppose
  $w\geq2$. The \wNAF{}s in that set, or more precisely the finite strings from
  index $-1$ to the smallest index of a non-zero digit, will be recognised by
  the automaton $\cA$ which is shown in Figure~\ref{fig:uj-automata} and reads
  its input from right to left. It is easy to see that the underlying directed
  graph $G_\cA$ of the automaton $\cA$ is strongly connected, therefore its
  adjacency matrix $M_\cA$ is irreducible. Since there are cycles of length $w$
  and $w+1$ in the graph and $\f{\gcd}{w,w+1}=1$, the adjacency matrix is
  primitive. Thus, using the Perron-Frobenius theorem we obtain
  \begin{align*}
    \card*{\wt{U}_j} &= \card{\text{walks in $G_\cA$ of length $j$ from starting
        state $\cS$ to some other state}} \\
    &= \begin{pmatrix} 1 & 0 & \dots & 0 \end{pmatrix}
    M_\cA^j
    \begin{pmatrix} 1 \\ \vdots \\ 1 \end{pmatrix}
    = \wt{c} \left(\sigma \rho^n \right)^j \left(1+\Oh{s^j}\right)
  \end{align*}
  for a $\wt{c}>0$, a $\sigma>0$, and an $s$ with $0 \leq s < 1$. Since the
  number of \wNAF{}s of length $j$ is $\Oh{\rho^{nj}}$, see
  Theorem~\vref{th:w-naf-distribution}, we get $\sigma\leq1$.
  
  We clearly have
  \begin{equation*}
    U_j = \biguplus_{\ell=j-k+1}^j \wt{U}_\ell,
  \end{equation*}
  so we get 
  \begin{equation*}
    \card*{U_j} = \coefficient{Z^j} \f{G}{Z}
    = c \left(\sigma\rho^n\right)^j \left( 1 + \Oh{s^j} \right)
  \end{equation*}
  for some constant $c>0$.
 
  To rule out $\sigma=1$, we insert the ``zero'' $\rho^{-n}$ in
  $\f{q}{Z}$. We obtain
  \begin{align*}
    \f{q}{\rho^{-n}} 
    &= 1 - \rho^{-n} - \card*{\cD^\bullet} \rho^{-nw} 
    + \card*{\cD^\bullet} \rho^{-n(k+1)} \\
    &= 1 - \rho^{-n} 
    - \rho^{n(w-1)}\left(\rho^n-1\right) \rho^{-nw} 
    + \rho^{n(w-1)}\left(\rho^n-1\right) \rho^{-n(k+1)} \\
    &= \left(\rho^n-1\right) \rho^{n(w-k-2)} > 0,
  \end{align*}
  where we used the cardinality of $\cD^\bullet$ from our set-up in
  Section~\ref{sec:set-up} and $\rho>1$. Therefore we get $\sigma<1$. It is
  easy to check that the result for $\card*{U_j}$ holds in the case $w=1$,
  too.

  Define
  \begin{equation*}
    U \colonequals  \set*{\NAFvalue{\bfxi}}{
      \text{$\bfxi\in\wNAFsetinf$ with 
        $\xi_{-\ell}\xi_{-(\ell+1)}\ldots\xi_{-(\ell+k-1)} \neq 0^k$ 
        for all $\ell \geq 1$}}.
  \end{equation*}
  We want to cover $U$ with hypercubes. Let $C\subseteq\R^n$ be the closed
  paraxial hypercube with centre $0$ and width $2$. Using
  Proposition~\vref{pro:upper-bound-fracnafs} yields
  \begin{equation*}
    U \subseteq 
    \bigcup_{z\in\NAFvalue{U_j}} \left(z + B_U\rho^{-j} C\right)
  \end{equation*}
  for all $j\in\N$, i.e., $U$ can be covered with $\card*{U_j}$ boxes of size
  $2B_U\rho^{-j}$. Thus we get for the upper box dimension, cf.\
  Edgar~\cite{Edgar:2008:measur},
  \begin{equation*}
     \uboxdim U \leq \lim_{j\to\infty} 
     \frac{\log \card*{U_j}}{- \log (2B_U\rho^{-j})}.
  \end{equation*}
  Inserting the cardinality $\card*{U_j}$ from above, using the logarithm to
  base $\rho$ and $0 \leq s < 1$ yields
  \begin{equation*}
    \uboxdim U \leq \lim_{j\to\infty}  
    \frac{\log_\rho c + j \log_\rho (\sigma\rho^n)
      + \log_\rho (1 + \Oh{s^j}) }{j + \Oh{1}}
    = n + \log_\rho \sigma.
  \end{equation*}
  Since $\sigma<1$, we get $\uboxdim U < 2$.

  Now we will show that $\boundary*{\cF} \subseteq U$. Clearly $U \subseteq
  \cF$, so the previous inclusion is equivalent to $\cF \setminus U \subseteq
  \interior{\cF}$. So let $z \in \cF \setminus U$. Then there is a
  $\bfxi\in\wNAFsetinf$ such that $z=\NAFvalue{\bfxi}$ and $\bfxi$ has a block
  of at least $k$ zeros somewhere on the right hand side of the
  $\Phi$\nbd-point. Let $\ell$ denote the starting index of this block, i.e.,
  \begin{equation*}
    \bfxi=0\bfldot\underbrace{\xi_{-1}\ldots\xi_{-(\ell-1)}}_{=: \bfxi_A}
    0^k\xi_{-(\ell+k)}\xi_{-(\ell+k+1)}\ldots.
  \end{equation*}
  Let $\bfvartheta =
  \bfvartheta_I\bfldot\bfvartheta_A\vartheta_{-\ell}\vartheta_{-(\ell+1)}\ldots
  \in \wNAFsetfininf$ with $\NAFvalue{\bfvartheta}=z$. We have
  \begin{equation*}
    z = \NAFvalue{0\bfldot\bfxi_A} + \Phi^{-\ell-w} z_\xi
    = \NAFvalue{\bfvartheta_I\bfldot\bfvartheta_A} 
    + \Phi^{-\ell-w} z_\vartheta
  \end{equation*}
  for appropriate $z_\xi$ and $z_\vartheta$. By Lemma~\vref{lem:choosing-k0},
  all expansions of $z_\xi$ are in $\wNAFsetinf$. Thus all expansions of
  \begin{equation*}
    \NAFvalue{\bfvartheta_I\bfvartheta_A}
    + \Phi^{-(w-1)} z_\vartheta
    - \NAFvalue{\bfxi_A} 
    = \Phi^{\ell-1} z - \NAFvalue{\bfxi_A}
    = \Phi^{-(w-1)} z_\xi
  \end{equation*}
  start with $0.0^{w-1}$, since our choice of $k$ is $k_0+w-1$. As the unique
  \wNAF{} of $\NAFvalue{\bfvartheta_I\bfvartheta_A} - \NAFvalue{\bfxi_A}$
  concatenated with any \wNAF{} of $\Phi^{-(w-1)}z_\vartheta$ gives rise to
  such an expansion, we conclude that $\NAFvalue{\bfvartheta_I\bfvartheta_A} -
  \NAFvalue{\bfxi_A} = 0$ and therefore $\bfvartheta_I=\bfzero$ and
  $\bfvartheta_A=\bfxi_A$.  So we conclude that all representations of $z$ as a
  \wNAF{} have to be of the form $0\bfldot\bfxi_A0^{w-1}\bfeta$ for some
  \wNAF{} $\bfeta$. Thus, by using Proposition~\ref{pro:char-boundary}, we get
  $z\not\in\boundary*{\cF}$ and therefore $z\in\interior{\cF}$.

  Until now we have proved
  \begin{equation*}
    \uboxdim \boundary*{\cF} \leq \uboxdim U < n.
  \end{equation*}
  Because the Hausdorff dimension of a set is at most its upper box dimension,
  cf.\ Edgar~\cite{Edgar:2008:measur} again, the desired result follows.
\end{proof}


\section{Cell Rounding Operations}
\label{sec:cell-rounding-op}

\let\origvarhexagon\varhexagon
\renewcommand\varhexagon{{T}}

In this section we define operators working on subsets of the
space~$\R^n$. These will use the lattice $\Lambda$ and the tiling $T$. They
will be a very useful concept to prove Theorem~\ref{thm:countdigits}.


\begin{definition}[Cell Rounding Operations]\label{def:round-v}
  Let $B\subseteq\R^n$ and $j\in\Z$. We define the \emph{cell packing of $B$}
  (``floor $B$'')
  \begin{align*}
    \floorV{B} &\colonequals   \bigcup_{\substack{z \in \Lambda \\ T_z \subseteq  B}} T_z 
    & &\text{and} &
    \floorV[j]{B} &\colonequals  \f{\Phi^{-j}}{\floorV{\Phi^j B}},
    \intertext{the \emph{cell covering of $B$} (``ceil $B$'')}
    \ceilV{B} &\colonequals  \closure{\floorV{B^C}^C}
    & &\text{and} &
    \ceilV[j]{B} &\colonequals  \f{\Phi^{-j}}{\ceilV{\Phi^j B}}, 
    \intertext{the \emph{fractional cells of $B$}}
    \fracpartV{B} &\colonequals  B \setminus \floorV{B}
    & &\text{and} &
    \fracpartV[j]{B} &\colonequals  \f{\Phi^{-j}}{\fracpartV{\Phi^j B}},
    \intertext{the \emph{cell covering of the boundary of $B$}}
    \boundaryV{B} &\colonequals  \closure{\ceilV{B} \setminus \floorV{B}}
    & &\text{and} &
    \boundaryV[j]{B} &\colonequals  \f{\Phi^{-j}}{\boundaryV{\Phi^j B}},
    \intertext{the \emph{cell covering of the lattice points inside $B$}}
    \coverV{B} &\colonequals  \bigcup_{\substack{z\in B \cap \Lambda}} T_z
    & &\text{and} &
    \coverV[j]{B} &\colonequals  \f{\Phi^{-j}}{\coverV{\Phi^j B}},
    \intertext{and the \emph{number of lattice points inside $B$} as}
    \cardV{B} &\colonequals  \card{B \cap \Lambda}
    & &\text{and} &
    \cardV[j]{B} &\colonequals  \cardV{\Phi^j B}.
  \end{align*}
\end{definition}


For the cell covering of a set $B$ an alternative, perhaps more intuitive
description can be given by
\begin{equation*}
  \ceilV{B} \colonequals   
  \bigcup_{\substack{z \in \Lambda \\ T_z \cap B \neq \emptyset}} T_z.
\end{equation*}


The following proposition deals with some basic properties that will be
helpful when working with those operators.


\begin{proposition}[Basic Properties of Cell Rounding Operations]
  \label{pro:round-v-basic-prop}

  Let $B\subseteq\R^n$ and $j\in\Z$.
  \begin{enumerate}[(a)]
    
  \item \label{enu:round-v-basic-prop:incl}
    We have the inclusions
      \begin{equation*}
        \floorV[j]{B} \subseteq B \subseteq \closure{B} \subseteq \ceilV[j]{B}
      \end{equation*}
      and
      \begin{equation*}
        \floorV[j]{B} \subseteq \coverV[j]{B} \subseteq \ceilV[j]{B}.    
      \end{equation*}
    For $B' \subseteq \R^n$ with $B \subseteq B'$ we get $\floorV[j]{B}
    \subseteq \floorV[j]{B'}$, $\coverV[j]{B} \subseteq \coverV[j]{B'}$ and
    $\ceilV[j]{B} \subseteq \ceilV[j]{B'}$, i.e., monotonicity with respect to
    inclusion.

  \item \label{enu:round-v-basic-prop:frac-boundary}
    The inclusion
    \begin{equation*}
      \fracpartV[j]{B} \subseteq \boundaryV[j]{B}
    \end{equation*}
    holds.

  \item \label{enu:round-v-basic-prop:boundary} We have $\boundary*{B}
    \subseteq \boundaryV[j]{B}$ and for each cell $T'$ in $\boundaryV[j]{B}$ we
    have $T' \cap \boundary*{B} \neq \emptyset$.
    
  \item \label{enu:round-v-basic-prop:card} 
    For $B' \subseteq \R^n$ with $B'$ disjoint from $B$, we get
    \begin{equation*}
      \cardV[j]{B \cup B'} = \cardV[j]{B} + \cardV[j]{B'},
    \end{equation*}
    and therefore the number of lattice points operation is monotonic with
    respect to inclusion, i.e., for $B'' \subseteq \R^n$ with $B'' \subseteq B$
    we have $\cardV[j]{B''} \leq \cardV[j]{B}$. Further we get
    \begin{equation*}
      \cardV[j]{B} 
      = \cardV[j]{\coverV[j]{B}}
      = \abs{\det\Phi}^j \frac{\lmeas{\coverV[j]{B}}}{d_\Lambda}.
    \end{equation*}
    
  \end{enumerate}
\end{proposition}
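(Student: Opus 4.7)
The plan is to unpack each assertion directly from Definition~\ref{def:round-v}, using only that $T$ is closed, that $\ballc{0}{r}\subseteq T\subseteq\ballc{0}{R}$ (so $0\in T$), and that distinct tiles overlap only in boundary sets of Lebesgue measure zero. Since every operator carries $\Phi^{-j}$ as a pure conjugation by $\Phi^{\pm j}$, it suffices to argue the case $j=0$; all statements then transfer to arbitrary~$j$ by replacing $B$ with $\Phi^j B$.

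For part~\itemref{enu:round-v-basic-prop:incl}, the chain $\floorV{B}\subseteq B\subseteq\closure{B}\subseteq\ceilV{B}$ is immediate: the floor is a union of tiles already contained in $B$, and $\ceilV{B}=\closure{\floorV{B^C}^C}$ together with $\floorV{B^C}\subseteq B^C$ gives $\closure{B}\subseteq\ceilV{B}$. For $\floorV{B}\subseteq\coverV{B}\subseteq\ceilV{B}$, use $0\in T$ to obtain $z\in T_z$ for all $z\in\Lambda$: any tile $T_z\subseteq B$ contributes $z\in B\cap\Lambda$, whence $T_z\subseteq\coverV{B}$; and any $T_z$ with $z\in B\cap\Lambda$ satisfies $T_z\cap B\supseteq\{z\}$, so it is one of the tiles in the alternative description $\ceilV{B}=\bigcup_{T_z\cap B\neq\emptyset}T_z$ noted just after Definition~\ref{def:round-v}. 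Monotonicity in $B$ then follows directly from the union representations of all three operators.

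For~\itemref{enu:round-v-basic-prop:frac-boundary}, $\fracpartV{B}=B\setminus\floorV{B}\subseteq\ceilV{B}\setminus\floorV{B}\subseteq\boundaryV{B}$. For~\itemref{enu:round-v-basic-prop:boundary}, given $p\in\boundary*{B}$ pick a tile $T_z$ whose interior accumulates at~$p$; every neighbourhood of $p$ meets both $B$ and $B^C$, so $T_z\cap B\neq\emptyset$ (hence $T_z\subseteq\ceilV{B}$) and $T_z\not\subseteq B$, and sequences in $\interior*{T_z}\cap B^C\subseteq\ceilV{B}\setminus\floorV{B}$ converging to~$p$ show $p\in\boundaryV{B}$. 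Conversely, a tile $T_z$ appearing in the defining union of $\boundaryV{B}$ satisfies both $T_z\cap B\neq\emptyset$ and $T_z\cap B^C\neq\emptyset$; the closed sets $T_z\cap\closure{B}$ and $T_z\cap\closure{B^C}$ cover $T_z$, so connectedness of the tile (standard in the intended applications) forces them to overlap, producing a point in $T_z\cap\boundary*{B}$.

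For~\itemref{enu:round-v-basic-prop:card}, additivity is $\card{(B\cup B')\cap\Lambda}=\card{B\cap\Lambda}+\card{B'\cap\Lambda}$ for disjoint $B,B'$, and monotonicity follows. The identity $\cardV{B}=\cardV{\coverV{B}}$ reflects a bijection between lattice points and the tiles centred at them: each $z\in B\cap\Lambda$ lies in $\coverV{B}$, and any further lattice point of $\coverV{B}$ can only sit on a tile boundary and is assigned back to the unique tile it itself centres. Finally $\coverV{B}$ is a disjoint union (modulo measure-zero boundaries) of $\cardV{B}$ translates of $T$, giving $\lmeas{\coverV{B}}=\cardV{B}\cdot d_\Lambda$; inserting the change of variables $\lmeas{\Phi^{-j}A}=\abs{\det\Phi}^{-j}\lmeas{A}$ produces the displayed formula. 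The only real (and mild) obstacle is the boundary-bookkeeping in the counting identity, which is absorbed by the centring convention just described; everything else is direct from the definitions.
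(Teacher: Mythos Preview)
Your overall strategy is exactly what the paper does (it simply cites the quadratic case and says ``straightforward generalisation''), and your reduction to $j=0$ via conjugation by $\Phi^{\pm j}$ is the right organising device. Two spots deserve tightening.

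In part~\itemref{enu:round-v-basic-prop:boundary}, forward direction, your sentence ``every neighbourhood of $p$ meets both $B$ and $B^C$, so $T_z\cap B\neq\emptyset$ and $T_z\not\subseteq B$'' does not follow: the points of $B$ (respectively $B^C$) near $p$ need not lie in the particular tile $T_z$ you chose. A clean fix is to split cases. If $p\in B^C$, then $p\in\closure{B}\setminus B\subseteq\ceilV{B}\setminus\floorV{B}$ directly. If $p\in B$, take a sequence $p_k\to p$ with $p_k\in B^C$; by local finiteness of the tiling (which follows from $T\subseteq\ballc{0}{R}$) pass to a subsequence lying in a single tile $T_{z'}$, so $p\in T_{z'}$ by closedness; then $p\in T_{z'}\cap B$ gives $T_{z'}\subseteq\ceilV{B}$, while $p_k\in B^C\subseteq\floorV{B}^C$ gives $p_k\in\ceilV{B}\setminus\floorV{B}$ and hence $p\in\boundaryV{B}$. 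For the converse you invoke connectedness of $T$; this is indeed not among the paper's hypotheses, and without it the statement can fail (a disconnected tile could have one component in $\interior*{B}$ and another in $\interior*{B^C}$). Your parenthetical ``standard in the intended applications'' is fair---the referenced proof is for Voronoi cells, which are convex---but it is an extra assumption.

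In part~\itemref{enu:round-v-basic-prop:card}, your bookkeeping for $\cardV{B}=\cardV{\coverV{B}}$ is murkier than it needs to be. The key fact is simply $T\cap\Lambda=\{0\}$: if $y\in T\cap\Lambda$ with $y\neq 0$, then $0\in T\cap T_{-y}\subseteq\boundary*{T}$, contradicting $0\in\interior*{T}$ from $\ballc{0}{r}\subseteq T$. Hence $T_z\cap\Lambda=\{z\}$ for every $z\in\Lambda$, and $\coverV{B}\cap\Lambda=\bigcup_{z\in B\cap\Lambda}(T_z\cap\Lambda)=B\cap\Lambda$ exactly, with no boundary ambiguity to absorb.
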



\begin{proof}
  The proof is a straightforward generalisation of the proof for
  Voronoi-tilings in the quadratic case in~\cite{Heuberger-Krenn:2012:wnaf-analysis}.  
\end{proof}


We will need some more properties concerning cardinality. We want to know the
number of points inside a region after using one of the operators. Especially we
are interested in the asymptotic behaviour, i.e., if our region becomes scaled
very large. The following proposition provides information about that.
 

\begin{proposition}\label{pro:set-nu}
  Let $U \subseteq \R^n$ bounded, measurable, and such that
  \begin{equation*}
    \cardV{\boundaryV{\Psi U}} = \Oh{\abs{\det\Psi}^{\delta/n}}
  \end{equation*}
  for $\abs{\det\Psi}\to\infty$ with maps $\Psi\colon\R^n\to\R^n$ and a fixed
  $\delta\in\R$ with $\delta>0$.

  \begin{enumerate}[(a)]
    
  \item \label{enu:set-nu:floor-ceil-cover}
    We get that each of $\cardV{\floorV{\Psi U}}$, $\cardV{\ceilV{\Psi U}}$,
    $\cardV{\coverV{\Psi U}}$ and $\cardV{\Psi U}$ equals
    \begin{equation*}
      \abs{\det\Psi} \frac{\lmeas{U}}{d_\Lambda} + \Oh{\abs{\det\Psi}^{\delta/n}}.
    \end{equation*}
    In particular, let $N\in\R$, $N>0$, and set $\Psi =
    \f{\operatorname{diag}}{N,\dots,N}$, which we identify with $N$. Then we
    get that each one of $\cardV{\floorV{NU}}$, $\cardV{\ceilV{NU}}$,
    $\cardV{\coverV{NU}}$ and $\cardV{NU}$ equals
    \begin{equation*}
      N^n \frac{\lmeas{U}}{d_\Lambda} + \Oh{N^\delta}.
    \end{equation*}
    
  \item \label{enu:set-nu:difference} Let $N\in\R$, $N>0$, and set $\Psi =
    \f{\operatorname{diag}}{N,\dots,N}$, which we identify with $N$. Then we
    get
    \begin{equation*}
      \cardV{(N+1)U \setminus NU} = \Oh{N^\delta}.
    \end{equation*}

  \end{enumerate}
\end{proposition}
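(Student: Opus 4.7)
For part~(a), the plan is to sandwich all four quantities between $\cardV{\floorV{\Psi U}}$ and $\cardV{\ceilV{\Psi U}}$, using Proposition~\ref{pro:round-v-basic-prop}\itemref{enu:round-v-basic-prop:incl} together with the identity $\cardV{\Psi U}=\cardV{\coverV{\Psi U}}$ from~\itemref{enu:round-v-basic-prop:card}, and then to show that both extremes already agree with the claimed asymptotic. For a union of cells $B$ one has $\cardV{B}=\lambda(B)/d_\Lambda$ (a consequence of Proposition~\ref{pro:round-v-basic-prop}\itemref{enu:round-v-basic-prop:card}), which combined with $\lambda(\Psi U)=\abs{\det\Psi}\,\lambda(U)$ gives
\begin{equation*}
  \lambda(\floorV{\Psi U}) \leq \abs{\det\Psi}\,\lambda(U) \leq \lambda(\ceilV{\Psi U}).
\end{equation*}
The gap between these two is at most $\lambda(\boundaryV{\Psi U})\leq d_\Lambda\,\cardV{\boundaryV{\Psi U}}=\Oh{d_\Lambda\,\abs{\det\Psi}^{\delta/n}}$ by the hypothesis on~$U$. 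Dividing through by $d_\Lambda$ yields the asymptotic for $\cardV{\floorV{\Psi U}}$ and $\cardV{\ceilV{\Psi U}}$, and hence for all four quantities; the scaling specialisation is then just the case $\abs{\det\Psi}=N^n$.

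For part~(b), I plan to avoid applying part~(a) twice and subtracting (which would leave a spurious $\Oh{N^{n-1}}$ term coming from $(N+1)^n-N^n$), and instead show directly that the shell $(N+1)U\setminus NU$ sits inside a bounded neighbourhood of $\boundary*{NU}$ whose thickness does not grow with~$N$. Fix $M$ with $U\subseteq\ballc{0}{M}$. For any $x\in(N+1)U$, the point $\tfrac{N}{N+1}\,x$ lies in $NU$, and the segment joining $\tfrac{N}{N+1}\,x$ to $x$ has length $\norm{x}/(N+1)\leq M$. Hence if $x\notin NU$ this segment must cross $\boundary*{NU}$, so $x$ lies within distance~$M$ of $\boundary*{NU}\subseteq\boundaryV{NU}$ (using Proposition~\ref{pro:round-v-basic-prop}\itemref{enu:round-v-basic-prop:boundary}). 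Consequently $(N+1)U\setminus NU$ lies in the $M$-neighbourhood of $\boundaryV{NU}$; this neighbourhood is covered by at most $c$ cells per cell of $\boundaryV{NU}$, where $c$ depends only on~$M$ and the diameter of~$T$. Therefore $\cardV{(N+1)U\setminus NU}\leq c\,\cardV{\boundaryV{NU}}=\Oh{N^\delta}$.

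The main obstacle is the geometric step in part~(b): the key non-trivial observation is that although the dilation factor $(N+1)/N\to 1$ makes the two rescalings almost coincide, the displacement of corresponding points $x$ and $\tfrac{N}{N+1}\,x$ stays bounded by~$M$ rather than growing with~$N$, which forces the shell into a thin strip around $\boundary*{NU}$ and allows the bound by $\cardV{\boundaryV{NU}}$. Part~(a), by contrast, is essentially bookkeeping once one combines the sandwich of Proposition~\ref{pro:round-v-basic-prop}\itemref{enu:round-v-basic-prop:incl} with the measure-count identity from~\itemref{enu:round-v-basic-prop:card}.
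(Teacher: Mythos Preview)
Your proposal is correct. The paper itself does not give a proof here but simply defers to the quadratic case in~\cite{Heuberger-Krenn:2012:wnaf-analysis}, so there is no detailed argument to compare against; what you wrote is a clean self-contained proof that would serve well in place of that deferral.

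Two small remarks. In part~(a), the identity you invoke, ``for a union of cells $B$ one has $\cardV{B}=\lambda(B)/d_\Lambda$'', is not literally what Proposition~\ref{pro:round-v-basic-prop}\itemref{enu:round-v-basic-prop:card} states (that gives $\cardV{B}=\lambda(\coverV{B})/d_\Lambda$); the two can differ by lattice points lying on cell boundaries. But any such extra point $z'$ has $T_{z'}\subseteq\boundaryV{\Psi U}$, so the discrepancy is $\Oh{\cardV{\boundaryV{\Psi U}}}$ and is absorbed into the error term. It would be worth saying this explicitly. In part~(b), your key observation---that for $x\in(N+1)U$ the point $\tfrac{N}{N+1}x$ lies in $NU$ at distance at most $M$, forcing the shell into a fixed-width tube around $\partial(NU)$---is exactly the right idea and avoids the pitfall you identified of subtracting two applications of~(a). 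Note also that this argument nowhere assumes $NU\subseteq(N+1)U$, which is good since $U$ is merely bounded and measurable.
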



\begin{proof}
  Again, the proof is a straightforward generalisation of the proof for
  Voronoi-tilings in the quadratic case
  in~\cite{Heuberger-Krenn:2012:wnaf-analysis}.
\end{proof}


Note that $\delta = n-1$ if $U$ is, for example, a ball or a polyhedron.


\section{The Characteristic Sets}
\label{sec:sets-w_eta}


In this section we define characteristic sets for a digit at a specified
position in the \wNAF{} expansion and prove some basic properties of them. Those
will be used in the proof of Theorem~\ref{thm:countdigits}.


\begin{definition}[Characteristic Sets]\label{def:w-wk-beta}
  Let $\eta\in\cD^\bullet$. For $j\in\N_0$ define 
  \begin{equation*}
    \cW_{\eta,j} \colonequals 
    \set*{\NAFvalue{\bfxi}}{\text{
        $\bfxi\in\wNAFsetellell{0}{j+w}$ with $\xi_{-w}=\eta$}}.
  \end{equation*}
  We call $\coverV[j+w]{\cW_{\eta,j}}$ the \emph{$j$th approximation of the
    characteristic set for $\eta$}, and we define
  \begin{equation*}
    W_{\eta,j} \colonequals  \fracpartLambda{\coverV[j+w]{\cW_{\eta,j}}}.
  \end{equation*}
  Further we define the \emph{characteristic set for $\eta$}
  \begin{equation*}
    \cW_\eta \colonequals  
    \set*{\NAFvalue{\bfxi}}{\text{
        $\bfxi\in \wNAFsetinf$ with $\xi_{-w}=\eta$}}
  \end{equation*}
  and 
  \begin{equation*}
    W_\eta \colonequals  \fracpartLambda{\cW_\eta}.
  \end{equation*}

  For $j\in\N_0$ we set
  \begin{equation*}
    \beta_{\eta,j} \colonequals  \lmeas{\coverV[j+w]{\cW_{\eta,j}}} - \lmeas{\cW_\eta}.
  \end{equation*}
\end{definition}


\begin{figure}
  \centering \subfloat[$\cW_{\eta,j}$ for a lattice originating from $\tau$ with
  $\tau^2-3\tau+3=0$, $w=2$ and $j=7$]{
    \includegraphics[width=0.43\linewidth]{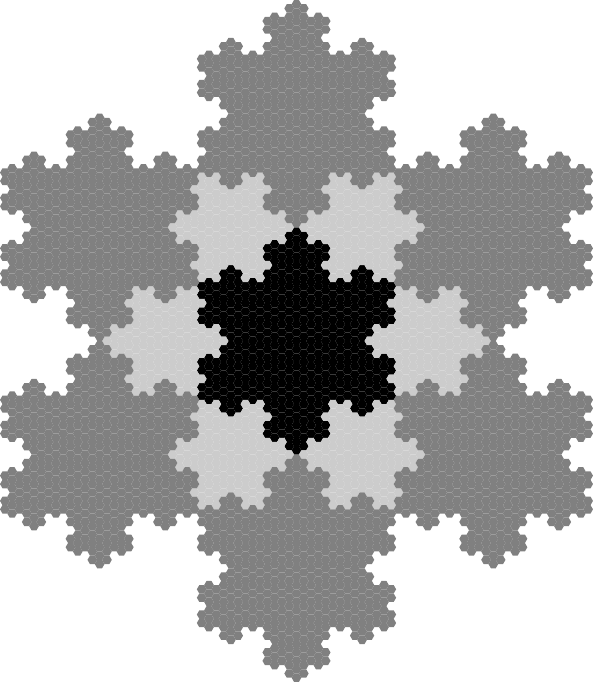}
    \label{fig:w-weta:3}}
  \hfill
  \subfloat[$\cW_{\eta,j}$ for a lattice coming from $\tau$ with
  $\tau^2-2\tau+2=0$, $w=4$ and $j=11$]{
    \includegraphics[width=0.5\linewidth]{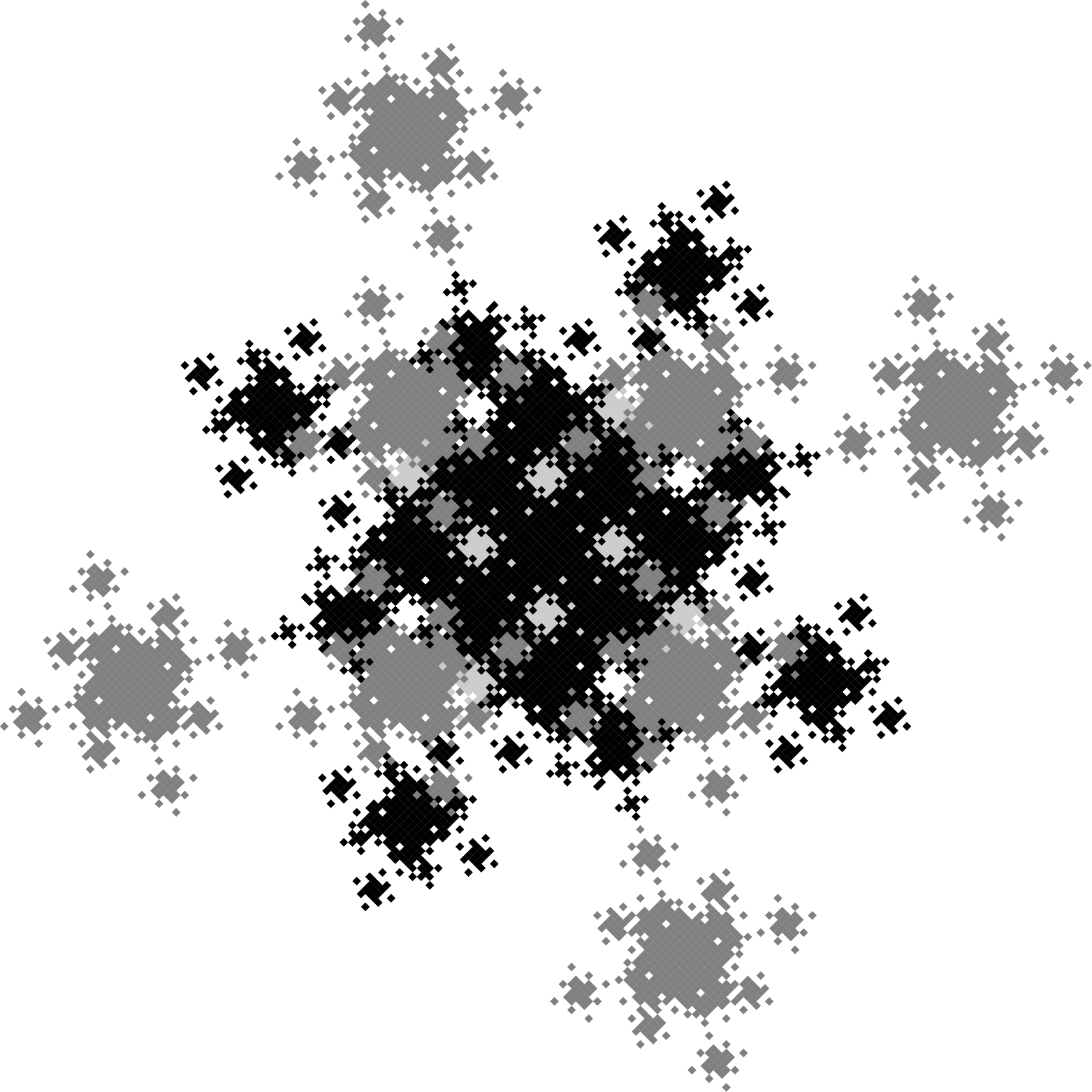}
    \label{fig:w-weta:2}}

  \caption[Characteristic sets $\cW_\eta$]{Fundamental domains and
    characteristic sets $\cW_\eta$. Each figure shows a fundamental domain. The
    light-gray coloured parts represent the approximations $\cW_{\eta,j}$ of
    the characteristic sets~$\cW_\eta$.}
  \label{fig:w-weta}
\end{figure}


Note that sometimes the set $W_\eta$ will also be called \emph{characteristic
  set for $\eta$}, and analogously for the set $W_{\eta,j}$. In
Figure~\ref{fig:w-weta} some of these characteristic sets, more precisely some
approximations of the characteristic sets, are shown.

The following proposition deals with some properties of those defined sets.


\begin{proposition}[Properties of the Characteristic Sets]
  \label{pro:prop-of-w}

  Let $\eta\in\cD^\bullet$.
  \begin{enumerate}[(a)]
  
  \item \label{enu:prop-of-w:fund-domain} We have 
    \begin{equation*}
      \cW_\eta = \eta \tau^{-w} + \Phi^{-2w+1} \cF.
    \end{equation*}
  
  \item \label{enu:prop-of-w:compact}  The set $\cW_\eta$ is compact.

  \item \label{enu:prop-of-w:w-is-union} We get
    \begin{equation*}
      \cW_\eta 
      = \closure{\bigcup_{j\in\N_0} \cW_{\eta,j}}
      = \closure{\lim_{j\to\infty} \cW_{\eta,j}}.
    \end{equation*}

  \item \label{enu:prop-of-w:lim} The set $\coverV[j+w]{\cW_{\eta,j}}$ is indeed
    an approximation of $\cW_\eta$, i.e., we have
    \begin{equation*}
      \cW_\eta 
      = \closure{\liminf_{j\in\N_0} \coverV[j+w]{\cW_{\eta,j}}}
      = \closure{\limsup_{j\in\N_0} \coverV[j+w]{\cW_{\eta,j}}}.
    \end{equation*}

  \item \label{enu:prop-of-w:int-in-liminf} We have $\interior*{\cW_\eta}
    \subseteq \liminf_{j\in\N_0} \coverV[j+w]{\cW_{\eta,j}}$.

  \item \label{enu:prop-of-w:w-in-v} We get $\cW_\eta - \Phi^{-w}\eta \subseteq
    T$, and for $j\in\N_0$ we obtain $\coverV[j+w]{\cW_{\eta,j}} - 
    \Phi^{-w}\eta \subseteq T$.

  \item \label{enu:prop-of-w:lmeas} For the Lebesgue measure of the
    characteristic set we obtain $\lmeas{\cW_\eta} = \lmeas{W_\eta}$ and for its
    approximation $\lmeas{\coverV[j+w]{\cW_{\eta,j}}} = \lmeas{W_{\eta,j}}$.

  \item \label{enu:prop-of-w:area-wj}
    Let $j\in\N_0$, then 
    \begin{equation*}
      \lmeas{\coverV[j+w]{\cW_{\eta,j}}} = d_\Lambda E + \Oh{\mu^j}
    \end{equation*}
    with $E$ and $\mu<1$ from Theorem~\vref{th:w-naf-distribution}.    
    
  \item \label{enu:prop-of-w:area-w}
    The Lebesgue measure of $W_\eta$ is
    \begin{equation*}
      \lmeas{W_\eta} = d_\Lambda E,
    \end{equation*}
    again with $E$ from Theorem~\ref{th:w-naf-distribution}.

  \item \label{enu:prop-of-w:beta}
    Let $j\in\N_0$. We get
    \begin{equation*}
      \beta_{\eta,j} = \int_{x \in T} \f{\left( 
          \indicator*{W_{\eta,j}} - \indicator*{W_\eta} \right)}{x} \dd x
      = \Oh{\mu^j}.
    \end{equation*}
    Again $\mu<1$ can be found in Theorem~\ref{th:w-naf-distribution}.

  \end{enumerate}
\end{proposition}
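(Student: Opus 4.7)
The plan is to build everything on top of (a), which realises $\cW_\eta$ as an affine image of the fundamental domain. Unwinding the definition, every $\bfxi\in\wNAFsetinf$ with $\xi_{-w}=\eta$ has $\xi_{-1}=\dots=\xi_{-(w-1)}=0$ automatically and $\xi_{-(w+1)}=\dots=\xi_{-(2w-1)}=0$ forced by the \wNAF{}\nobreakdash-condition; the tail from position $-2w$ onwards is a free element of $\wNAFsetinf$. Hence $\NAFvalue{\bfxi}=\Phi^{-w}\eta+\Phi^{-(2w-1)}\NAFvalue{\bfxi'}$ with $\bfxi'\in\wNAFsetinf$, giving (a). Compactness (b) then follows because $\cF$ is compact (Proposition~\ref{pro:fund-domain-compact}) and $\cW_\eta$ is its continuous affine image. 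For (f), Proposition~\ref{pro:upper-bound-fracnafs}\itemref{enu:upper-bound:in-v} gives $\cF\subseteq\Phi^{2w-1}T$, so (a) yields $\cW_\eta-\Phi^{-w}\eta\subseteq T$; the same bound applied to truncations handles the approximation $\coverV[j+w]{\cW_{\eta,j}}$.

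The convergence statements (c), (d), (e) describe how the $\cW_{\eta,j}$ exhaust $\cW_\eta$. Because $\cW_{\eta,j}$ is exactly the set of values of depth-$(j+w)$ truncations of the expansions in (a), continuity of $\NAFvaluename$ (Proposition~\ref{pro:value-continuous}) together with compactness of $\wNAFsetinf$ (Proposition~\ref{pro:naf-compact}) yields (c). Statement (d) is (c) plus the fact that $\coverV[j+w]{\cdot}$ enlarges each point by a cell of diameter $\Oh{\rho^{-(j+w)}}$, which vanishes. For (e), any $z\in\interior*{\cW_\eta}$ has an expansion by (a); its truncation at depth $j+w$ lies in $\cW_{\eta,j}$ and is within $\Oh{\rho^{-(j+w)}}$ of $z$, so eventually the surrounding covering cell is contained in the interior.

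For (g), the containment from (f) places $\cW_\eta$ inside the single translate $\Phi^{-w}\eta+T$ of the tiling cell, and since every translate of $T$ is again a fundamental domain for $\Lambda$, the map $\fracpartLambda$ restricted to $\cW_\eta$ is a piecewise lattice translation which is injective off a null set, hence measure\nobreakdash-preserving; the argument for $W_{\eta,j}$ is identical. The core quantitative step is (h), which I expect to demand the most care. By Proposition~\ref{pro:round-v-basic-prop}\itemref{enu:round-v-basic-prop:card} and $\abs{\det\Phi}=\rho^n$,
\begin{equation*}
  \lmeas{\coverV[j+w]{\cW_{\eta,j}}}
  =\rho^{-n(j+w)}\,d_\Lambda\,\cardV{\Phi^{j+w}\cW_{\eta,j}},
\end{equation*}
and uniqueness of integer \wNAF{}s turns this cardinality into the number of $\bfxi\in\wNAFsetellell{0}{j+w}$ with $\xi_{-w}=\eta$. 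By (a) this reduces to counting \wNAF{}s of length $j-w+1$, which by Theorem~\ref{th:w-naf-distribution} equals $\rho^{n(j+1)}/((\rho^n-1)w+1)+\Oh{(\mu\rho^n)^j}$; inserting this produces the main term $d_\Lambda E$ with error $\Oh{\mu^j}$. Finally, (i) follows by passing to the limit in (h) via (c), (d) and (g), and (j) is immediate from $\int_T(\indicator*{W_{\eta,j}}-\indicator*{W_\eta})\dd x=\lmeas{W_{\eta,j}}-\lmeas{W_\eta}$ together with (g), (h), and (i).
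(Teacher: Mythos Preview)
Your outline is correct and matches what the paper intends: the paper itself gives no proof here, deferring entirely to the imaginary-quadratic case in Heuberger--Krenn~\cite{Heuberger-Krenn:2012:wnaf-analysis}, and your reconstruction is precisely the expected straightforward generalisation (decompose via~(a), pull compactness and the inclusion~(f) back from~$\cF$, count via Theorem~\ref{th:w-naf-distribution} for~(h), then take limits).

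One small correction in~(f): citing Proposition~\ref{pro:upper-bound-fracnafs}\itemref{enu:upper-bound:in-v} does not literally give $\cF\subseteq\Phi^{2w-1}T$, since that item yields $\NAFvalue{\bfeta}\in\Phi^{2w+J}T$ with $J\le-1$, and there is no general inclusion $\Phi^{2w+J}T\subseteq\Phi^{2w-1}T$ for $J<-1$. What you actually need is Proposition~\ref{pro:upper-bound-fracnafs}\itemref{enu:upper-bound:approx-in-v}, which is tailored exactly to this: it gives $\NAFvalue{0\bfldot\eta_{-1}\ldots\eta_{-\ell}}+\Phi^{-\ell}T\subseteq\Phi^{2w-1}T$ for every $\ell$, hence both $\cF\subseteq\Phi^{2w-1}T$ (by letting $\ell\to\infty$ and using closedness of $T$) and, after shifting by $\Phi^{-w}\eta$ and rescaling by $\Phi^{-(2w-1)}$, the containment $\coverV[j+w]{\cW_{\eta,j}}-\Phi^{-w}\eta\subseteq T$ directly for each $j$. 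With that fix your argument goes through.
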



\begin{proof}
  The proof is a straightforward generalisation of the proof in~\cite{Heuberger-Krenn:2012:wnaf-analysis}.
\end{proof}


We can also determine the Lebesgue measure of the fundamental domain $\cF$
defined in Section~\ref{sec:fundamental-domain}.


\begin{remark}[Lebesgue Measure of the Fundamental Domain]
  \label{rem:meas-fund-domain}

  We get
  \begin{equation*}
    \lmeas{\cF} = \rho^{n(2w-1)} E d_\Lambda
    = \frac{\rho^{nw} d_\Lambda}{(\rho^n-1)w+1},
  \end{equation*}
  using~\itemref{enu:prop-of-w:fund-domain} and~\itemref{enu:prop-of-w:area-w}
  from Proposition~\ref{pro:prop-of-w} and $E$ from
  Theorem~\vref{th:w-naf-distribution}.
\end{remark}


The next lemma makes the connection between the \wNAF{}s of elements of the
lattice $\Lambda$ and the characteristic sets $W_{\eta,j}$.


\begin{lemma}\label{lem:equiv-digit-char-set}
  Let $\eta\in\cD^\bullet$, $j\geq0$. Let $z\in\Lambda$ and let
  $\bfxi\in\wNAFsetfinell{0}$ be its \wNAF{}. Then the following statements are
  equivalent:
  \begin{enumequivalences}
  \item \label{enu:e-d-cs:1} 
    The $j$th digit of $\bfxi$ equals $\eta$.
  \item \label{enu:e-d-cs:2} 
    The condition $\fracpartLambda{\Phi^{-(j+w)}z} \in W_{\eta,j}$ holds.
  \item \label{enu:e-d-cs:3}
    The inclusion $\fracpartLambda{\Phi^{-(j+w)}T_z} \subseteq W_{\eta,j}$ holds.
  \end{enumequivalences}
\end{lemma}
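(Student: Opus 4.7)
My plan is to prove the implications in the cyclic order $(1)\Rightarrow(3)\Rightarrow(2)\Rightarrow(1)$ using the shift identity
\[
  \Phi^{-(j+w)}z \;=\; A + B,\qquad
  A := \sum_{k \geq j+w}\Phi^{k-(j+w)}\xi_k \in \Lambda,\qquad
  B := \sum_{k=0}^{j+w-1}\Phi^{k-(j+w)}\xi_k,
\]
where $B = \NAFvalue{\bfxi^{(F)}}$ with $\bfxi^{(F)} := 0\bfldot\xi_{j+w-1}\cdots\xi_0 \in \wNAFsetellell{0}{j+w}$. The crucial observation is that $\xi^{(F)}_{-w}=\xi_j$, so by Lemma~\ref{lem:value-injective} the condition $\xi_j=\eta$ is equivalent to $B \in \cW_{\eta,j}$. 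Moreover $\fracpartLambda{\Phi^{-(j+w)}z} = \fracpartLambda{B}$ since $A\in\Lambda$, and unravelling the definitions gives $\coverV[j+w]{\cW_{\eta,j}} = \cW_{\eta,j} + \Phi^{-(j+w)}T$ because $\Phi^{j+w}\cW_{\eta,j}\subseteq\Lambda$.

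For $(1)\Rightarrow(3)$, assume $\xi_j=\eta$; then $B\in\cW_{\eta,j}$, and any $x=z+t\in T_z$ satisfies $\Phi^{-(j+w)}x - A = B + \Phi^{-(j+w)}t \in \cW_{\eta,j} + \Phi^{-(j+w)}T = \coverV[j+w]{\cW_{\eta,j}}$, hence $\fracpartLambda{\Phi^{-(j+w)}x}\in W_{\eta,j}$. The implication $(3)\Rightarrow(2)$ is immediate since $z\in T_z$ (because $0\in T$, as $\ballc{0}{r}\subseteq T$). For $(2)\Rightarrow(1)$, pick $v'\in\cW_{\eta,j}$ and $t'\in\Phi^{-(j+w)}T$ with $\Phi^{-(j+w)}z - v' - t' \in \Lambda$; multiplying by $\Phi^{j+w}$ and using $z,\Phi^{j+w}v'\in\Lambda$ forces $\Phi^{j+w}t'\in T\cap\Lambda\subseteq\set{0}\cup\boundary*{T}$ by the tiling property from the set-up. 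Modulo the boundary case, $t'=0$ and hence $z\equiv\Phi^{j+w}v'\pmod{\Phi^{j+w}\Lambda}$. I then invoke the technical fact that the $j$th digit of the integer \wNAF{} of a lattice element depends only on its residue modulo $\Phi^{j+w}\Lambda$: because $\cD$ is a reduced residue digit set, the digit at position~$0$ of the \wNAF{} of $z'\in\Lambda$ equals $0$ when $z'\in\Phi\Lambda$ and the unique element of $\cD^\bullet$ representing $z'$ modulo $\Phi^w\Lambda$ otherwise, so it depends only on $z'$ modulo $\Phi^w\Lambda$; induction on the position (each step replacing $z'$ by $\Phi^{-1}(z'-\xi_0)$ and shrinking the relevant modulus by one factor of $\Phi$) propagates the agreement through position~$j$. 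Since $v'=\NAFvalue{\bfxi''}$ for some $\bfxi''$ with $\xi''_{-w}=\eta$, shifting by $j+w$ makes $\Phi^{j+w}v'$ the value of an integer \wNAF{} with $\eta$ at position~$j$, and we conclude $\xi_j=\eta$.

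The main obstacle I anticipate is the boundary case $\Phi^{j+w}t'\in\boundary*{T}\cap\Lambda$ with $t'\neq0$: the operator $\coverV{\cdot}$ is defined via the closed tile $T$ rather than the canonical disjoint fundamental domain $\wt{T}$, so a boundary lattice point $u=\Phi^{j+w}t'$ could in principle come from a tile attached to a different $y'\in\Phi^{j+w}\cW_{\eta,j}$. I plan to handle it by a boundary-disambiguation argument, replacing $(v',t')$ by the equivalent pair $(v'+\Phi^{-(j+w)}u,\,t'-\Phi^{-(j+w)}u)$ to shift onto a neighbouring tile, thereby reducing to the interior case. The inductive residue-class claim for \wNAF{}-digits used above is then routine bookkeeping, relying on the uniqueness of the \wNAF{}-expansion guaranteed by Remark~\ref{rem:digitset:tiling}.
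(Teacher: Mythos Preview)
Your cyclic scheme $(1)\Rightarrow(3)\Rightarrow(2)\Rightarrow(1)$ via the splitting $\Phi^{-(j+w)}z=A+B$ is exactly the natural argument, and since the paper only cites the quadratic case as a ``straightforward generalisation'' without giving details, your write-up is essentially what is intended. The implications $(1)\Rightarrow(3)$ and $(3)\Rightarrow(2)$ are clean, and the induction showing that the $j$th \wNAF{}-digit depends only on the residue modulo $\Phi^{j+w}$ is correct and is the right tool for $(2)\Rightarrow(1)$.

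The one genuine gap is your handling of the boundary case $u=\Phi^{j+w}t'\in(\boundary T\cap\Lambda)\setminus\{0\}$. Your proposed fix, replacing $(v',t')$ by $(v'+\Phi^{-(j+w)}u,\,0)$, requires the new first component to lie in $\cW_{\eta,j}$, i.e.\ that $y+u\in\Phi^{j+w}\cW_{\eta,j}$. But membership of $\Phi^{j+w}c=y+u$ in $\coverV{\Phi^{j+w}\cW_{\eta,j}}$ only tells you that \emph{some} tile $T_{y''}$ with $y''\in\Phi^{j+w}\cW_{\eta,j}$ contains this point; when the point sits on a tile boundary there is no reason the adjacent centre $y+u$ must also belong to $\Phi^{j+w}\cW_{\eta,j}$. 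So ``shifting onto a neighbouring tile'' does not reduce to the interior case as claimed.

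Two remarks that let you close the gap. First, the boundary case is vacuous whenever $T\cap\Lambda=\{0\}$; this holds for the Voronoi tiling (if $0\neq v\in V\cap\Lambda$ then $\|v\|\leq\|v-v\|=0$) and is the situation actually used in the application. Second, for the general $T$ of the set-up you can argue contrapositively using Proposition~\ref{pro:prop-of-w}\itemref{enu:prop-of-w:w-in-v}: that result confines $\coverV[j+w]{\cW_{\eta,j}}$ to a single translate $\Phi^{-w}\eta+T$, which gives enough control over which tiles can border the cover to rule out the bad boundary scenario. Either route is preferable to the unsupported tile-shift.
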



\begin{proof}
  The proof is a straightforward generalisation of the proof of the quadratic
  case in~\cite{Heuberger-Krenn:2012:wnaf-analysis}.
\end{proof}


\section{Counting the Occurrences of a non-zero Digit in a Region}
\label{sec:counting-digits-region}


In this section we will prove our main result on the asymptototic number of
occurrences of a digit in a given region.

Note that Iverson's notation
$\iverson{\ifdissertationkrenn\mathrm{expr}\else\var{expr}\fi}=1$ if
$\ifdissertationkrenn\mathrm{expr}\else\var{expr}\fi$ is true and
$\iverson{\ifdissertationkrenn\mathrm{expr}\else\var{expr}\fi}=0$ otherwise,
cf.\ Graham, Knuth and Patashnik~\cite{Graham-Knuth-Patashnik:1994}, will be
used.


\begin{theorem}[Counting Theorem]\label{thm:countdigits}
  Let $0 \neq \eta \in \cD$ and $N\in\R$ with $N>0$. Further let $U \subseteq
  \R^n$ be measurable with respect to the Lebesgue measure and bounded with $U
  \subseteq \ball{0}{d}$ for a finite~$d$, and set $\delta$ such that
  $\cardV{\boundaryV{NU}} = \Oh{N^\delta}$ with $1\leq\delta<n$. We denote the
  number of occurrences of the digit $\eta$ in all integer width\nbd-$w$
  non-adjacent forms with value in the region~$NU$ by
  \begin{equation*}
    \f{Z_\eta}{N}
    = \sum_{z \in NU\cap\Lambda} \sum_{j\in\N_0} 
    \iverson*{$j$th digit of $z$ in its \wNAF{}-expansion equals $\eta$}.
  \end{equation*}
  Then we get  
  \begin{equation*}
      \f{Z_\eta}{N} 
      = N^n \lmeas{U} E \log_\rho N
      + N^n \f{\psi_\eta}{\log_\rho N}
      + \Oh{N^{\alpha} \log_\rho N} 
      + \Oh{N^\delta \log_\rho N},
  \end{equation*}
  in which the expressions described below are used. The Lebesgue measure
  on~$\R^n$ is denoted by $\lambda$. We have the constant of the expectation
  \begin{equation*}
    E = \frac{1}{\rho^{n(w-1)}((\rho^n-1)w+1)},    
  \end{equation*}
  cf.\ Theorem~\vref{th:w-naf-distribution}. Then there is the function
  \begin{equation*}
    \f{\psi_\eta}{x} = \f{\psi_{\eta,\cM}}{x} 
    + \f{\psi_{\eta,\cP}}{x} + \f{\psi_{\eta,\cQ}}{x},
  \end{equation*}
  where
  \begin{equation*}  
    \f{\psi_{\eta,\cM}}{x} 
    = \lmeas{U} \left(J_0 + 1 - \fracpart{x} \right) E,
  \end{equation*}
  \begin{equation*}
    \f{\psi_{\eta,\cP}}{x} =
    \frac{\rho^{n(J_0-\fracpart{x})}}{d_\Lambda} \sum_{j=0}^\infty 
    \int_{y \in \fracpartV[j-w]{\Phi^{-\floor{x}-J_0}\rho^x U}}
    \left( \indicator{W}{\fracpartLambda{\Phi^{j-w}y}} - \lmeas{W} \right) \dd y,
  \end{equation*}
  and
  \begin{equation*}
    \psi_{\eta,\cQ} = \frac{\lmeas{U}}{d_\Lambda^2}
    \sum_{j=0}^\infty \beta_j.
  \end{equation*}
  We have $\alpha = n + \log_\rho \mu < n$, with $\mu = \left( 1 +
    \frac{1}{\rho^n w^3} \right)^{-1} < 1$, and
  \begin{equation*}
    J_0 =  \floor{ \log_\rho d - \log_\rho B_L } + 1 
  \end{equation*}
  with the constant $B_L$ of Proposition~\vref{pro:lower-bound-fracnafs}.

  Further, let
  \begin{equation*}
    \Phi = Q \diag{\rho e^{i\theta_1}, \dots, \rho e^{i\theta_n}} Q^{-1},
  \end{equation*}
  where $Q$ is a regular matrix. If there is a $p\in\N$ such that
  \begin{equation*}
    Q \diag{e^{i\theta_1 p}, \dots, e^{i\theta_n p}} Q^{-1} U = U,
  \end{equation*}
  then $\psi_\eta$ is \periodic{p}. Moreover, if $\psi_\eta$ is \periodic{p}
  for some $p\in\N$, then it is also continuous.
\end{theorem}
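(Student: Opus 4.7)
The plan is to follow Delange's method, as successfully deployed in the quadratic case of~\cite{Heuberger-Krenn:2012:wnaf-analysis}. First, I would use Lemma~\ref{lem:equiv-digit-char-set} and exchange the order of summation to obtain
\begin{equation*}
\f{Z_\eta}{N} = \sum_{j \geq 0} \sum_{z \in NU \cap \Lambda}
\iverson*{\fracpartLambda{\Phi^{-(j+w)}z} \in W_{\eta,j}}.
\end{equation*}
By Corollary~\ref{cor:bounds-value} together with $U \subseteq \ball{0}{d}$, the most significant digit of any $z \in NU$ sits at position at most $J_N := \floor{\log_\rho N} + J_0$, so only $0 \leq j \leq J_N$ contribute to the outer sum. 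The identity $J_N + 1 = \log_\rho N + (J_0 + 1 - \fracpart{\log_\rho N})$ is what converts the number of contributing levels into the logarithm plus the piecewise-linear periodic summand $\psi_{\eta,\cM}$.

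For each fixed $j$, I would convert the inner lattice sum into a volume integral using Proposition~\ref{pro:round-v-basic-prop}\itemref{enu:round-v-basic-prop:card}, replacing $NU$ by its cell cover $\coverV{NU}$ at the cost of an $\Oh{N^\delta}$ boundary error (Proposition~\ref{pro:set-nu}). The resulting integrand, the characteristic function of the set $\Phi^{j+w}W_{\eta,j} + \Phi^{j+w}\Lambda$, is periodic with respect to the sublattice $\Phi^{j+w}\Lambda$, so the integral decomposes into a ``complete cells'' main piece contributing $\lmeas{NU}\,\lmeas{W_{\eta,j}}/d_\Lambda$ plus a fractional-cell remainder supported on $\fracpartV[j-w]{\Phi^{w}NU}$. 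Using Proposition~\ref{pro:prop-of-w}\itemref{enu:prop-of-w:area-wj} and~\itemref{enu:prop-of-w:beta}, I would then expand $\lmeas{W_{\eta,j}} = \lmeas{W_\eta} + \Oh{\mu^j}$ (with the precise correction measured by $\beta_{\eta,j}$) and replace $W_{\eta,j}$ by $W_\eta$ in the fractional-cell integrand with a further $\Oh{\mu^j}$ error in measure.

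Summing over $0 \leq j \leq J_N$ then assembles the three pieces of the formula. The main pieces collapse, via $\lmeas{W_\eta} = d_\Lambda E$, to $(J_N+1) N^n \lmeas{U} E = N^n \lmeas{U} E \log_\rho N + N^n \f{\psi_{\eta,\cM}}{\log_\rho N}$. The geometrically decaying $\beta_{\eta,j}$ contributions combine into the absolutely convergent series giving $\psi_{\eta,\cQ}$. The fractional-cell remainders, after the substitution $N = \rho^x$ and the natural rescaling by $\Phi^{-\floor{x}-J_0}$, reassemble into the series defining $\f{\psi_{\eta,\cP}}{x}$. The two error terms arise from $\Oh{\log_\rho N}$ many applications of the $\Oh{N^\delta}$ boundary bound, and from the tail of the $\Oh{\mu^j}$ discrepancies for $j > J_N$ multiplied by $N^n$, which yields $\Oh{N^n\mu^{J_N}\log_\rho N} = \Oh{N^\alpha\log_\rho N}$ with $\alpha = n + \log_\rho\mu < n$.

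The main obstacle I expect is establishing absolute convergence of the series defining $\psi_{\eta,\cP}$: its $j$th term involves the boundaries of both the scaled domain $\Phi^{-\floor{x}-J_0}\rho^x U$ and of $W_\eta$, so one must balance the $\rho^{-j}$-contraction against the accumulated boundary discrepancy, leveraging the sub-maximal Hausdorff-dimension bound of Proposition~\ref{pro:boundary-fund-dom-dim-upper}. For the periodicity claim, the hypothesis $Q\diag{e^{i\theta_1 p},\dots,e^{i\theta_n p}}Q^{-1}U = U$ is equivalent to $\rho^{-p}\Phi^p U = U$, hence the substitution $x \mapsto x+p$ in $\f{\psi_{\eta,\cP}}{x}$ merely transports the domain of integration by this rotation and each term is invariant; $\psi_{\eta,\cM}$ is already $1$-periodic and $\psi_{\eta,\cQ}$ constant, so $\psi_\eta$ is $p$-periodic. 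Continuity of a periodic $\psi_\eta$ finally follows because the only potential discontinuities of $\psi_{\eta,\cM}$ (the jumps at integers $x$) must be cancelled by matching jumps of $\psi_{\eta,\cP}$, this cancellation being forced by the boundedness of $N^{-n}(\f{Z_\eta}{N} - N^n\lmeas{U}E\log_\rho N) - \f{\psi_\eta}{\log_\rho N}$ as $N\to\infty$.
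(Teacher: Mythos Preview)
Your plan follows essentially the same route as the paper: Delange's method, the same bound $J=\floor{\log_\rho N}+J_0$ on the digit index, conversion of the lattice sum to an integral via the cell cover $\coverV{NU}$, and the same decomposition into a main piece, a fractional-cell (periodic) piece, and a $\beta_j$-correction piece. Two points in your sketch should be corrected.

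First, the convergence of the series defining $\psi_{\eta,\cP}$ does \emph{not} involve $\partial W_\eta$ and does not require Proposition~\ref{pro:boundary-fund-dom-dim-upper}. The integrand $\indicator{W}{\fracpartLambda{\Phi^{j-w}y}}-\lmeas{W}$ is uniformly bounded, so the $j$th term is controlled simply by the measure of the region of integration. That region is contained in $\boundaryV[j-w]{\Phi^{-\floor{x}-J_0}\rho^x U}$, which by the hypothesis $\cardV{\boundaryV{\Psi U}}=\Oh{\abs{\det\Psi}^{\delta/n}}$ consists of $\Oh{\rho^{(j-w)\delta}}$ cells, each of measure $\Oh{\rho^{-n(j-w)}}$. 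Hence the $j$th term is $\Oh{\rho^{(\delta-n)j}}$, summable because $\delta<n$. The Hausdorff-dimension bound on $\partial\cF$ plays no role anywhere in the proof of Theorem~\ref{thm:countdigits}.

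Second, your continuity argument does not work as stated. Knowing that $N^{-n}\f{Z_\eta}{N}-\lmeas{U}E\log_\rho N-\f{\psi_\eta}{\log_\rho N}\to 0$ only samples $\psi_\eta$ at single points $\log_\rho N$; it cannot by itself detect or rule out a jump. What is needed is to compare two nearby samples. The paper takes $M=\rho^m$ with $m\in x+p\Z$ (so that $\f{\psi_\eta}{\log_\rho M}=\f{\psi_\eta}{x}$ by periodicity) and $M-1$ (so that $\log_\rho(M-1)=m+L$ with $L\to 0^-$), and bounds $\f{Z_\eta}{M}-\f{Z_\eta}{M-1}$ by the number of lattice points in $MU\setminus(M-1)U$ times the maximal digit length, which is $\Oh{M^\delta\log_\rho M}$ by Proposition~\ref{pro:set-nu}\itemref{enu:set-nu:difference}. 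Dividing by $M^n$ and letting $m\to\infty$ along $x+p\Z$ then yields $\f{\psi_\eta}{x}-\lim_{\eps\to 0^-}\f{\psi_\eta}{x+\eps}=0$.
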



\begin{remark}\label{rem:main-term}
  Consider the main term of our result. When $N$ tends to infinity, we get the
  asymptotic formula
  \begin{equation*}
    Z_\eta \asymptotic N^n \lmeas{U} E \log_\rho N.
  \end{equation*}
  This result is not surprising, since intuitively the number of lattice points
  in the region $NU$ corresponds to the Lebesgue measure $N^n \lmeas{U}$ of
  this region, and each of those elements can be represented as an integer
  \wNAF{} with length about $\log_\rho N$. Therefore, using the expectation of
  Theorem~\vref{th:w-naf-distribution}, we get an explanation for this term.
\end{remark}


\begin{remark}\label{rem:delta2}
  If $\delta=n$ in the theorem, then the statement stays true, but degenerates
  to
  \begin{equation*}
    \f{Z_\eta}{N} = \Oh{N^n \log_{\abs\tau} N}.
  \end{equation*}
  This is a trivial result of Remark~\ref{rem:main-term}.
\end{remark}


The proof of Theorem~\ref{thm:countdigits} follows the ideas used by
Delange~\cite{Delange:1975:chiffres}. By Remark~\ref{rem:delta2} we restrict
ourselves to the case $\delta<n$.

We will use the following abbreviations. We omit the index $\eta$, i.e., we set
$\f{Z}{N} \colonequals \f{Z_\eta}{N}$, $W\colonequals W_\eta$ and
$W_j\colonequals W_{\eta,j}$, and further we set $\beta_j\colonequals
\beta_{\eta,j}$, cf.\ Proposition~\ref{pro:prop-of-w}.  By $\log$ we will
denote the logarithm to the base $\rho$, i.e., $\log x=\log_\rho x$. These
abbreviations will be used throughout the remaining section.


\begin{proof}[Proof of Theorem~\ref{thm:countdigits}]
  By assumption every element of $\Lambda$ is represented by a unique element
  of $\wNAFsetfinell{0}$. To count the occurrences of the digit $\eta$ in $NU$,
  we sum up~$1$ over all lattice points $z \in NU \cap \Lambda$ and for each
  $z$ over all digits in the corresponding \wNAF{} equal to $\eta$. Thus we get
  \begin{equation*}
    \f{Z}{N} 
    = \sum_{z \in NU \cap \Lambda}
    \sum_{j\in\N_0} \iverson*{$j$th digit of \wNAF{} of $z$ equals $\eta$}.
  \end{equation*}
  The inner sum over $j\in\N_0$ is finite, we will choose a large enough upper
  bound $J$ later in Lemma~\ref{lem:choosing-j}.

  Using
  \begin{equation*}
    \iverson*{$j$th digit of \wNAF{} of $z$ equals $\eta$}
    = \indicator{W_j}{\fracpartLambda{\Phi^{-j-w}z}}
  \end{equation*}
  from Lemma~\ref{lem:equiv-digit-char-set} yields
  \begin{equation*}
      \f{Z}{N} = \sum_{j=0}^J \sum_{z\in NU \cap \Lambda} 
      \indicator{W_j}{\fracpartLambda{\Phi^{-j-w}z}},
  \end{equation*}
  where additionally the order of summation was changed. This enables us to
  rewrite the sum over $z$ as an integral
  \begin{equation*}
    \begin{split}
      \f{Z}{N} &= \sum_{j=0}^J \sum_{z\in NU \cap \Lambda} 
      \frac{1}{\lmeas{T_z}} \int_{x\in T_z}
      \indicator{W_j}{\fracpartLambda{\Phi^{-j-w}x}} \dd x \\
      &= \frac{1}{\lmeas{T}} \sum_{j=0}^J \int_{x\in \coverV{NU}} 
      \indicator{W_j}{\fracpartLambda{\Phi^{-j-w}x}} \dd x.
    \end{split}
  \end{equation*}
  We split up the integrals into the ones over $NU$ and others over the
  remaining region and get
  \begin{equation*}
    \f{Z}{N} = \frac{1}{\lmeas{T}} \sum_{j=0}^J \int_{x \in NU} 
    \indicator{W_j}{\fracpartLambda{\Phi^{-j-w}x}} \dd x 
    + \f{\cF_\eta}{N},
  \end{equation*}
  in which $\f{\cF_\eta}{N}$ contains all integrals (with appropriate signs)
  over regions $\coverV{NU} \setminus NU$ and $NU \setminus \coverV{NU}$.

  By substituting $x = \Phi^J y$, $\dd x = \abs{\det\Phi}^J\dd y = \rho^{nJ}\dd
  y$ we obtain
  \begin{equation*}
    \f{Z}{N} = \frac{\rho^{nJ}}{\lmeas{T}} \sum_{j=0}^J 
    \int_{y \in \Phi^{-J}NU} \indicator{W_j}{\fracpartLambda{\Phi^{J-j-w}y}} \dd y  
    + \f{\cF_\eta}{N}.
  \end{equation*}
  Reversing the order of summation yields
  \begin{equation*}
    \f{Z}{N} = \frac{\rho^{nJ}}{\lmeas{T}} \sum_{j=0}^J 
    \int_{y \in \Phi^{-J}NU} 
    \indicator{W_{J-j}}{\fracpartLambda{\Phi^{j-w}y}} \dd y 
    + \f{\cF_\eta}{N}.
  \end{equation*}
  We rewrite this as
  \begin{equation*}
    \begin{split}
      \f{Z}{N} 
      &= \frac{\rho^{nJ}}{\lmeas{T}} (J+1) 
      \lmeas{W} \int_{y \in \Phi^{-J}NU} \dd y \\
      &\phantom{=}+ \frac{\rho^{nJ}}{\lmeas{T}} \sum_{j=0}^J \int_{y \in \Phi^{-J}NU} 
      \left( \indicator{W}{\fracpartLambda{\Phi^{j-w}y}} 
        - \lmeas{W} \right) \dd y \\
      &\phantom{=}+ \frac{\rho^{nJ}}{\lmeas{T}} \sum_{j=0}^J \int_{y \in \Phi^{-J}NU} 
      \left( \indicator{W_{J-j}}{\fracpartLambda{\Phi^{j-w}y}}
        - \indicator{W}{\fracpartLambda{\Phi^{j-w}y}} \right) \dd y \\
      &\phantom{=}+ \f{\cF_\eta}{N}.    
    \end{split}
  \end{equation*}
  With $\Phi^{-J}NU = \floorV[j-w]{\Phi^{-J}NU} \cup
  \fracpartV[j-w]{\Phi^{-J}NU}$ for each area of integration we get
  \begin{equation*}
    \f{Z}{N} 
    = \f{\cM_\eta}{N} 
    + \f{\cZ_\eta}{N} 
    + \f{\cP_\eta}{N} 
    + \f{\cQ_\eta}{N} 
    + \f{\cS_\eta}{N}
    + \f{\cF_\eta}{N},
  \end{equation*}
  in which $\cM_\eta$ is \emph{``The Main Part''}, see
  Lemma~\ref{lem:the-main-part},
  \begin{subequations}
    \label{eq:mainth:parts}
    \begin{equation}
      \label{eq:mainth:main-part}
      \f{\cM_\eta}{N} 
      = \frac{\rho^{nJ}}{\lmeas{T}} (J+1) \lmeas{W} 
      \int_{y \in \Phi^{-J}NU} \dd y,
    \end{equation}
    $\cZ_\eta$ is \emph{``The Zero Part''}, see Lemma~\ref{lem:the-zero-part},
    \begin{equation}
      \label{eq:mainth:zero-part}
      \f{\cZ_\eta}{N} 
      = \frac{\rho^{nJ}}{\lmeas{T}} \sum_{j=0}^J 
      \int_{y \in \floorV[j-w]{\Phi^{-J}NU}}
      \left( \indicator{W}{\fracpartLambda{\Phi^{j-w}y}} 
        - \lmeas{W} \right) \dd y,
    \end{equation}
    $\cP_\eta$ is \emph{``The Periodic Part''}, see
    Lemma~\ref{lem:the-periodic-part},
    \begin{equation}
      \label{eq:mainth:periodic-part}
      \f{\cP_\eta}{N} 
      = \frac{\rho^{nJ}}{\lmeas{T}} \sum_{j=0}^J 
      \int_{y \in \fracpartV[j-w]{\Phi^{-J}NU}}
      \left( \indicator{W}{\fracpartLambda{\Phi^{j-w}y}} 
        - \lmeas{W} \right) \dd y,
    \end{equation}
    $\cQ_\eta$ is \emph{``The Other Part''}, see
    Lemma~\ref{lem:the-other-part},
    \begin{equation}
      \label{eq:mainth:other-part}
      \f{\cQ_\eta}{N} 
      = \frac{\rho^{nJ}}{\lmeas{T}} \sum_{j=0}^J 
      \int_{y \in \floorV[j-w]{\Phi^{-J}NU}} 
      \f{\left( \indicator*{W_{J-j}} - \indicator*{W} 
        \right)}{\fracpartLambda{\Phi^{j-w}y}} \dd y,
    \end{equation}
    $\cS_\eta$ is \emph{``The Small Part''}, see
    Lemma~\ref{lem:the-small-part},
    \begin{equation}
      \label{eq:mainth:small-part}
      \f{\cS_\eta}{N}
      = \frac{\rho^{nJ}}{\lmeas{T}} \sum_{j=0}^J 
      \int_{y \in \fracpartV[j-w]{\Phi^{-J}NU}} 
      \f{\left( \indicator*{W_{J-j}} - \indicator*{W} 
        \right)}{\fracpartLambda{\Phi^{j-w}y}} \dd y
    \end{equation}
    and $\cF_\eta$ is \emph{``The Fractional Cells Part''}, see
    Lemma~\ref{lem:the-fractionalcells-part},
    \begin{equation}
      \label{eq:mainth:fraccells-part}
      \begin{split}
        \f{\cF_\eta}{N}
        &= \frac{1}{\lmeas{T}} \sum_{j=0}^J
        \int_{x \in \coverV{NU} \setminus NU} 
        \indicator{W_j}{\fracpartLambda{\Phi^{-j-w}x}} \dd x \\
        &\phantom{=}- \frac{1}{\lmeas{T}} \sum_{j=0}^J
        \int_{x \in NU \setminus \coverV{NU}} 
        \indicator{W_j}{\fracpartLambda{\Phi^{-j-w}x}} \dd x.
      \end{split}
    \end{equation}
  \end{subequations}
  To complete the proof we have to deal with the choice of $J$, see
  Lemma~\ref{lem:choosing-j}, as well as with each of the parts in
  \eqref{eq:mainth:parts}, see
  Lemmata~\ref{lem:the-main-part} to~\ref{lem:the-fractionalcells-part}. The
  continuity of $\psi_\eta$ is checked in Lemma~\vref{lem:psi-continous}.
\end{proof}


\begin{lemma}[Choosing $J$]\label{lem:choosing-j}
  Let $N \in \R_{\geq0}$. Then every \wNAF{} of $\wNAFsetfinell{0}$ with value
  in $NU$ has at most $J+1$ digits, where
  \begin{equation*}
    J = \floor{\log N} + J_0
  \end{equation*}
  with
  \begin{equation*}
    J_0 = \floor{ \log d - \log B_L } + 1
  \end{equation*}
  with $B_L$ of Proposition~\vref{pro:lower-bound-fracnafs}.
\end{lemma}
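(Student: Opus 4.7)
The plan is to bound the position of the most significant non-zero digit of any $\bfeta \in \wNAFsetfinell{0}$ with $\NAFvalue{\bfeta}\in NU$ by combining the lower bound of Proposition~\ref{pro:lower-bound-fracnafs} with the hypothesis $U\subseteq \ball{0}{d}$, and then a short floor-function manipulation will convert the resulting real bound into the claimed integer bound.

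First I would observe that if $\bfeta=\bfzero$ the statement holds trivially, so assume $\bfeta$ is non-zero and let $J'\in\N_0$ denote the position of its most significant digit. From $\NAFvalue{\bfeta}=z\in NU\subseteq N\ball{0}{d}$ we get $\norm{z}<Nd$. Proposition~\ref{pro:lower-bound-fracnafs} gives
\begin{equation*}
  \rho^{J'} B_L \leq \norm{z} < Nd,
\end{equation*}
which (recall $B_L>0$ by the existence condition~\eqref{eq:existence-condition}) rearranges to $J' < \log N + \log d - \log B_L$.

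Since $J'$ is an integer, this yields $J' \leq \floor{\log N + \log d - \log B_L}$. Applying the elementary inequality $\floor{x+y} \leq \floor{x} + \floor{y} + 1$ gives
\begin{equation*}
  J' \leq \floor{\log N} + \floor{\log d - \log B_L} + 1
  = \floor{\log N} + J_0 = J.
\end{equation*}
A $w$-NAF in $\wNAFsetfinell{0}$ with most significant digit at position $J'$ has digits only at positions $0,1,\dots,J'$, so it consists of at most $J'+1 \leq J+1$ digits, as required.

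There is no real obstacle here: the lemma is a direct consequence of the lower bound of Proposition~\ref{pro:lower-bound-fracnafs} and the scaling $U\subseteq\ball{0}{d}$, together with the floor inequality. The only point needing a moment's care is ensuring $B_L>0$ so that $\log B_L$ is finite, but this is exactly the content of the remark following Proposition~\ref{pro:lower-bound-fracnafs}.
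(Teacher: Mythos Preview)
Your proof is correct and follows essentially the same argument as the paper: both combine the lower bound $\rho^{J'}B_L\le\norm{z}$ from Proposition~\ref{pro:lower-bound-fracnafs} (the paper cites the equivalent Corollary~\ref{cor:bounds-value}) with $\norm{z}<Nd$, take logarithms, and apply the floor inequality $\floor{x+y}\le\floor{x}+\floor{y}+1$. Your version is in fact slightly more explicit about why $B_L>0$ and about the final digit count, but there is no substantive difference.
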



\begin{proof}
  Let $z \in NU$, $z\neq0$, with its corresponding \wNAF{}
  $\bfxi\in\wNAFsetfinell{0}$, and let $j\in\N_0$ be the largest index such
  that the digit $\xi_j$ is non-zero. By using
  Corollary~\vref{cor:bounds-value}, we conclude that
  \begin{equation*}
    \rho^j B_L \leq \norm{z} < Nd.
  \end{equation*}
  This means
  \begin{equation*}
    j < \log N + \log d - \log B_L,
  \end{equation*}
  and thus we have
  \begin{equation*}
    j \leq \floor{\log N + \log d - \log B_L} 
    \leq \floor{\log N} + \floor{\log d - \log B_L} + 1.
  \end{equation*}
  Defining the right hand side of this inequality as $J$ finishes the proof.
\end{proof}


\begin{remark}\label{rem:auxcalc-taujn}
  For the parameter used in the region of integration in the proof of
  Theorem~\ref{thm:countdigits} we get
  \begin{equation*}
    \abs{\det\left(\Phi^{-J} N\right)} = \Oh{1}.
  \end{equation*} 
  In particular, we get $\norm{\Phi^{-J} N} = \Oh{1}$.
\end{remark}


\begin{proof}
  We have
  \begin{equation*}
    \abs{\det\left(\Phi^{-J}N\right)} = \left(\rho^{-J}N\right)^n.
  \end{equation*}
  With $J$ of Lemma~\ref{lem:choosing-j} we obtain
  \begin{equation*}
    \rho^{-J} N = \rho^{-\floor{\log N}-J_0} \rho^{\log N}
    = \rho^{\log N - \floor{\log N} - J_0} 
    = \rho^{\fracpart{\log N} - J_0}.
  \end{equation*}
  Since $\rho^{\fracpart{\log N} - J_0}$ is bounded by $\rho^{1-J_0}$, it is
  $\Oh{1}$. Therefore $\det\left(\Phi^{-J}N\right)$ is $\Oh{1}$. Since
  $\norm{\Phi^{-1}} = \rho^{-1}$ we conclude that $\norm{\Phi^{-J} N}$ is
  $\Oh{1}$.
\end{proof}


\begin{lemma}[The Main Part]\label{lem:the-main-part}
  For \eqref{eq:mainth:main-part} in the proof of Theorem~\ref{thm:countdigits}
  we get
  \begin{equation*}
    \f{\cM_\eta}{N}
    = N^n \lmeas{U} E \log N
    + N^n \f{\psi_{\eta,\cM}}{\log N} 
  \end{equation*}
  with a \periodic{1} function $\psi_{\eta,\cM}$,
  \begin{equation*}  
    \f{\psi_{\eta,\cM}}{x} 
    = \lmeas{U} \left( J_0 + 1 - \fracpart{x} \right) E
  \end{equation*}
  and $E$ of Theorem~\vref{th:w-naf-distribution}.
\end{lemma}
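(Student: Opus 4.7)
The plan is to evaluate the expression for $\cM_\eta(N)$ directly; this lemma is essentially a bookkeeping calculation with no serious obstacle.

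First I would compute the integral by a linear change of variables. Since $\Phi^{-J}N$ acts on $\R^n$ with Jacobian determinant $\abs{\det(\Phi^{-J}N)} = \rho^{-nJ}N^n$, we have
\begin{equation*}
  \int_{y \in \Phi^{-J}NU} \dd y = \rho^{-nJ} N^n \lmeas{U}.
\end{equation*}
Substituting this into \eqref{eq:mainth:main-part} makes the factor $\rho^{nJ}$ cancel, so that
\begin{equation*}
  \f{\cM_\eta}{N} = \frac{(J+1)\lmeas{W}\,N^n\lmeas{U}}{\lmeas{T}}.
\end{equation*}

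Next I would identify the ratio $\lmeas{W}/\lmeas{T}$ with the expectation constant~$E$. By construction $\lmeas{T} = d_\Lambda$, and by Proposition~\ref{pro:prop-of-w}\itemref{enu:prop-of-w:area-w} we have $\lmeas{W_\eta} = d_\Lambda E$. Hence $\lmeas{W}/\lmeas{T} = E$, giving
\begin{equation*}
  \f{\cM_\eta}{N} = (J+1)\, E\, N^n \lmeas{U}.
\end{equation*}

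Finally I would separate the main term from the oscillatory remainder using the choice of $J$ from Lemma~\ref{lem:choosing-j}. Writing $J = \floor{\log N} + J_0 = \log N - \fracpart{\log N} + J_0$, we obtain
\begin{equation*}
  J+1 = \log N + \bigl(J_0 + 1 - \fracpart{\log N}\bigr),
\end{equation*}
so that
\begin{equation*}
  \f{\cM_\eta}{N} = N^n \lmeas{U} E \log N + N^n \lmeas{U} E \bigl(J_0 + 1 - \fracpart{\log N}\bigr).
\end{equation*}
Setting $\f{\psi_{\eta,\cM}}{x} = \lmeas{U}\bigl(J_0 + 1 - \fracpart{x}\bigr) E$ yields the claimed identity, and since $\fracpartname$ is $1$-periodic, so is $\psi_{\eta,\cM}$. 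No step presents a genuine difficulty; the only thing to be careful about is correctly applying Lemma~\ref{lem:choosing-j} so that the $\log N$ main term and the periodic fluctuation are cleanly separated.
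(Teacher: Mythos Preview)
Your proof is correct and follows essentially the same approach as the paper: compute the integral as $\rho^{-nJ}N^n\lmeas{U}$, use $\lmeas{W}=\lmeas{T}E$ from Proposition~\ref{pro:prop-of-w}\itemref{enu:prop-of-w:area-w}, and then split $J+1=\floor{\log N}+J_0+1$ via $\floor{x}=x-\fracpart{x}$.
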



\begin{proof}
  We have
  \begin{equation*}
    \f{\cM_\eta}{N}
    = \frac{\rho^{nJ}}{\lmeas{T}} (J+1) \lmeas{W} 
    \int_{y \in \Phi^{-J}NU} \dd y.
  \end{equation*}
  As $\lmeas{\Phi^{-J}NU} = \rho^{-nJ}N^n\lmeas{U}$ we obtain
  \begin{equation*}
    \f{\cM_\eta}{N}
    = \frac{\lmeas{W}}{\lmeas{T}} (J+1) N^n \lmeas{U}.
  \end{equation*}
  By taking $\lmeas{W} = \lmeas{T} E$ from \itemref{enu:prop-of-w:area-w} of
  Proposition~\vref{pro:prop-of-w} and $J$ from Lemma~\ref{lem:choosing-j} we
  get
  \begin{equation*}
    \f{\cM_\eta}{N}
    = N^n \lmeas{U} E \left( \floor{\log N} 
        + J_0 + 1 \right).
  \end{equation*}
  Finally, the desired result follows by using $\floor{x} = x - \fracpart{x}$.
\end{proof}


\begin{lemma}[The Zero Part]\label{lem:the-zero-part}
  For \eqref{eq:mainth:zero-part} in the proof of
  Theorem~\ref{thm:countdigits} we get
  \begin{equation*}
    \f{\cZ_\eta}{N} = 0.
  \end{equation*}
\end{lemma}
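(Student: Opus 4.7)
The plan is to prove cell-wise cancellation of the integrand. By Definition~\ref{def:round-v}, the region $\floorV[j-w]{\Phi^{-J}NU}$ is, up to a set of $\lambda$-measure zero, a disjoint union of cells $\Phi^{w-j}T_z$ indexed by those $z\in\Lambda$ satisfying $\Phi^{w-j}T_z\subseteq\Phi^{-J}NU$. Consequently it suffices to show that for every such $z$,
$$\int_{y\in\Phi^{w-j}T_z}\bigl(\indicator{W}{\fracpartLambda{\Phi^{j-w}y}}-\lmeas{W}\bigr)\,\dd y \;=\; 0,$$
and summing the vanishing contributions over $z$ and over $j\in\{0,\dots,J\}$ will then give the claim.

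To evaluate the integral on a single cell I would substitute $x=\Phi^{j-w}y$, with Jacobian factor $\rho^{n(j-w)}$, reducing the integration to $T_z=z+T$. For $t\in\wt{T}$ the canonical decomposition of Definition~\ref{def:frac-voronoi} yields $\fracpartLambda{z+t}=t$, because $z\in\Lambda$ and $t\in\wt{T}$; combined with $\lmeas{T\setminus\wt{T}}=0$ this shows that $x\mapsto\fracpartLambda{x}$ is, almost everywhere, a measure-preserving bijection from $T_z$ onto $\wt{T}$. Using $W=W_\eta\subseteq\wt{T}$ together with Proposition~\ref{pro:prop-of-w}\itemref{enu:prop-of-w:lmeas}, I would then obtain
$$\int_{x\in T_z}\indicator{W}{\fracpartLambda{x}}\,\dd x \;=\; \int_{u\in\wt{T}}\indicator{W}{u}\,\dd u \;=\; \lmeas{W},$$
while the constant part of the integrand contributes $\int_{x\in T_z}\lmeas{W}\,\dd x = \lmeas{W}\,\lmeas{T}$.

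These two contributions cancel per cell once the normalisation is accounted for: combining the Jacobian $\rho^{n(w-j)}$ from the substitution, the prefactor $\rho^{nJ}/\lmeas{T}$ from the definition of $\cZ_\eta$, and the identity $\lmeas{W}=E\,\lmeas{T}$ supplied by Proposition~\ref{pro:prop-of-w}\itemref{enu:prop-of-w:area-w}, the integral of $\indicator{W}\circ\fracpartLambda$ matches the integral of the subtracted constant on each cell. Summing the vanishing cell contributions over all $z$ appearing in the decomposition, and then over $j$, yields $\cZ_\eta(N)=0$. The only delicate point is precisely this bookkeeping of normalisation factors so that the cancellation is numerically exact; aside from that, the statement is an exact identity and no estimates are needed.
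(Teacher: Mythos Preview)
Your approach is the paper's: decompose $\floorV[j-w]{\Phi^{-J}NU}$ into full cells $\Phi^{w-j}T_z$, substitute $x=\Phi^{j-w}y$, translate by $-z$ to replace $\fracpartLambda{x}$ by $x$, and reduce each summand to an integral over a single copy of $T$. The paper carries this out in exactly the same way and then simply records $\int_{T}\bigl(\indicator{W}{x}-\lmeas{W}\bigr)\,\dd x=0$ per cell, without invoking any outside factors.

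Where your write-up goes wrong is the last paragraph. You correctly compute the two pieces on a cell as $\int_{T_z}\indicator{W}{\fracpartLambda{x}}\,\dd x=\lmeas{W}$ and $\int_{T_z}\lmeas{W}\,\dd x=\lmeas{W}\,\lmeas{T}$, and then claim that the Jacobian $\rho^{n(w-j)}$, the prefactor $\rho^{nJ}/\lmeas{T}$, and the identity $\lmeas{W}=E\,\lmeas{T}$ together force these to match. They do not: the Jacobian and the global prefactor multiply \emph{both} contributions by the same scalar, so they cannot close a gap between $\lmeas{W}$ and $\lmeas{W}\,\lmeas{T}$; and substituting $\lmeas{W}=E\,\lmeas{T}$ just turns the comparison into $E\,\lmeas{T}$ versus $E\,\lmeas{T}^2$, which is the same discrepancy. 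In short, none of the ``normalisation'' you list is where the cancellation lives. The paper's argument is that the per-cell integral $\int_T(\indicator{W}{x}-\lmeas{W})\,\dd x$ itself vanishes, full stop; so drop the Jacobian/prefactor discussion entirely and argue the vanishing of that single integral over $T$ directly.
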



\begin{proof}
  Consider the integral
  \begin{equation*}    
      I_j
      \colonequals  \int_{y \in \floorV[j-w]{\Phi^{-J}NU}}
      \left( \indicator{W}{\fracpartLambda{\Phi^{j-w}y}} 
        - \lmeas{W} \right) \dd y.
  \end{equation*}
  We can rewrite the region of integration as
  \begin{equation*}
    \floorV[j-w]{\Phi^{-J}NU} 
    = \Phi^{-(j-w)} \floorV{\Phi^{j-w} \Phi^{-J}NU}
    = \Phi^{-(j-w)} \bigcup_{z\in R_{j-w}} T_z
  \end{equation*}
  for some appropriate $R_{j-w} \subseteq \Lambda$. Substituting $x =
  \Phi^{j-w} y$, $\dd x = \rho^{n(j-w)} \dd y$ yields
  \begin{equation*}
      I_j
      = \rho^{-n(j-w)} \int_{x \in \bigcup_{z\in R_{j-w}} T_z}
      \left( \indicator{W}{\fracpartLambda{x}} 
        - \lmeas{W} \right) \dd x.    
  \end{equation*}
  We split up the integral and eliminate the fractional part
  $\fracpartLambda{x}$ by translation to get
  \begin{equation*}
      I_j
      = \rho^{-n(j-w)} \sum_{z\in R_{j-w}} 
      \underbrace{\int_{x \in T} \left( \indicator{W}{x}  
          - \lmeas{W} \right) \dd x}_{=0}.    
  \end{equation*}
  Thus, for all $j\in\N_0$ we obtain $I_j=0$, and therefore
  $\f{\cZ_\eta}{N}=0$.
\end{proof}


\begin{lemma}[The Periodic Part]\label{lem:the-periodic-part}
  For \eqref{eq:mainth:periodic-part} in the proof of
  Theorem~\ref{thm:countdigits} we get
  \begin{equation*}
    \f{\cP_\eta}{N} = 
    N^n \f{\psi_{\eta,\cP}}{\log N} + \Oh{N^\delta}
  \end{equation*}
  with a function $\psi_{\eta,\cP}$,
  \begin{equation*}
    \f{\psi_{\eta,\cP}}{x} =
    \frac{\rho^{n(J_0-\fracpart{x})}}{\lmeas{T}} \sum_{j=0}^\infty 
    \int_{y \in \fracpartV[j-w]{\Phi^{-\floor{x}-J_0}\rho^x U}}
    \left( \indicator{W}{\fracpartLambda{\Phi^{j-w}y}} - \lmeas{W} \right) \dd y.
  \end{equation*}
  Let
  \begin{equation*}
    \Phi = Q \diag{\rho e^{i\theta_1}, \dots, \rho e^{i\theta_n}} Q^{-1},
  \end{equation*}
  where $Q$ is a regular matrix. If there is a $p\in\N$ such that
  \begin{equation}\label{eq:cond-periodic}
    Q \diag{e^{i\theta_1 p}, \dots, e^{i\theta_n p}} Q^{-1} U = U,
  \end{equation}
  then $\psi_{\eta,\cP}$ is \periodic{p}.
\end{lemma}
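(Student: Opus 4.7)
The plan is to identify $\f{\cP_\eta}{N}$ as a truncation of the infinite series defining $N^n \f{\psi_{\eta,\cP}}{\log N}$, bound the missing tail by $\Oh{N^\delta}$, and verify periodicity by a direct computation on the region of integration.

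First I substitute $J = \floor{\log N} + J_0$ into \eqref{eq:mainth:periodic-part} and split the prefactor as $\rho^{nJ} = N^n \rho^{n(J_0 - \fracpart{\log N})}$. Since $\Phi^{-J}N = \Phi^{-\floor{\log N}-J_0}\rho^{\log N}$, the region of integration can be written as $\fracpartV[j-w]{\Phi^{-\floor{\log N}-J_0}\rho^{\log N}U}$, so each summand coincides with the corresponding summand in the definition of $\psi_{\eta,\cP}$ at $x = \log N$; the only discrepancy is that the sum in $\f{\cP_\eta}{N}$ runs over $0 \leq j \leq J$ while that of $\psi_{\eta,\cP}$ runs over all $j \geq 0$.

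The main technical step is estimating the missing tail $\sum_{j > J}$. The integrand is bounded by $1 + \lmeas{W}$, and by Proposition~\ref{pro:round-v-basic-prop}\itemref{enu:round-v-basic-prop:frac-boundary} together with the definition of $\fracpartV[j-w]$,
\begin{equation*}
  \lmeas{\fracpartV[j-w]{\Phi^{-J}NU}}
  \leq \rho^{-n(j-w)} d_\Lambda \cdot \cardV{\boundaryV{\Phi^{j-w-J}NU}}.
\end{equation*}
Combining the hypothesis $\cardV{\boundaryV{NU}} = \Oh{N^\delta}$ with the exact scaling identities $\norm{\Phi^k} = \rho^k$ and $\abs{\det \Phi^k} = \rho^{nk}$, I expect to extend this to $\cardV{\boundaryV{\Phi^k NU}} = \Oh{(\rho^k N)^\delta}$. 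Multiplying by $\rho^{nJ}$ and summing the geometric series $\sum_{j > J} \rho^{j(\delta - n)}$, which converges since $\delta < n$, then yields the desired bound $\Oh{N^\delta}$. The same estimate, applied with the sum starting at $j = 0$, shows that the series defining $\psi_{\eta,\cP}$ converges for every $x$, so $\psi_{\eta,\cP}$ is well-defined.

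For the periodicity claim, set $M = \rho^{-p}\Phi^p = Q\diag{e^{i\theta_1 p}, \ldots, e^{i\theta_n p}}Q^{-1}$, so the hypothesis reads $MU = U$, and consequently $M^{-1}U = U$. Since $p \in \N$, we have $\fracpart{x+p} = \fracpart{x}$ and $\floor{x+p} = \floor{x} + p$, so the prefactor $\rho^{n(J_0 - \fracpart{x})}$ is unchanged under $x \mapsto x + p$. For the region, a short computation gives
\begin{equation*}
  \Phi^{-\floor{x+p}-J_0}\rho^{x+p}U
  = \Phi^{-\floor{x}-J_0}(\rho^p\Phi^{-p})\rho^x U
  = \Phi^{-\floor{x}-J_0}M^{-1}\rho^x U
  = \Phi^{-\floor{x}-J_0}\rho^x U,
\end{equation*}
where the last equality uses that scalars commute with $M^{-1}$ together with $M^{-1}U = U$. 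Hence every summand, and so $\psi_{\eta,\cP}$ itself, is \periodic{p}. The main obstacle I foresee is precisely the first step of the tail estimate, namely extending the boundary-count hypothesis from the scalar case $\Psi = N\cdot I$ to the linear case $\Psi = \Phi^k N$; this hinges on $\Phi^k$ being a bi-Lipschitz scaling by exactly $\rho^k$, so that the counts of lattice points on the boundary transform predictably under iterates of $\Phi$.
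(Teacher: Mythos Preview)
Your proposal is correct and follows essentially the same route as the paper: bound each integral $I_j$ via the inclusion $\fracpartV[j-w]{\Phi^{-J}NU}\subseteq\boundaryV[j-w]{\Phi^{-J}NU}$ together with the boundary-count hypothesis to obtain $\abs{I_j}=\Oh{\rho^{(\delta-n)j}}$, sum the geometric tail $\sum_{j>J}$ to get $\Oh{N^\delta}$, and verify periodicity by showing that $\Phi^{-\floor{x}-J_0}\rho^{x}U$ is invariant under $x\mapsto x+p$. Regarding the obstacle you flag, the paper simply invokes the general form $\cardV{\boundaryV{\Psi U}}=\Oh{\abs{\det\Psi}^{\delta/n}}$ ``as assumed'' without further comment, so you are not missing any hidden argument there.
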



\begin{proof}
  Consider
  \begin{equation*}
    I_j
    \colonequals  \int_{y \in \fracpartV[j-w]{\Phi^{-J}NU}}
      \left( \indicator{W}{\fracpartLambda{\Phi^{j-w}y}}
        - \lmeas{W} \right) \dd y.
  \end{equation*}
  The region of integration satisfies
  \begin{equation}
    \label{eq:proof-periodic-part:region}
    \begin{split}
      \fracpartV[j-w]{\Phi^{-J}NU} 
      \subseteq \boundaryV[j-w]{\Phi^{-J}NU} 
      = \Phi^{-(j-w)} \bigcup_{z\in R_{j-w}} T_z
    \end{split}
  \end{equation}
  for some appropriate $R_{j-w} \subseteq \Lambda$. 
  
  We use the triangle inequality and substitute $x = \Phi^{j-w} y$, $\dd x =
  \rho^{n(j-w)} \dd y$ in the integral to get
  \begin{equation*}
    \abs{I_j} \leq 
    \rho^{-n(j-w)} \int_{x \in \bigcup_{z\in R_{j-w}} T_z}
      \underbrace{\abs{ \indicator{W}{\fracpartLambda{x}} 
          - \lmeas{W}}}_{\leq 1+\lmeas{W}} \dd x.
  \end{equation*}
  After splitting up the integral and using translation to eliminate the
  fractional part, we get
  \begin{equation*}
    \abs{I_j} \leq \rho^{-n(j-w)} \left(1+\lmeas{W}\right)
    \sum_{z\in R_{j-w}} \int_{x \in T} \dd x 
    = \rho^{-n(j-w)} \left(1+\lmeas{W}\right) \lmeas{T} \card{R_{j-w}}.
  \end{equation*}
  Using $\cardV{\boundaryV{\Psi U}} = \Oh{\abs{\det\Psi}^{\delta/n}}$ as
  assumed and~\eqref{eq:proof-periodic-part:region} we gain
  \begin{equation*}
    \card{R_{j-w}} = \abs{\det\left(\Phi^{-J}N\Phi^{j-w}\right)}^{\delta/n}
    = \Oh{\rho^{(j-w)\delta}},
  \end{equation*}
  because $\abs{\det\left(\Phi^{-J}N\right)} = \Oh{1}$, see
  Remark~\ref{rem:auxcalc-taujn}, and $\abs{\det\Phi}=\rho^n$. Thus
  \begin{equation*}
    \abs{I_j} = \Oh{\rho^{\delta (j-w) - n(j-w)}} 
    = \Oh{\rho^{(\delta-n)j}}.
  \end{equation*}

  Now we want to make the summation in $\cP_\eta$ independent from
  $J$, so we consider 
  \begin{equation*}
    I \colonequals  \frac{\rho^{nJ}}{\lmeas{T}} \sum_{j=J+1}^\infty I_j
  \end{equation*}
  Again we use the triangle inequality and we calculate the sum to obtain
  \begin{equation*}
    \abs{I} 
    = \Oh{\rho^{nJ}} \sum_{j=J+1}^\infty \Oh{\rho^{(\delta-n) j}}
    = \Oh{\rho^{nJ} \rho^{(\delta-n) J}}
    = \Oh{\rho^{\delta J}}.
  \end{equation*}
  Note that $\Oh{\rho^J} = \Oh{N}$, so
  we obtain $\abs{I} = \Oh{N^\delta}$.

  Let us look at the growth of 
  \begin{equation*}
    \f{\cP_\eta}{N} 
    = \frac{\rho^{nJ}}{\lmeas{T}} \sum_{j=0}^J I_j.
  \end{equation*}
  We get
  \begin{equation*}
    \abs{\f{\cP_\eta}{N}} 
    = \Oh{\rho^{nJ}} \sum_{j=0}^J \Oh{\rho^{(\delta-n) j}}
    = \Oh{\rho^{nJ}} = \Oh{N^n},
  \end{equation*}
  using $\delta<n$.
 
  Finally, inserting $J$ from Lemma~\ref{lem:choosing-j} and extending the sum
  to infinity, as described above, yields
  \begin{equation*}
    \begin{split}
      \f{\cP_\eta}{N}
      &= \frac{\rho^{nJ}}{\lmeas{T}} \sum_{j=0}^J 
      \int_{y \in \fracpartV[j-w]{\Phi^{-J}NU}}
      \left( \indicator{W}{\fracpartLambda{\Phi^{j-w}y}} 
        - \lmeas{W} \right) \dd y \\
      &= N^n  \f{\psi_{\eta,\cP}}{\log N} + \Oh{N^\delta}.
    \end{split}
  \end{equation*}
  with the desired $\psi_{\eta,\cP}$.

  Now suppose~\eqref{eq:cond-periodic} holds. Then
  \begin{align*}
    \Phi^{-\floor{x}-J_0}\rho^x U
    &= \rho^x
    Q \diag{\rho^{-\floor{x}-J_0} e^{-i\theta_1\left(\floor{x}+J_0\right)}, \dots, 
      \rho^{-\floor{x}-J_0} e^{-i\theta_n\left(\floor{x}+J_0\right)}} Q^{-1} U \\
    &= \rho^{\fracpart{x}-J_0} 
    Q \diag{e^{-i\theta_1\left(\floor{x}+J_0\right)}, \dots, 
      e^{-i\theta_n\left(\floor{x}+J_0\right)}} Q^{-1} U.
  \end{align*}
  Now, we can conclude that the region of integration in
  $\f{\psi_{\eta,\cP}}{x}$ is \periodic{p} using~\eqref{eq:cond-periodic}. All
  other occurrences of $x$ in $\f{\psi_{\eta,\cP}}{x}$ are of the form
  $\fracpart{x}$, i.e., \periodic{1}, so period~$p$ is obtained.
\end{proof}


\begin{lemma}[The Other Part]\label{lem:the-other-part}
  For \eqref{eq:mainth:other-part} in the proof of
  Theorem~\ref{thm:countdigits} we get
  \begin{equation*}
    \f{\cQ_\eta}{N} = N^n \psi_{\eta,\cQ}
    + \Oh{N^{\alpha} \log N}  + \Oh{N^\delta},
  \end{equation*}
  with
  \begin{equation*}
    \psi_{\eta,\cQ} = \frac{\lmeas{U}}{\lmeas{T}}
    \sum_{j=0}^\infty \frac{\beta_j}{\lmeas{T}}
  \end{equation*}
  and $\alpha = n+\log\mu<n$, where $\mu<1$ can be found in
  Theorem~\vref{th:w-naf-distribution}.
\end{lemma}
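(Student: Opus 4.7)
The plan is to collapse each inner integral
\begin{equation*}
  I_j \colonequals \int_{y \in \floorV[j-w]{\Phi^{-J}NU}}
    \f{\left(\indicator*{W_{J-j}} - \indicator*{W}\right)}{\fracpartLambda{\Phi^{j-w}y}} \dd y
\end{equation*}
to a single scalar multiple of $\beta_{J-j}$. Following the tile-decomposition used in Lemma~\ref{lem:the-zero-part}, I would write $\floorV[j-w]{\Phi^{-J}NU}=\Phi^{-(j-w)}\bigcup_{z\in R_{j-w}}T_z$ for a suitable $R_{j-w}\subseteq\Lambda$, substitute $x=\Phi^{j-w}y$ (so $\dd x = \rho^{n(j-w)}\dd y$), split the integral tile by tile, and exploit translation-invariance of $\fracpartLambda{\cdot}$ to reduce each summand to $\int_{T}(\indicator*{W_{J-j}}-\indicator*{W})=\beta_{J-j}$ by Definition~\ref{def:w-wk-beta}. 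This yields the clean identity
\begin{equation*}
  I_j = \rho^{-n(j-w)}\card*{R_{j-w}}\beta_{J-j}.
\end{equation*}

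Next I would apply Proposition~\ref{pro:set-nu}\itemref{enu:set-nu:floor-ceil-cover} with $\Psi=\Phi^{j-w-J}N$ (so that $\abs{\det\Psi}=\rho^{n(j-w-J)}N^n$) to the counting $\card*{R_{j-w}}=\cardV{\floorV{\Phi^{j-w-J}NU}}$. This produces
\begin{equation*}
  \card*{R_{j-w}} = \rho^{n(j-w-J)}N^n\tfrac{\lmeas{U}}{d_\Lambda}+\Oh{\rho^{(j-w-J)\delta}N^\delta}.
\end{equation*}
Combined with $\beta_{J-j}=\Oh{\mu^{J-j}}$ from Proposition~\ref{pro:prop-of-w}\itemref{enu:prop-of-w:beta}, this decomposes $I_j$ into a main piece $\rho^{-nJ}N^n\lmeas{U}\beta_{J-j}/d_\Lambda$ and an error $\Oh{\rho^{(j-w)(\delta-n)-J\delta}N^\delta\mu^{J-j}}$.

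Summing the main pieces over $j=0,\dots,J$ and multiplying by $\rho^{nJ}/\lmeas{T}$, the reindexing $k=J-j$ collapses the sum to $\frac{N^n\lmeas{U}}{d_\Lambda^2}\sum_{k=0}^J\beta_k$. Because $\beta_k=\Oh{\mu^k}$, extending the sum to $k=\infty$ costs $\Oh{N^n\mu^J}=\Oh{N^\alpha}$, using $\rho^J=\Theta(N)$ and $\alpha=n+\log_\rho\mu$; this delivers exactly $N^n\psi_{\eta,\cQ}$ up to the desired error order.

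The technical obstacle will be the bookkeeping for the error piece. After the same reindexing the error sum becomes $\rho^{J(\delta-n)}\sum_{k=0}^J(\mu\rho^{n-\delta})^k$, and its behaviour trifurcates according to whether $\mu\rho^{n-\delta}$ is less than, equal to, or greater than~$1$. In the regime $\mu\rho^{n-\delta}>1$ the geometric sum is $\Oh{(\mu\rho^{n-\delta})^J}$, producing an overall error $\Oh{N^n\mu^J}=\Oh{N^\alpha}$; in the regime $\mu\rho^{n-\delta}<1$ the sum is $\Oh{1}$, giving $\Oh{N^\delta}$; the boundary case $\mu\rho^{n-\delta}=1$ forces the $\log N$ factor and gives $\Oh{N^\alpha\log N}=\Oh{N^\delta\log N}$. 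A uniform inspection of the three cases (noting $\alpha>\delta\Leftrightarrow\mu\rho^{n-\delta}>1$) shows that $\Oh{N^\alpha\log N}+\Oh{N^\delta}$ dominates in every case, yielding the claimed bound.
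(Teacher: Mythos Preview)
Your argument is correct and follows essentially the same route as the paper: tile-decompose $\floorV[j-w]{\Phi^{-J}NU}$, substitute $x=\Phi^{j-w}y$, reduce each tile to $\beta_{J-j}$, count tiles via Proposition~\ref{pro:set-nu}, then separate main term from error and extend the $\beta$-sum to infinity. The only cosmetic slip is that when you write ``the error sum becomes $\rho^{J(\delta-n)}\sum_{k=0}^J(\mu\rho^{n-\delta})^k$'' you have silently dropped the prefactor $\rho^{nJ}/\lmeas{T}$; you restore it in the very next sentence (``producing an overall error $\Oh{N^n\mu^J}$''), so the conclusion is unaffected, but make this explicit. Your trichotomy on $\mu\rho^{n-\delta}$ is equivalent to the paper's dichotomy (the paper lumps the equality case into $\mu\rho^{n-\delta}\ge 1$), and both lead to $\Oh{N^\alpha\log N}+\Oh{N^\delta}$.
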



\begin{proof}
  Consider
  \begin{equation*}
      I_{j,\ell}
      \colonequals  \int_{y \in \floorV[j-w]{\Phi^{-J}NU}} 
      \f{\left( \indicator*{W_{\eta,\ell}} - \indicator*{W} 
        \right)}{\fracpartLambda{\Phi^{j-w}y}} \dd y.
  \end{equation*}
  We can rewrite the region of integration and get
  \begin{equation*}
    \floorV[j-w]{\Phi^{-J}NU} 
    = \Phi^{-(j-w)} \floorV{\Phi^{j-w} \Phi^{-J}NU}
    = \Phi^{-(j-w)} \bigcup_{z\in R_{j-w}} T_z
  \end{equation*}
  for some appropriate $R_{j-w} \subseteq \Lambda$, as in the proof of
  Lemma~\ref{lem:the-zero-part}. Substituting $x = \Phi^{j-w} y$, $\dd x =
  \rho^{n(j-w)} \dd y$ yields
  \begin{equation*}
      I_{j,\ell}
      = \rho^{-n(j-w)} \int_{x \in \bigcup_{z\in R_{j-w}} T_z} 
      \f{\left( \indicator*{W_{\eta,\ell}} - \indicator*{W} 
        \right)}{\fracpartLambda{x}} \dd x
  \end{equation*}
  and further
  \begin{equation*}
      I_{j,\ell}
      = \rho^{-n(j-w)} \sum_{z\in R_{j-w}} 
      \underbrace{\int_{x \in T} \f{\left( \indicator*{W_{\eta,\ell}} 
            - \indicator*{W} \right)}{x} \dd x}_{= \beta_\ell}
      = \rho^{-n(j-w)} \card{R_{j-w}} \beta_\ell,
  \end{equation*}
  by splitting up the integral, using translation to eliminate the fractional
  part and taking $\beta_\ell$ according to \itemref{enu:prop-of-w:beta} of
  Proposition~\vref{pro:prop-of-w}. From Proposition~\vref{pro:set-nu} we
  obtain
  \begin{equation*}
    \frac{\card{R_{j-w}}}{\rho^{n(j-w)}}
    = \frac{\abs{\det\left(\Phi^{-J} N \Phi^{j-w}\right)}}{\rho^{n(j-w)}} 
    \frac{\lmeas{U}}{\lmeas{T}}
    + \Oh{\frac{\abs{\det\left(\Phi^{-J} N \Phi^{j-w}\right)}^{\delta/n}}{
        \rho^{n(j-w)}}},
  \end{equation*}
  which can be rewritten as
  \begin{equation*}
    \frac{\card{R_{j-w}}}{\rho^{n(j-w)}}
    = \rho^{-nJ} N^n \frac{\lmeas{U}}{\lmeas{T}}
    + \Oh{\rho^{(\delta-n) j}}
  \end{equation*}
  because $\abs{\det\Phi}=\rho^n$ and because $\abs{\tau^{-J}N} = \Oh{1}$, see
  Remark~\ref{rem:auxcalc-taujn}.
  
  Now let us have a look at
  \begin{equation*}
      \f{\cQ_\eta}{N} 
      = \frac{\rho^{nJ}}{\lmeas{T}} \sum_{j=0}^J I_{j,J-j}.
  \end{equation*}
  Inserting the result above and using $\beta_{\ell} = \Oh{\mu^\ell}$, see
  \itemref{enu:prop-of-w:beta} of Proposition~\vref{pro:prop-of-w}, yields
  \begin{equation*}
    \f{\cQ_\eta}{N} 
    = N^n \frac{\lmeas{U}}{\left(\lmeas{T}\right)^2}
    \sum_{j=0}^J \beta_{J-j}
    + \rho^{nJ} \sum_{j=0}^J 
    \Oh{\rho^{(\delta-n)j}} \Oh{\mu^{J-j}}.
  \end{equation*}
  Therefore, after reversing the order of the first summation, we obtain
  \begin{equation*}
    \f{\cQ_\eta}{N} 
    = N^n \frac{\lmeas{U}}{\left(\lmeas{T}\right)^2}
    \sum_{j=0}^J \beta_j
    + \rho^{nJ} \mu^J \sum_{j=0}^J 
    \Oh{\left(\mu\rho^{n-\delta}\right)^{-j}}.
  \end{equation*}
  If $\mu\rho^{n-\delta}\geq1$, then the second sum is $J \Oh{1}$,
  otherwise the sum is $\Oh{\mu^{-J} \rho^{(\delta-2)J}}$. So we obtain
  \begin{equation*}
    \f{\cQ_\eta}{N} 
    = N^n \frac{\lmeas{U}}{\left(\lmeas{T}\right)^2}
    \sum_{j=0}^J \beta_j
    + \Oh{\rho^{nJ} \mu^J J}
    + \Oh{\rho^{\delta J}}.
  \end{equation*}
  Using $J = \f{\Theta}{\log N}$, see Lemma~\ref{lem:choosing-j}, and defining
  $\alpha = n + \log\mu$ yields
  \begin{equation*}
    \f{\cQ_\eta}{N} 
    = N^n \frac{\lmeas{U}}{\left(\lmeas{T}\right)^2}
    \sum_{j=0}^J \beta_j
    + \underbrace{\Oh{N^{n + \log\mu} \log N}}_{=\Oh{N^\alpha \log N}}
    + \Oh{N^\delta}.
  \end{equation*}

  Now consider the first sum. Since $\beta_j = \Oh{\mu^j}$, see
  \itemref{enu:prop-of-w:beta} of Proposition~\vref{pro:prop-of-w}, we obtain
  \begin{equation*}
    N^n \sum_{j=J+1}^\infty \beta_j = N^n \Oh{\mu^J} 
    = \Oh{N^\alpha}.
  \end{equation*}
  Thus the lemma is proved, because we can extend the sum to infinity.
\end{proof}


\begin{lemma}[The Small Part]\label{lem:the-small-part}
  For \eqref{eq:mainth:small-part} in the proof of
  Theorem~\ref{thm:countdigits} we get
  \begin{equation*}
    \f{\cS_\eta}{N} = \Oh{N^{\alpha} \log N} + \Oh{ N^\delta }
  \end{equation*}  
  with $\alpha = n + \log\mu<n$ and $\mu<1$ from
  Theorem~\vref{th:w-naf-distribution}.
\end{lemma}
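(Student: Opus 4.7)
The proof will follow the blueprint of Lemma~\ref{lem:the-other-part}, but the region of integration now consists of fractional cells, which requires combining features from the proof of Lemma~\ref{lem:the-periodic-part}. Set
\begin{equation*}
  I_{j,\ell} \colonequals \int_{y \in \fracpartV[j-w]{\Phi^{-J}NU}}
  \f{\left(\indicator*{W_\ell} - \indicator*{W}\right)}{\fracpartLambda{\Phi^{j-w} y}} \dd y,
\end{equation*}
so that $\f{\cS_\eta}{N} = (\rho^{nJ}/\lmeas{T})\sum_{j=0}^{J} I_{j,J-j}$. The key step is the single-term estimate $\abs{I_{j,\ell}} = \Oh{\rho^{(\delta-n)j}\mu^\ell}$; summing with a case analysis will then deliver the claim.

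To bound $I_{j,\ell}$, I apply the triangle inequality, substitute $x = \Phi^{j-w}y$ with $\dd x = \rho^{n(j-w)}\dd y$, and enlarge the region of integration to $\boundaryV{\Phi^{j-w-J}NU}$, a union of cells $T_z$ over some $R_{j-w}\subseteq\Lambda$. The hypothesis $\cardV{\boundaryV{\Psi U}} = \Oh{\abs{\det\Psi}^{\delta/n}}$ together with $\abs{\det(\Phi^{-J}N)} = \Oh{1}$ from Remark~\ref{rem:auxcalc-taujn} gives $\card{R_{j-w}} = \Oh{\rho^{(j-w)\delta}}$, exactly as in the proof of Lemma~\ref{lem:the-periodic-part}. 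On each cell, translation by the lattice point $z$ reduces the integrand to $\abs{\indicator_{W_\ell} - \indicator_W}$ integrated over $T$; an $L^1$-strengthening of \itemref{enu:prop-of-w:beta} of Proposition~\ref{pro:prop-of-w}, namely $\int_T \abs{\indicator_{W_\ell} - \indicator_W}\dd\lambda = \Oh{\mu^\ell}$ (see the obstacle below), then produces $\abs{I_{j,\ell}} = \Oh{\rho^{(\delta-n)j}\mu^\ell}$. Putting $\ell = J-j$ one arrives at
\begin{equation*}
  \abs{\f{\cS_\eta}{N}} = \rho^{nJ}\mu^J \sum_{j=0}^{J} \Oh{(\mu\rho^{n-\delta})^{-j}},
\end{equation*}
and the two-case analysis on the sign of $\log(\mu\rho^{n-\delta})$, exactly as in the proof of Lemma~\ref{lem:the-other-part}, gives $\Oh{\rho^{nJ}\mu^J \cdot J} = \Oh{N^\alpha \log N}$ when $\mu\rho^{n-\delta} \geq 1$, and $\Oh{\rho^{\delta J}} = \Oh{N^\delta}$ when $\mu\rho^{n-\delta} < 1$; here $J = \floor{\log N} + J_0$ is taken from Lemma~\ref{lem:choosing-j}.

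The main obstacle is precisely the $L^1$-strengthening invoked above. As stated, \itemref{enu:prop-of-w:beta} of Proposition~\ref{pro:prop-of-w} controls only the signed integral $\beta_\ell = \int_T(\indicator_{W_\ell} - \indicator_W)\dd\lambda = \Oh{\mu^\ell}$, whereas here the absolute value must pass under the integral, which amounts to $\lmeas{W_\ell \triangle W} = \Oh{\mu^\ell}$. Replacing the integrand by the crude bound $1$ discards the $\mu^\ell$-decay and yields only $\Oh{N^n}$, too weak to match the claim. To justify the stronger bound I would appeal to the monotonicity $\cW_{\eta,\ell} \subseteq \cW_{\eta,\ell+1} \subseteq \cW_\eta$, following from \itemref{enu:prop-of-w:w-is-union} of Proposition~\ref{pro:prop-of-w}, together with the observation that $\coverV[\ell+w]{\cW_{\eta,\ell}}$ can exceed $\cW_\eta$ only in cells adjacent to $\boundary*{\cW_\eta}$, whose total Lebesgue measure is controlled by the very same ingredients that drive $\beta_\ell = \Oh{\mu^\ell}$. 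This is the single substantive step not already prepared by the earlier lemmata, and where the argument genuinely diverges from that of Lemma~\ref{lem:the-other-part}.
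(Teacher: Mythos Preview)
Your approach mirrors the paper's proof: the same substitution $x=\Phi^{j-w}y$, the same enlargement of $\fracpartV[j-w]{\Phi^{-J}NU}$ to the boundary-cell union $\bigcup_{z\in R_{j-w}}T_z$, the same count $\card{R_{j-w}}=\Oh{\rho^{(j-w)\delta}}$ via Remark~\ref{rem:auxcalc-taujn}, and the identical two-case summation taken from Lemma~\ref{lem:the-other-part}.

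The one point of divergence is precisely the obstacle you flag. The paper does \emph{not} pass the absolute value under the integral. After the substitution it writes, as an equality, the integral over the full boundary-cell union, then splits into cells and applies the triangle inequality only to the resulting sum over $z\in R_{j-w}$; each cell then contributes the \emph{signed} integral $\beta_\ell$, so only $\abs{\beta_\ell}=\Oh{\mu^\ell}$ from Proposition~\ref{pro:prop-of-w}\itemref{enu:prop-of-w:beta} is invoked. Your observation that $\fracpartV{\Phi^{j-w-J}NU}$ is merely contained in, not equal to, $\bigcup_{z\in R_{j-w}}T_z$ is correct, and the paper's displayed equality at that step is loose as written. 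Your route---triangle inequality first, then enlarge the region (now legitimate for a non-negative integrand), then appeal to the $L^1$ bound $\lmeas{W_\ell\triangle W}=\Oh{\mu^\ell}$---is the honest repair. The sketch you give for that $L^1$ bound (monotonicity of $\cW_{\eta,\ell}$ from Proposition~\ref{pro:prop-of-w}\itemref{enu:prop-of-w:w-is-union}, plus the overshoot of $\coverV[\ell+w]{\cW_{\eta,\ell}}$ being confined to a thin layer near $\boundary*{\cW_\eta}$) is the right idea but would need to be spelled out; it is not a statement already available in the paper.
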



\begin{proof}
  Consider
  \begin{equation*}
    I_{j,\ell}
    \colonequals  \int_{y \in \fracpartV[j-w]{\Phi^{-J}NU}} 
    \f{\left( \indicator*{W_\ell} - \indicator*{W} 
      \right)}{\fracpartLambda{\Phi^{j-w}y}} \dd y.
    \end{equation*}
    Again, as in the proof of Lemma~\ref{lem:the-periodic-part}, the region of
    integration satisfies
  \begin{equation}
    \label{eq:proof-small-part:region}
    \fracpartV[j-w]{\Phi^{-J}NU} 
    \subseteq \boundaryV[j-w]{\Phi^{-J}NU}
    = \Phi^{-(j-w)} \bigcup_{z\in R_{j-w}} T_z
  \end{equation}
  for some appropriate $R_{j-w} \subseteq \Lambda$. 

  We substitute $x = \Phi^{j-w} y$, $\dd x = \rho^{n(j-w)} \dd y$ in the
  integral to get
  \begin{equation*}
    \abs{I_{j,\ell}} =
    \rho^{-n(j-w)} \abs{\int_{x \in \bigcup_{z\in R_{j-w}} T_z}
    \f{\left(\indicator*{W_\ell} 
      - \indicator*{W}\right)}{\fracpartLambda{x}} \dd x}.
  \end{equation*}
  Again, after splitting up the integral, using translation to eliminate the
  fractional part and the triangle inequality, we get
  \begin{equation*}
    \abs{I_{j,\ell}} \leq  \rho^{-n(j-w)}
    \sum_{z\in R_{j-w}} \underbrace{\abs{\int_{x \in T}
      \f{\left(\indicator*{W_\ell} - \indicator*{W}\right)}{x} 
      \dd x}}_{= \abs{\beta_\ell}}
    = \rho^{-n(j-w)} \card{R_{j-w}} \abs{\beta_\ell} ,
  \end{equation*}
  in which $\abs{\beta_\ell} = \Oh{\mu^\ell}$ is known from
  \itemref{enu:prop-of-w:beta} of Proposition~\vref{pro:prop-of-w}. Using
  $\cardV{\boundaryV{\Psi U}} = \Oh{\abs{\det\Psi}^{\delta/n}}$,
  Remark~\ref{rem:auxcalc-taujn}, and~\eqref{eq:proof-small-part:region} we
  get
  \begin{equation*}
    \card{R_{j-w}} = \Oh{\abs{\det\Phi^{-J}N\Phi^{j-w}}^{\delta/n}} 
    = \Oh{\rho^{\delta (j-w)}},
  \end{equation*}
  because $\abs{\det\Phi}=\rho^n$ and $\abs{\tau^{-J}N} = \Oh{1}$. Thus
  \begin{equation*}
    \abs{I_{j,\ell}} 
    = \Oh{ \mu^\ell \rho^{(\delta - n)(j-w)} } 
    = \Oh{ \mu^\ell \rho^{(\delta-n) j} } 
  \end{equation*}
  follows by assembling everything together.

  Now we are ready to analyse
  \begin{equation*}
    \f{\cS_\eta}{N}
    = \frac{\rho^{nJ}}{\lmeas{T}} \sum_{j=0}^J I_{j,J-j}.    
  \end{equation*}
  Inserting the result above yields
  \begin{equation*}
    \abs{\f{\cS_\eta}{N}}
    = \frac{\rho^{nJ}}{\lmeas{T}} \sum_{j=0}^J 
    \Oh{ \mu^{J-j} \rho^{(\delta-n) j} } 
    = \frac{\mu^J \rho^{nJ}}{\lmeas{T}} \sum_{j=0}^J 
    \Oh{ \left( \mu \rho^{n-\delta}\right)^{-j} } 
  \end{equation*}
  and thus, by the same argument as in the proof of
  Lemma~\ref{lem:the-other-part},
  \begin{equation*}
    \abs{\f{\cS_\eta}{N}}
    = \mu^J \rho^{nJ} \Oh{J + \mu^{-J} \rho^{(\delta - n)J}} 
    = \Oh{\mu^J \rho^{nJ} J} + \Oh{ \rho^{\delta J} }.
  \end{equation*}
  Finally, by using Lemma~\ref{lem:choosing-j} we obtain
  \begin{equation*}
    \abs{\f{\cS_\eta}{N}} = \Oh{N^{\alpha} \log N} + \Oh{ N^\delta }
  \end{equation*}
  with $\alpha = n+\log\mu$. Since $\mu<1$, we have $\alpha<n$.
\end{proof}


\begin{lemma}[The Fractional Cells Part]\label{lem:the-fractionalcells-part}
  For \eqref{eq:mainth:fraccells-part} in the proof of
  Theorem~\ref{thm:countdigits} we get
  \begin{equation*}
    \f{\cF_\eta}{N} = \Oh{N^\delta \log N}.
  \end{equation*}    
\end{lemma}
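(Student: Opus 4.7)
The plan is to estimate each of the two integrals in~\eqref{eq:mainth:fraccells-part} by the Lebesgue measure of its region of integration, exploiting the trivial bound $\indicator{W_j}\leq 1$, and then to sum the resulting bound over the $J+1$ values of~$j$ dictated by Lemma~\ref{lem:choosing-j}.

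First I would identify the geometry of the regions. By definition $\coverV{NU}=\bigcup_{z\in NU\cap\Lambda}T_z$, while $\floorV{NU}\subseteq NU\subseteq \ceilV{NU}$ and $\floorV{NU}\subseteq\coverV{NU}\subseteq\ceilV{NU}$. Therefore both symmetric differences satisfy
\begin{equation*}
(\coverV{NU}\setminus NU)\cup(NU\setminus\coverV{NU})
\;\subseteq\;\ceilV{NU}\setminus\floorV{NU}
\;\subseteq\;\boundaryV{NU}.
\end{equation*}
This reduces the task to controlling $\lmeas{\boundaryV{NU}}$.

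Next I invoke Proposition~\ref{pro:round-v-basic-prop}\itemref{enu:round-v-basic-prop:card}, which yields $\lmeas{\boundaryV{NU}}=d_\Lambda\cdot\cardV{\boundaryV{NU}}$, i.e., each of the $\cardV{\boundaryV{NU}}$ boundary cells has measure $d_\Lambda=\lmeas{T}$. By the hypothesis of Theorem~\ref{thm:countdigits} we have $\cardV{\boundaryV{NU}}=\Oh{N^\delta}$, hence $\lmeas{\boundaryV{NU}}=\Oh{N^\delta}$. Using $0\leq\indicator{W_j}\leq1$ therefore gives, for each $j\in\{0,\ldots,J\}$,
\begin{equation*}
\abs*{\int_{x\in\coverV{NU}\setminus NU}\!\indicator{W_j}{\fracpartLambda{\Phi^{-j-w}x}}\dd x}
\leq\lmeas{\boundaryV{NU}}=\Oh{N^\delta},
\end{equation*}
and the analogous estimate holds for the integral over $NU\setminus\coverV{NU}$.

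Finally, summing these uniform bounds over $j=0,\ldots,J$ and using Lemma~\ref{lem:choosing-j}, which gives $J+1=\f{\Theta}{\log N}$, yields
\begin{equation*}
\abs{\f{\cF_\eta}{N}}\leq\frac{2}{\lmeas{T}}(J+1)\,\Oh{N^\delta}=\Oh{N^\delta\log N},
\end{equation*}
as claimed. No real obstacle is anticipated: the argument is purely volumetric and uses only the trivial upper bound on $\indicator{W_j}$ together with the cell-boundary count already assumed for $U$. The only point requiring care is the geometric inclusion of the two symmetric differences into $\boundaryV{NU}$, which follows directly from the definitions of $\floorVname$, $\ceilVname$ and $\coverVname$ in Definition~\ref{def:round-v}.
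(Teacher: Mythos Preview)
Your proof is correct and essentially identical to the paper's: both bound the indicator by $1$, embed the two symmetric differences into $\boundaryV{NU}=\bigcup_{z\in R}T_z$, and then combine $\card{R}=\Oh{N^\delta}$ with $J+1=\Oh{\log N}$. One tiny imprecision: Proposition~\ref{pro:round-v-basic-prop}\itemref{enu:round-v-basic-prop:card} gives $\lmeas{\coverV{B}}=d_\Lambda\,\cardV{B}$ rather than $\lmeas{B}=d_\Lambda\,\cardV{B}$, but since $\boundaryV{NU}\subseteq\coverV{\boundaryV{NU}}$ the inequality $\lmeas{\boundaryV{NU}}\leq d_\Lambda\,\cardV{\boundaryV{NU}}$ is all you need.
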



\begin{proof}
  For the regions of integration in $\cF_\eta$ we obtain
  \begin{subequations}
    \begin{align*}
      NU \setminus \coverV{NU}
      &\subseteq \ceilV{NU} \setminus \floorV{NU}
      = \boundaryV{NU}
      = \bigcup_{z\in R} T_z
      \intertext{and}
      \coverV{NU} \setminus NU
      &\subseteq \ceilV{NU} \setminus \floorV{NU}
      = \boundaryV{NU}
      = \bigcup_{z\in R} T_z  
    \end{align*}
  \end{subequations}
  for some appropriate $R \subseteq \Lambda$ using
  Proposition~\vref{pro:round-v-basic-prop}. Thus we get
  \begin{equation*}
    \abs{\f{\cF_\eta}{N}} 
    \leq \frac{2}{\lmeas{T}} \sum_{j=0}^J
    \int_{x \in \bigcup_{z\in R} T_z} 
    \indicator{W_j}{\fracpartLambda{\Phi^{-j-w}x}} \dd x
    \leq \frac{2}{\lmeas{T}} \sum_{j=0}^J \sum_{z\in R}
    \int_{x \in T_z} \dd x,
  \end{equation*}
  in which the indicator function was replaced by $1$. Dealing with the sums and
  the integral, which is $\Oh{1}$, we obtain
  \begin{equation*}
    \abs{\f{\cF_\eta}{N}} = (J+1) \card*{R} \Oh{1}.
  \end{equation*}
  Since $J = \Oh{\log N}$, see Lemma~\ref{lem:choosing-j}, and
  $\card*{R} = \Oh{N^\delta}$, the desired result
  follows.
\end{proof}


\begin{lemma}\label{lem:psi-continous}
  If the $\psi_\eta$ from Theorem~\ref{thm:countdigits} is \periodic{p} for
  some $p\in\N$, then $\psi_\eta$ is also continuous.
\end{lemma}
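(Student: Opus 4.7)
The plan is to argue by contradiction, exploiting the $p$-periodicity to reduce a putative jump to the behaviour of $Z_\eta(N)/N^n$ at arbitrarily large $N$, and then to show that $Z_\eta$, viewed as a sum over lattice points, cannot admit such a jump.

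First I would note that each summand $\psi_{\eta,\cM}$, $\psi_{\eta,\cP}$, $\psi_{\eta,\cQ}$ admits one-sided limits at every $x_0\in\R$: the constant $\psi_{\eta,\cQ}$ trivially, and $\psi_{\eta,\cM}$ by virtue of $\fracpart{x}$; for $\psi_{\eta,\cP}$ the dependence on $x$ goes only through $\fracpart{x}$, $\floor{x}$, and the cell inclusions defining $\floorV[j-w]{\Phi^{-\floor{x}-J_0}\rho^x U}$, each of which has one-sided limits. Moreover, the defining formulas are right-continuous, so it will suffice to prove that the jump $a := \f{\psi_\eta}{x_0^+} - \f{\psi_\eta}{x_0^-}$ vanishes at every $x_0$. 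Suppose for contradiction that $a\neq 0$ at some $x_0$.

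Set $N_k = \rho^{x_0 + kp}$ and, with $\epsilon_k = 1/k^2$, $N_k^\pm = \rho^{\pm\epsilon_k} N_k$. On the \emph{asymptotic side}, I apply Theorem~\ref{thm:countdigits} to $M = N_k^\pm$ and divide by $N_k^n$. Using $(N_k^\pm/N_k)^n = \rho^{\pm n\epsilon_k} = 1+\Oh{\epsilon_k}$, the bound $\epsilon_k \log_\rho N_k = \Oh{1/k}$, the $p$-periodicity identity $\f{\psi_\eta}{x_0\pm\epsilon_k+kp} = \f{\psi_\eta}{x_0\pm\epsilon_k}$, and the error term $\Oh{M^{\beta-n}\log M}$ with $\beta = \max\{\alpha,\delta\} < n$, all cross terms cancel or become $o(1)$ as $k\to\infty$, yielding
\begin{equation*}
\frac{\f{Z_\eta}{N_k^+} - \f{Z_\eta}{N_k^-}}{N_k^n}
 = \f{\psi_\eta}{x_0 + \epsilon_k} - \f{\psi_\eta}{x_0 - \epsilon_k} + o(1)
 \;\longrightarrow\; a.
\end{equation*}
On the \emph{counting side}, every lattice point in $N_k^+ U$ contributes at most $J+1 = \Oh{k}$ to the sum defining $Z_\eta$ by Lemma~\ref{lem:choosing-j}, and the number of lattice points in $N_k^+ U \triangle N_k^- U$ is $\Oh{N_k^n/k^2} + \Oh{N_k^\delta}$; multiplying and dividing by $N_k^n$ gives $\Oh{1/k} + \Oh{N_k^{\delta-n} k} \to 0$. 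This contradicts the asymptotic side unless $a=0$.

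The main obstacle lies in establishing the bound $\cardV{N_k^+ U \triangle N_k^- U} = \Oh{N_k^n/k^2 + N_k^\delta}$. A naive application of Proposition~\ref{pro:set-nu} controls only $\cardV{N_k^+ U} - \cardV{N_k^- U}$, which equals the symmetric-difference count only when $N_k^- U \subseteq N_k^+ U$ (as holds for $U$ star-shaped about the origin, or for balls). In general one has to observe that the symmetric difference sits in a strip of width $\Oh{N_k\epsilon_k}$ about the topological boundary $\partial(N_k U)$, then estimate the lattice content of this strip by volume when the strip width exceeds a lattice-cell diameter, and by the $\Oh{N_k^\delta}$ boundary cells otherwise; with $\epsilon_k = 1/k^2$ the first regime dominates for all sufficiently large $k$, so the combined bound holds.
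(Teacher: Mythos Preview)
Your overall strategy coincides with the paper's: exploit the asymptotic formula for $Z_\eta$ and the $p$-periodicity to express a putative jump of $\psi_\eta$ as a limit of normalised differences $\bigl(Z_\eta(M')-Z_\eta(M)\bigr)/M^n$, and then bound those differences directly by (number of lattice points in the gap region) $\times$ (maximal length of a \wNAF{}). The paper does exactly this, with $M'=M$ and $M''=M-1$.

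There are, however, two points where your execution is looser than the paper's, and the paper's choices are worth noting.

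\textbf{The cell-inclusion discontinuities.} You assert that $\psi_{\eta,\cP}$ has one-sided limits everywhere and is right-continuous, citing that the dependence on $x$ is through $\fracpart{x}$, $\floor{x}$, and the cell inclusions defining $\floorV[j-w]{\cdot}$. But the cell-inclusion thresholds (where some $T_z$ becomes exactly contained in $\Phi^{j-w}\Phi^{-\floor{x}-J_0}\rho^x U$) occur at values of $x$ that need not be integers and need not be handled by any obvious monotonicity; for a merely measurable $U$ the right-continuity claim is not clear. The paper disposes of this in one line: by Lemma~\ref{lem:the-zero-part} the integral over $\floorV[j-w]{\Phi^{-\floor{x}-J_0}\rho^x U}$ vanishes, so one may replace the domain $\fracpartV[j-w]{\cdot}$ by the full region $\Phi^{-\floor{x}-J_0}\rho^x U$ in the formula for $\psi_{\eta,\cP}$. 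After this replacement the only discontinuities left are at integers, and the counting argument need only be run for $x_0\in\Z$.

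\textbf{The lattice-point bound in the gap region.} You acknowledge that controlling $\cardV{N_k^+U\triangle N_k^-U}$ is the main obstacle. Your strip argument (``estimate by volume when the strip width exceeds a cell diameter'') is not a consequence of the sole hypothesis $\cardV{\boundaryV{NU}}=\Oh{N^\delta}$: covering the $h$-neighbourhood of $\partial(NU)$ by the $h$-fattenings of the $\Oh{N^\delta}$ boundary cells gives a volume bound $\Oh{N^\delta h^n}$, which with $h=N_k\epsilon_k$ and $\epsilon_k=1/k^2$ is far too large. One can rescue this by exploiting the hypothesis at scale $N/h$ rather than $N$ and choosing $\epsilon_k=k^{-c}$ with $c>1/(n-\delta)$, but this needs to be said. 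The paper avoids the issue entirely: having reduced to $x_0\in\Z$, it takes $M=\rho^m$ with $m\in x_0+p\Z$ and compares $Z_\eta(M)$ with $Z_\eta(M-1)$, then invokes Proposition~\ref{pro:set-nu}\itemref{enu:set-nu:difference} directly to obtain $\cardV{MU\setminus(M-1)U}=\Oh{M^\delta}$, so that the difference is $\Oh{M^\delta\log M}=o(M^n)$.

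In short: your plan is the paper's plan, but the paper's two simplifications---using the zero-part identity to localise all discontinuities at integers, and then using the ready-made Proposition~\ref{pro:set-nu}\itemref{enu:set-nu:difference} with the integer shift $M\mapsto M-1$---sidestep precisely the two places where your argument is incomplete.
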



\begin{proof}
  There are two possible parts of $\psi_\eta$ where a discontinuity could
  occur: the first is $\fracpart{x}$ for an $x\in\Z$, the second is building
  $\fracpartV[j-w]{\dots}$ in the region of integration in $\psi_{\eta,\cP}$.

  The latter is no problem, i.e., no discontinuity, since
  \begin{multline*}
    \int_{y \in \fracpartV[j-w]{\Phi^{-\floor{x}-J_0}\rho^x U}}
    \left( \indicator{W}{\fracpartLambda{\Phi^{j-w}y}} - \lmeas{W} \right) 
    \dd y \\
    = \int_{y \in \Phi^{-\floor{x}-J_0}\rho^x U}
    \left( \indicator{W}{\fracpartLambda{\Phi^{j-w}y}} - \lmeas{W} \right) 
    \dd y,
  \end{multline*}
  because the integral over the region
  $\floorV[j-w]{\Phi^{-\floor{x}-J_0}\rho^x U}$ is zero, see proof of
  Lemma~\ref{lem:the-zero-part}.

  Now we deal with the continuity at $x\in\Z$. Let $m \in x + p\Z$, let
  $M=\rho^m$, and consider
  \begin{equation*}
    \f{Z_\eta}{M} - \f{Z_\eta}{M-1}.
  \end{equation*}
  For an appropriate $a\in\R$ we get
  \begin{equation*}
    \f{Z_\eta}{M} 
    = a M^n \log M + M^n \f{\psi_\eta}{\log M}
    + \Oh{M^{\alpha} \log M}
    + \Oh{M^{\delta} \log M},
  \end{equation*}
  and thus
  \begin{equation*}
    \f{Z_\eta}{M} = a M^n m 
    + M^n \underbrace{\f{\psi_\eta}{m}}_{= \f{\psi_\eta}{x}}
    + \Oh{M^\alpha m}
    + \Oh{M^\delta m}.
  \end{equation*}
  Further we obtain
  \begin{multline*}
    \f{Z_\eta}{M-1} 
    = a \left(M-1\right)^n \f{\log}{M-1}
    + \left(M-1\right)^n
    \f{\psi_\eta}{\f{\log}{M-1}} \\
    + \Oh{\left(M-1\right)^{\alpha} \f{\log}{M-1}}
    + \Oh{\left(M-1\right)^{\delta} \f{\log}{M-1}},
  \end{multline*}
  and thus, using the abbreviation $L = \f{\log}{1 - M^{-1}}$ and $\delta\geq1$,
  \begin{equation*}
    \f{Z_\eta}{M-1}
    = a M^n m 
    + M^n \underbrace{\f{\psi_\eta}{m + L}}_{= \f{\psi_\eta}{x+L}}
    + \Oh{M^\alpha m}
    + \Oh{M^\delta m}.
  \end{equation*}
  Therefore we obtain
  \begin{equation*}
    \frac{\f{Z_\eta}{M} - \f{Z_\eta}{M-1}}{M^n} 
    = \f{\psi_\eta}{x} - \f{\psi_\eta}{x + L}
    + \Oh{M^{\alpha-n} m}
    + \Oh{M^{\delta-n} m}.
  \end{equation*}
  Since $\cardV{M U \setminus \left(M-1\right) U}$ is clearly an upper bound
  for the number of \wNAF{}s with values in $M U \setminus \left(M-1\right) U$
  and each of these \wNAF{}s has at most $\floor{\log M} + J_0 + 1$ digits, see
  Lemma~\ref{lem:choosing-j}, we obtain
  \begin{equation*}
    \f{Z_\eta}{M} - \f{Z_\eta}{M-1}
    \leq \cardV{M U \setminus \left(M-1\right) U}
    \left(m + J_0 + 2\right).
  \end{equation*}
  Using \itemref{enu:set-nu:difference} of Proposition~\vref{pro:set-nu} yields
  \begin{equation*}
    \f{Z_\eta}{M} - \f{Z_\eta}{M-1} = \Oh{M^\delta m}.
  \end{equation*}
  Therefore we get
  \begin{equation*}
    \f{\psi_\eta}{x} - \f{\psi_\eta}{x + L} 
    = \Oh{M^{\delta-n} m}
    + \Oh{M^{\alpha-n} m}
    + \Oh{M^{\delta-n} m}.
  \end{equation*}
  Taking the limit $m\to\infty$ in steps of $p$, and
  using $\alpha<n$ and $\delta<n$ yields
  \begin{equation*}
    \f{\psi_\eta}{x} - \lim_{\eps \to 0^-} \f{\psi_\eta}{x+\eps} = 0, 
  \end{equation*}
  i.e., $\psi_\eta$ is continuous at $x\in\Z$.
\end{proof}


\section{Counting Digits in Conjunction with Hyperelliptic Curve Cryptography}
\label{sec:thm-hyperell}


As mentioned in the introduction, we are interested in numeral systems coming
from hyperelliptic curve cryptography. There the base is an algebraic integer,
where all conjugates have the same absolute value.

Let $H$ be a hyperelliptic curve (or more generally an algebraic curve) of
genus~$g$ defined over $\mathbb{F}_q$ (a field with $q$ elements). The
Frobenius endomorphism operates on the Jacobian variety of $H$ and satisfies a
characteristic polynomial $f\in\Z[T]$ of degree~$2g$. This polynomial fulfils
the equation
\begin{equation*}
  f(T) = T^{2g} L(1/T),
\end{equation*}
where $L(T)$ denotes the numerator of the zeta-function of $H$ over
$\mathbb{F}_q$, cf.\ Weil~\cite{Weil:1948:var-ab-et-courbes-alg,
  Weil:1971:courbes-alg-et-var-ab}. The Riemann Hypothesis of the Weil
Conjectures, cf.\ Weil~\cite{Weil:1949}, Dwork~\cite{Dwork:1960} and
Deligne~\cite{Deligne:1974}, states that all zeros of $L$ have absolute value
$1/\sqrt{q}$. Therefore all roots of $f$ have absolute value $\sqrt{q}$.

Later we suppose that $\tau$ is a root of $f$, and we consider numeral systems
with a base~$\tau$. But before, we describe getting from that setting to a
lattice, which we need in Section~\ref{sec:set-up}. This is generally known and
was also used in Heuberger and Krenn~\cite{Heuberger-Krenn:2013:general-wnaf}.


First consider a number field $K$ of degree~$n$. Denote the real
embeddings of $K$ by $\sigma_1$, \ldots, $\sigma_s$ and the non-real
complex embeddings of $K$ by $\sigma_{s+1}$,
$\overline{\sigma_{s+1}}$, \ldots, $\sigma_{s+t}$,
$\overline{\sigma_{s+t}}$, where $\overline{\,\cdot\,}$ denotes
complex conjugation and $n=s+2t$. The \emph{Minkowski map}
$\Sigma\colon K \to \R^n$ maps $\alpha\in K$ to
\begin{equation*}
  \left(\sigma_1(\alpha),\ldots,\sigma_{s}(\alpha), \Re \sigma_{s+1}(\alpha),
    \Im \sigma_{s+1}(\alpha), \ldots, \Re \sigma_{s+t}(\alpha),
    \Im \sigma_{s+t}(\alpha)\right)\in\R^n.
\end{equation*}
Now let $\tau$ be an algebraic integer of degree~$n$ (as above, where $\tau$
was supposed to be a root of the characteristic polynomial~$f$ of the Frobenius
endomorphism) and such that all its conjugates have the same absolute
value~$\rho>1$. Note that the absolute value of the field norm of~$\tau$
equals~$\rho^n$. Set $K=\f{\Q}{\tau}$ and consider the order $\Z[\tau]$. We get
a lattice $\Lambda=\f{\Sigma}{\Z[\tau]}$ of degree~$n$ in the
space~$\R^n$. Application of the map $\Phi\colon\Lambda\to\Lambda$ on a lattice
element should correspond to the multiplication by~$\tau$ in the order, so we
define~$\Phi$ as block diagonal matrix by
\begin{equation*}
  \Phi \colonequals 
  \diag{\sigma_1(\tau), \ldots, \sigma_s(\tau), 
    {\footnotesize
    \begin{pmatrix}
      \Re \sigma_{s+1}(\tau)&-\Im\sigma_{s+1}(\tau)\\
      \Im\sigma_{s+1}(\tau)&\Re \sigma_{s+1}(\tau)
    \end{pmatrix}},
    \ldots,
    {\footnotesize
    \begin{pmatrix}
      \Re \sigma_{s+t}(\tau)&-\Im\sigma_{s+t}(\tau)\\
      \Im\sigma_{s+t}(\tau)&\Re \sigma_{s+t}(\tau)
    \end{pmatrix}}}.
\end{equation*}
The eigenvalues of~$\Phi$ are exactly the conjugates of~$\tau$,
therefore all eigenvalues have absolute value~$\rho$. For the norm
$\norm{\,\cdot\,}$ we choose the Euclidean norm
$\norm{\,\cdot\,}_2$. Then the corresponding operator norm fulfils
\begin{equation*}
  \norm{\Phi} 
  = \max\set*{\abs{\f{\sigma_j}{\tau}}}{j\in\set{1,2,\dots,s+t}}
  = \rho.
\end{equation*}
In the same way we get $\norm{\Phi^{-1}} = \rho^{-1}$. 

Now let $T\subseteq\R^n$ be a set which tiles the~$\R^n$ by the
lattice~$\Lambda$, choose~$w$ as in the set-up in Section~\ref{sec:set-up}, and
let $\cD$ be a reduced residue digit set modulo~$\Phi^w$ corresponding to the
tiling, cf.\ also Heuberger and
Krenn~\cite{Heuberger-Krenn:2013:general-wnaf}. Since our lattice~$\Lambda$
comes from the order~$\Z[\tau]$ and our map~$\Phi$ corresponds to the
multiplication by~$\tau$ map, the size of the digit set~$\cD$ is $\rho^{n(w-1)}
\left(\rho^n-1\right) + 1$, see~\cite{Heuberger-Krenn:2012:wnaf-analysis} for details.

Since our set-up, see Section~\ref{sec:set-up}, is now complete, we get that
Theorem~\ref{thm:countdigits} holds. We want to restate this for our special
case of $\tau$\nbd-adic \wNAF{}-expansions. This is done in
Corollary~\ref{cor:count-tau-adic}. To prove periodicity of the
function~$\psi_\eta$ in that corollary, we need the following lemma.


\begin{lemma}\label{lem:cond-periodic-hyperll}
  Suppose
  \begin{equation*}
    \Phi = Q \diag{\rho e^{i\theta_1}, \dots, \rho e^{i\theta_n}} Q^{-1},
  \end{equation*}
  where $Q$ is a regular matrix and let $U=\ball{0}{1}$ be the unit ball. Then
  \begin{equation*}
    Q \diag{e^{i\theta_1}, \dots, e^{i\theta_n}} Q^{-1} U = U.
  \end{equation*}
\end{lemma}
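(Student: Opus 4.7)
The plan is to observe that the matrix $M \colonequals Q \diag{e^{i\theta_1}, \dots, e^{i\theta_n}} Q^{-1}$ is nothing but $\rho^{-1}\Phi$, and then to show that $M$ is an isometry with respect to the norm $\norm{\,\cdot\,}$ for which $U = \ball{0}{1}$ is defined.

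First I would factor: from the given diagonalisation of $\Phi$ we have
\begin{equation*}
  \Phi = \rho \cdot Q \diag{e^{i\theta_1}, \dots, e^{i\theta_n}} Q^{-1} = \rho M,
\end{equation*}
so $M = \rho^{-1}\Phi$ and $M^{-1} = \rho\, \Phi^{-1}$. Using the assumptions $\norm{\Phi} = \rho$ and $\norm{\Phi^{-1}} = \rho^{-1}$ from the set-up, this yields $\norm{M} = 1$ and $\norm{M^{-1}} = 1$.

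Next I would upgrade these operator-norm bounds to an honest isometry. For every $x \in \R^n$,
\begin{equation*}
  \norm{x} = \norm{M^{-1} M x} \leq \norm{M^{-1}} \cdot \norm{M x} = \norm{M x} \leq \norm{M} \cdot \norm{x} = \norm{x},
\end{equation*}
so equality holds throughout and $\norm{M x} = \norm{x}$. Hence $M$ is a norm-preserving bijection of $\R^n$, and therefore maps the open unit ball $U = \ball{0}{1}$ bijectively onto itself, giving $M U = U$ as claimed.

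There is essentially no obstacle here; the whole content is the observation that $M = \rho^{-1}\Phi$, after which the norm conditions from the set-up of Section~\ref{sec:set-up} give the isometry property for free. (In the concrete hyperelliptic set-up one can alternatively verify this directly: in the Minkowski coordinates each block of $\rho^{-1}\Phi$ is either $\pm 1$ or a planar rotation, so $\rho^{-1}\Phi$ is orthogonal and hence preserves the Euclidean unit ball.)
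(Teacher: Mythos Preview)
Your proof is correct and in fact slightly more general than the paper's. The paper argues that in the Minkowski set-up of Section~\ref{sec:thm-hyperell} the matrix $\Phi$ is normal, hence $M = Q\diag{e^{i\theta_1},\dots,e^{i\theta_n}}Q^{-1}$ is unitary and therefore preserves the Euclidean unit ball. You instead use only the abstract hypotheses $\norm{\Phi}=\rho$ and $\norm{\Phi^{-1}}=\rho^{-1}$ from the general set-up in Section~\ref{sec:set-up} to conclude that $M=\rho^{-1}\Phi$ is an isometry for the chosen norm, which immediately gives $MU=U$. The underlying idea---$M$ is norm-preserving---is the same, but your route avoids appealing to the special block-diagonal structure of $\Phi$ in the hyperelliptic case and would apply verbatim to any $\Phi$ and norm satisfying the conditions of Section~\ref{sec:set-up}; the paper's one-line argument trades that generality for brevity by invoking normality directly. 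Your parenthetical remark about the orthogonality of $\rho^{-1}\Phi$ in Minkowski coordinates is essentially the paper's argument made explicit.
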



\begin{proof}
  Since $\Phi$ is normal, the matrix $Q \diag{e^{i\theta_1}, \dots,
    e^{i\theta_n}} Q^{-1}$ is unitary. Therefore balls are mapped to balls
  bijectively, which was to be proved.
\end{proof}


Now, as mentioned above, we reformulate Theorem~\ref{thm:countdigits} for our
$\tau$\nbd-adic set-up. This gives the following corollary.


\begin{corollary}\label{cor:count-tau-adic}
  Let $\tau$ be an algebraic integer, where all conjugates have the same
  absolute value, denote the embeddings of $\Q(\tau)$ by $\sigma_1$, \dots,
  $\sigma_{s+t}$ as above, and define a norm by $\norm{z}^2 = \sum_{i=1}^{s+t}
  d_i \abs{\f{\sigma_i}{z}}^2$ with $d_1=\dots=d_s=1$ and
  $d_{s+1}=\dots=d_{s+t}=2$. 

  Let $0 \neq \eta \in \cD$ and $N\in\R$ with $N>0$. We denote the number of
  occurrences of the digit $\eta$ in all width\nbd-$w$ non-adjacent forms in
  $\Ztau$, where the norm of its value is smaller than $N$, by
  \begin{equation*}
    \f{Z_\eta}{N}
    = \sum_{\substack{z \in \Ztau \\ \norm{z}<N}} \sum_{j\in\N_0} 
    \iverson*{$j$th digit of $z$ in its \wNAF{}-expansion equals $\eta$}.
  \end{equation*}
  Then we get  
  \begin{equation*}
      \f{Z_\eta}{N} 
      = N^n \frac{\pi^{n/2}}{\f{\Gamma}{\frac{n}{2}+1}} E \log_\rho N
      + N^n \f{\psi_\eta}{\log_\rho N}
      + \Oh{N^\beta \log_\rho N},
  \end{equation*}
  where we have the constant of the expectation
  \begin{equation*}
    E = \frac{1}{\rho^{n(w-1)}((\rho^n-1)w+1)},    
  \end{equation*}
  cf.\ Theorem~\vref{th:w-naf-distribution}, a function $\f{\psi_\eta}{x}$
  which is \periodic{1} and continuous and $\beta<n$.
\end{corollary}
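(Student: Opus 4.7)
The plan is to translate the $\tau$-adic set-up into the lattice framework of Section~\ref{sec:set-up} by means of a rescaled Minkowski map, so that the weighted norm $\norm{\cdot}$ appearing in the statement coincides with the Euclidean norm on $\R^n$. Once we are inside Theorem~\ref{thm:countdigits} with $U=\ball{0}{1}$, Lemma~\ref{lem:cond-periodic-hyperll} will deliver the periodicity of $\psi_\eta$, and continuity then follows from the final clause of Theorem~\ref{thm:countdigits}.

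First I would construct the embedding. Define $\Sigma'\colon\Q(\tau)\to\R^n$ by keeping the coordinates $\sigma_1(\alpha),\dots,\sigma_s(\alpha)$ unchanged and by replacing each pair $(\Re\sigma_{s+j}(\alpha),\Im\sigma_{s+j}(\alpha))$ with $(\sqrt{2}\,\Re\sigma_{s+j}(\alpha),\sqrt{2}\,\Im\sigma_{s+j}(\alpha))$. The $\sqrt{2}$-factors are precisely the weights $d_i$ appearing in $\norm{\cdot}$, so the Euclidean norm on $\R^n$ pulls back to $\norm{\cdot}$ on $\Q(\tau)$. The image $\Lambda\colonequals\f{\Sigma'}{\Z[\tau]}$ is a full-rank lattice, and multiplication by $\tau$ induces an endomorphism $\Phi$ whose block-diagonal representation is identical to the one given in the discussion preceding Lemma~\ref{lem:cond-periodic-hyperll}; the uniform $\sqrt{2}$-scaling commutes with each $2\times 2$ complex block. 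Consequently all eigenvalues of $\Phi$ still have absolute value~$\rho$, and the induced Euclidean operator norm satisfies $\norm{\Phi}=\rho$ and $\norm{\Phi^{-1}}=\rho^{-1}$. Choosing a tiling~$T$, a suitable~$w$, and a digit set~$\cD$ as in that discussion places us squarely in the hypotheses of Section~\ref{sec:set-up}.

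Next I would apply Theorem~\ref{thm:countdigits} with $U=\ball{0}{1}$. Under $\Sigma'$, the set $\set*{z\in\Z[\tau]}{\norm{z}<N}$ maps bijectively to $NU\cap\Lambda$, so the defining sum for $Z_\eta$ matches the one in the theorem. The Lebesgue measure of the Euclidean unit ball is $\lmeas{U}=\pi^{n/2}/\f{\Gamma}{\tfrac{n}{2}+1}$, which immediately yields the stated main term. For the boundary hypothesis I would verify $\cardV{\boundaryV{NU}}=\Oh{N^{n-1}}$: the cells $T_z$ have uniformly bounded diameter, so only those meeting a tubular neighbourhood of bounded width around the sphere of radius~$N$ contribute, and this neighbourhood has $(n-1)$-dimensional measure $\Oh{N^{n-1}}$. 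Hence $\delta=n-1$ is admissible, and the error terms $\Oh{N^\alpha\log_\rho N}$ and $\Oh{N^\delta\log_\rho N}$ combine into $\Oh{N^\beta\log_\rho N}$ with $\beta=\max(\alpha,n-1)<n$, using $\alpha=n+\log_\rho\mu<n$.

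Finally, periodicity and continuity follow from the structural assumption on~$U$. Writing $\Phi=Q\diag{\rho e^{i\theta_1},\dots,\rho e^{i\theta_n}}Q^{-1}$, Lemma~\ref{lem:cond-periodic-hyperll} shows that $Q\diag{e^{i\theta_1},\dots,e^{i\theta_n}}Q^{-1}$ is unitary and therefore preserves $U=\ball{0}{1}$, so the periodicity hypothesis of Theorem~\ref{thm:countdigits} is satisfied with $p=1$. Thus $\psi_\eta$ is \periodic{1}, and the last assertion of Theorem~\ref{thm:countdigits} then forces $\psi_\eta$ to be continuous. The one mildly delicate point, and the main obstacle, is in the first step: checking that the $\sqrt{2}$-rescaling of complex coordinates really does identify the weighted norm with the Euclidean norm while leaving both the block structure of~$\Phi$ and the relations $\norm{\Phi}=\rho$, $\norm{\Phi^{-1}}=\rho^{-1}$ intact. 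Once this is in place, the remainder is a direct transcription of Theorem~\ref{thm:countdigits} and Lemma~\ref{lem:cond-periodic-hyperll}.
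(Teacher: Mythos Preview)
Your proposal is correct and follows essentially the same route as the paper: choose $U=\ball{0}{1}$, plug into Theorem~\ref{thm:countdigits} with $\delta=n-1$, invoke Lemma~\ref{lem:cond-periodic-hyperll} for periodicity with $p=1$, and set $\beta=\max(\alpha,n-1)$. Your explicit $\sqrt{2}$-rescaling of the complex coordinates to reconcile the weighted norm $\norm{z}^2=\sum d_i\abs{\sigma_i(z)}^2$ with the Euclidean norm on $\R^n$ is a detail the paper's short proof passes over in silence, but it is indeed needed and your treatment of it is sound.
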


\begin{proof}
  We choose $U = \ball{0}{1}$ the unit ball in the $\R^n$. Then $U$ is
  measurable, $d=1$ and $\delta = n-1 < n$. Further the $n$\nbd-dimensional
  Lebesgue measure of~$U$ equals $\frac{\pi^{n/2}}{\f{\Gamma}{\frac{n}{2}+1}}$.
  The condition $\cardV{\boundaryV{NU}} = \Oh{N^\delta}$ can be checked
  easily. In the case of a quadratic~$\tau$ this is done
  in~\cite{Heuberger-Krenn:2012:wnaf-analysis}. The periodicity (and therefore
  continuity) of $\psi_\eta$ follows from
  Lemma~\ref{lem:cond-periodic-hyperll}. We can choose
  $\beta=\max\set{\alpha,n-1}$.
\end{proof}

\let\varhexagon\origvarhexagon


\renewcommand{\MR}[1]{}

\bibliographystyle{amsplain}
\bibliography{cheub}


\end{document}

